\titleformat{\section}{\normalfont\scshape\centering}{\thesection}{1em}{}
\titleformat{\subsection}{\bfseries}{\thesubsection}{1em}{}
\subjclass[2010]{11A41, 11R44, 11R42 }
\newtheorem{theorem}{Theorem}[section]
\newtheorem{corollary}[theorem]{Corollary}
\newtheorem{hypothesis}[theorem]{Hypothesis}
\newtheorem{question}[theorem]{Question}
\newtheorem{lemma}[theorem]{Lemma}
\newtheorem{proposition}[theorem]{Proposition}
\theoremstyle{definition}
\newtheorem{definition}[theorem]{Definition}
\newtheorem{remark}[theorem]{Remark}
\numberwithin{equation}{section}
\newcommand{\ord}{\textup{ord}}
\newcommand{\Gal}{\textup{Gal}}
\newcommand{\Li}{\textup{Li}}
\newcommand{\lcm}{\textup{lcm}}
\newcommand\blfootnote[1]{%
  \begin{NoHyper}%
  \renewcommand\thefootnote{}\footnote{#1}%
  \addtocounter{footnote}{-1}%
  \end{NoHyper}%
}
\author[Olli J\"arviniemi]{Olli J\"arviniemi}
\address{Department of Mathematics and Statistics, University of Turku, 20014 Turku, Finland}
\email{olli.a.jarviniemi@utu.fi}
\author[Joni Ter\"{a}v\"{a}inen]{Joni Ter\"{a}v\"{a}inen}
\address{Department of Mathematics and Statistics, University of Turku, 20014 Turku, Finland}
\email{joni.p.teravainen@gmail.com}
\title{Composite values of shifted exponentials}
\date{}
\begin{document}
\begin{abstract}
A well-known open problem asks to show that $2^n+5$ is composite for almost all values of $n$. This was proposed by Gil Kalai as a possible Polymath project, and was first posed by Christopher Hooley. We settle this problem assuming GRH and a form of the pair correlation conjecture. We in fact do not need the full power of the pair correlation conjecture, and it suffices to assume an inequality of Brun--Titchmarsh type in number fields  that is implied by the pair correlation conjecture.  Our methods apply in fact to any shifted exponential sequence of the form $a^n-b$ and show that, under the same assumptions, such numbers are $k$-almost primes for a density $0$ of natural numbers $n$. Furthermore, under the same assumptions we show that $a^p-b$ is composite for almost all primes $p$ whenever $(a, b) \neq (2, 1)$.
\end{abstract}

\maketitle

\section{Introduction}
\label{sec:intro}

\blfootnote{Keywords: Exponential sequences, Chebotarev density theorem, sieve methods.}It is a notorious open question to determine whether a shifted exponential sequence $a^n-b$ for given $a>1$ and $b\in \mathbb{Z}\setminus\{0\}$ produces infinitely many primes as $n$ ranges over the positive integers $\mathbb{N}$. One expects that this should be the case whenever there is no obvious reason for $a^n-b$ to be composite for all large $n$, the obvious reasons being that $a^n-b$ either has a fixed prime divisor or that it factors as the result of a polynomial identity. This leads to the following question.

\begin{question}\label{conj1} Let $a>1$ and $b\neq 0,-1$ be integers. Assume that for every $q\geq 1$ there exists $1\leq r\leq q$ such that:
\begin{itemize}
\item [(i)] The sequence $a^{qn+r}-b$ has no fixed prime divisor;
\item [(ii)] There is no $m\geq 2$, $m\mid q$ such that $ba^{-r}$ is an $m$th power of a rational number, and further if $4\mid q$ there is no $c\in \mathbb{Q}$ such that $ba^{-r}=-4c^4$);
\end{itemize}

Then does the sequence $a^n-b$ contain infinitely many primes?
\end{question}
Here case (ii) corresponds to the well-known fact that the binomial $x^n-d\in \mathbb{Q}[x]$ is reducible if and only if $d$ is an $m$th power of a rational number for some $m\geq 2$, $m\mid n$, or $4\mid n$ and $d=-4c^4$ for some rational $c$. The exclusion of $b=-1$ stems from the fact that for $b=-1$ one easily sees that the only possible primes in the sequence are of the form $a^{2^m}+1$, and as is discussed below, probabilistic heuristics suggest that there are only finitely many such primes (even though (i) and (ii) hold for $(a,b)=(2,-1)$, say). See Section~\ref{sec:necessary} for further discussion on the necessity of the conditions.

Question~\ref{conj1} is closely connected with two conjectures that are among the oldest in number theory: the existence of Mersenne primes and Fermat primes. Mersenne primes are primes of the form $2^{p}-1$ with $p$ a prime; one easily sees  that any prime of the form $2^n-1$ must be of this form. Many of the largest known primes are Mersenne primes. Fermat primes are primes in the sequence $2^{2^m}+1$; again, all the primes of the form $2^n+1$ are easily seen to have this shape. The first five Fermat numbers ($m=0,1,2,3,4$) are all prime, but extensive numerical searches have produced no further Fermat primes. A widely-believed conjecture (supported by probabilistic arguments; see~\cite[Problem A3]{guy}) is that there are infinitely many Mersenne primes but only finitely many Fermat primes. These two assertions seem to be well beyond reach of all known methods.

Although little is known about the primality of $2^n\pm 1$, it is noteworthy that $2^n\pm 1$ can be prime only for a set of integers $n$ of natural density $0$; this follows immediately from the form that the exponents of the Mersenne and Fermat primes take. In light of this, one conjectures more generally that $a^n-b$ is prime for a natural density $0$ of natural numbers $n$. Even this is still an open problem. The problem, in the concrete special case of the sequence $2^n+5$, was suggested by Gil Kalai in~\cite{kalai} as a possible Polymath project (with the comment that it ``might be too hard''). The problem was originally studied by Christopher Hooley in his book~\cite{hooleybook},  and was popularized by Peter Sarnak\footnote{Sarnak remarks in~\cite{sarnak} that ``Even a problem like $2^n+5$ being
composite for almost all $n$ is very problematic (Hooley).''}~\cite{sarnak}. 

In the discussion in~\cite{kalai}, it was suggested that in order to make any progress on the compositeness of $2^n+5$ one should assume GRH.  Our first main result confirms that $2^n+5$ is indeed composite for almost all $n$, as well as the analogous result for general shifted exponential sequences $a^n-b$, assuming GRH and an additional Brun-Titchmarsh type inequality for primes satisfying conditions of form ``$p \equiv 1 \pmod{m}$, $c$ is a perfect $m'$th power modulo $p$''. We postpone detailed discussion of the hypotheses to Subsection~\ref{subsec:conjecture}.

\begin{hypothesis}[A Brun--Titchmarsh estimate on average]
\label{hyp}
Let $a$ and $b$ be fixed coprime integers with $|a|, |b| > 1$. The following holds for any small enough $0<\epsilon'<\epsilon$:

Let $m$ be a positive integer and let $m'$ be a prime with $m'\mid m$ and $m' \in [(\log m)^{\epsilon'}, m^{\epsilon}]$. Then, uniformly for $y \ge m\exp((\log m)^{1/2})$,  we have
\begin{align}\label{e1}\begin{split}
&\sum_{r=0}^{m'-1}|\{p \le y : p \equiv 1 \pmod{m}, ba^{-r} \text{ is perfect } m'\text{th power mod } p\}|^2 \\
&\ll_{\epsilon, \epsilon'} \frac{y^2}{\phi(m)^2(\log y)^2}\max\left\{\frac{1}{(\log y)^2},\frac{1}{(m')^{\epsilon}}\right\}.
\end{split}
\end{align}
\end{hypothesis}

We then state our main results.

\begin{theorem}
\label{thm:main}
Assume GRH and Hypothesis~\ref{hyp}. Let $a>1$ and $b$ be integers. The natural density of positive integers $n$ such that $a^n - b$ is composite is $1$.
\end{theorem}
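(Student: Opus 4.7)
The plan is to produce, for almost every $n$, a prime divisor of $a^n-b$ much smaller than $a^n - b$ itself. Writing $P(X) = \prod_{p \leq X} p$, the goal is
\begin{equation*}
\lim_{X \to \infty} \overline{d}\{n \in \mathbb{N} : \gcd(a^n - b, P(X)) = 1\} = 0,
\end{equation*}
where $\overline{d}$ denotes upper natural density. Any $n$ outside the set in question, for which $a^n - b > X$ (which excludes only $O(\log X)$ values in $[1,N]$), has a prime factor strictly smaller than $a^n - b$, so $a^n - b$ is composite.

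For each squarefree $m$ coprime to $ab$, the set $\{n : m \mid a^n - b\}$ is an arithmetic progression of density $1/\ord_m(a)$ when $b \in \langle a \rangle \bmod m$, and is empty otherwise. Setting $\rho(m) := \mathbf{1}[b \in \langle a \rangle \bmod m]/\ord_m(a)$, inclusion-exclusion gives
\begin{equation*}
\overline{d}\{n : \gcd(a^n - b, P(X)) = 1\} = \sum_{m \mid P(X)} \mu(m)\rho(m).
\end{equation*}
Because $\ord_m(a) = \lcm_{p \mid m}\ord_p(a)$, the orders roughly decorrelate across distinct primes, and this sum is heuristically close to the Euler product $\prod_{p \leq X}(1 - \rho(p))$. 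The argument then splits into (a) showing this product tends to $0$, and (b) justifying the passage from the inclusion-exclusion sum to the product rigorously.

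For (a), GRH combined with Chebotarev in the radical extensions $\mathbb{Q}(\zeta_\ell, a^{1/\ell}, b^{1/\ell})$ ensures a positive proportion of primes $p$ satisfy $b \in \langle a \rangle \bmod p$, and for such primes $\rho(p) \geq 1/(p-1)$; thus $\sum_{p \leq X}\rho(p) \gg \log\log X$ and the Euler product vanishes in the limit. The main obstacle is (b). Truncating and controlling the inclusion-exclusion requires uniform estimates of the shape
\begin{equation*}
\#\{p \leq y : \ell \mid \ord_p(a)\} \ll \frac{y}{\ell\,\varphi(\ell)\,\log(y/(\ell\varphi(\ell)))}
\end{equation*}
for $\ell$ in a range far beyond the $\ell \leq y^{1/2 - o(1)}$ accessible via effective Chebotarev under GRH alone. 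Such bounds are precisely Brun--Titchmarsh inequalities for the Kummer Chebotarev condition in $\mathbb{Q}(\zeta_\ell, a^{1/\ell})$ and, as flagged in the introduction, follow from the pair correlation conjecture. Given these estimates, a Rankin-style truncation of the sieve sum, together with a verification that the ``correlated'' terms (those with $\ord_m(a) \ll \prod_{p \mid m}\ord_p(a)$, or where the global condition $b \in \langle a \rangle \bmod m$ fails despite its prime-by-prime analogue holding) contribute $o(1)$, completes the proof.
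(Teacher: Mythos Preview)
Your setup is reasonable, but the proposal hand-waves exactly the step that constitutes the entire proof. The passage from the inclusion--exclusion sum to the Euler product in (b) is equivalent to showing that the events $A_p = \{n : p \mid a^n - b\}$ are approximately pairwise independent; you acknowledge the ``correlated terms'' (those $m=pq$ with $(\ord_p(a),\ord_q(a))=g>1$, so that $\rho(pq)$ differs from $\rho(p)\rho(q)$ by a factor of $g$ or $0$) and then assert that a Rankin truncation plus ``a verification'' handles them. That verification is the whole problem: whether the correlated terms contribute $o(1)$ depends on whether, for $p,q$ with $g\mid \ord_p(a),\ord_q(a)$, the residues $\ell(p),\ell(q)\pmod g$ are equidistributed, and the paper explicitly notes (with examples such as $2^x\equiv 9\pmod p$ forcing $x$ even when $p\equiv\pm3\pmod8$) that this equidistribution can \emph{fail}. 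One therefore has to restrict to a carefully constructed positive-density subset $S$ of primes on which $\ell(p)\pmod m$ really is equidistributed for each fixed $m$; this construction (an adaptation of Lenstra's method) occupies an entire section and is invisible in your sketch.

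Moreover, the Brun--Titchmarsh input you identify, a bound on $\#\{p\le y:\ell\mid\ord_p(a)\}$, is not the right object. What is actually needed (and what PCC supplies via the Murty--Murty--Wong mean-square Chebotarev estimate) is control over $\#\{p\le y:p\equiv1\pmod m,\ \ell(p)\equiv r\pmod{m'}\}$, i.e.\ primes splitting in $\mathbb{Q}(\zeta_m,(ba^{-r})^{1/m'})$, summed in $\ell^2$ over $r$. This involves the \emph{value} of the discrete logarithm $\ell(p)$ modulo $m'$, not merely divisibility of $\ord_p(a)$; the latter says nothing about whether $\ell(p)\equiv\ell(q)\pmod g$. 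The paper's argument uses the second moment method rather than full inclusion--exclusion precisely because only pairwise correlations $\Pr(A_p\cap A_q)$ need to be controlled, and even these require splitting into four regimes of $g=(\ord_p(a),\ord_q(a))$ treated by separate tools (equidistribution of $S$, effective Chebotarev, Selberg's sieve, and finally PCC). Your proposal does not engage with any of this structure.
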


\begin{remark}
Hooley had shown in~\cite{hooleybook} that $2^n-b$ is composite for almost all $n$ assuming GRH and an essentially self-serving hypothesis. Our task in this paper is to remove this self-serving hypothesis and replace it with a more natural one.
\end{remark}

\begin{remark}
One motivation for Sarnak to popularize Hooley's problem was that Bourgain, Gamburd and Sarnak~\cite[Theorem 3]{bgs-markoff} managed to settle the analogous problem for the Markoff numbers. Markoff triples are defined as the solutions $(x_1,x_2,x_3)\in \mathbb{N}^3$ to the Diophantine equation $x_1^2+x_2^2+x_3^2=3x_1x_2x_3$, and the  Markoff numbers are the increasing sequence formed by the largest coordinates of Markoff triples $(x_1,x_2,x_3)$ (with multiplicities). By the result of~\cite{bgs-markoff} almost all Markoff numbers are composite, and their number up to $X$ is $\asymp (\log X)^2$. Therefore, the sequences $a^n-b$ are even sparser.

Sarnak~\cite{sarnak} also connected Hooley's problem to the affine sieve developed in~\cite{salehi-sarnak},~\cite{bgs}; the affine sieve theorem of Salehi Golsefidy and Sarnak~\cite{salehi-sarnak} applies to counting almost primes lying in an orbit of a group of affine linear transformations under the assumption that the Zariski closure of the group is \emph{Levi-semisimple}. The authors of~\cite{salehi-sarnak} present a heuristic argument for the necessity of the Levi-semisimplicity condition, which boils down to understanding almost primality questions for shifted exponential functions, such as the one above. 
\end{remark}

In addition to showing compositeness of $a^n-b$ for almost all $n$, we are more generally able to show that $a^n-b$ is not a $k$-almost prime (a number with at most $k$ prime factors) for a density $1$ set of natural numbers $n$. We prove this in the following strong sense.

\begin{theorem}
\label{thm:almostprime}
Assume GRH and Hypothesis~\ref{hyp}. Let $a>1$ and $b\neq 0$ be coprime integers. Then there exists a constant $c=c_{a,b}>0$ such that the natural density of positive integers $n$ such that $\omega(a^n - b)\geq c \log \log n$ is $1$.
\end{theorem}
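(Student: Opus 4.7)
My plan is to deduce Theorem \ref{thm:almostprime} via a Tur\'an--Kubilius style second moment argument restricted to small prime divisors of $a^n-b$. Fix a large parameter $N$ and a small $\eps > 0$, set $y := N^\eps$, and define
\[
f(n) := \#\{\, p \le y : p \nmid ab,\ p\mid a^n-b\,\},
\]
so that $f(n) \le \omega(a^n - b)$. For each prime $p \le y$ coprime to $ab$, the set $\{n : p\mid a^n-b\}$ is either empty or a single arithmetic progression modulo $\ord_p(a)$; denote by $\epsilon_p \in \{0,1\}$ its nonemptiness indicator, equal to $1$ precisely when $b$ lies in the cyclic subgroup $\langle a\rangle \le (\mathbb{Z}/p\mathbb{Z})^*$. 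Since $\ord_p(a) \le p-1 \le y$, a routine count yields
\[
\mathbb{E}_{n\le N} f(n) = \sum_{p\le y} \frac{\epsilon_p}{\ord_p(a)} + O(N^{-1/2}).
\]
By Hooley's GRH-conditional proof of Artin's primitive root conjecture, a positive-density set of primes $p$ satisfies $\ord_p(a) = p-1$ (which forces $\epsilon_p = 1$), and restricting the sum to such primes yields $\mathbb{E} f \gg \sum_{p\le y} 1/p \asymp \log\log N$.

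The central task is a variance estimate $\mathrm{Var}(f) = o((\log\log N)^2)$, after which Chebyshev's inequality gives $f(n) \ge \tfrac12 \mathbb{E} f \gg \log\log n$ for all but $o(N)$ values of $n\le N$, and hence the theorem. By the Chinese Remainder Theorem, for distinct primes $p,q\le y$ both coprime to $ab$ the density of $n$ with $pq \mid a^n-b$ equals $\epsilon_p\epsilon_q/\lcm(\ord_p(a), \ord_q(a))$. Expanding $\gcd(m,n) - 1 = \sum_{d\ge 2,\, d\mid\gcd(m,n)}\phi(d)$ in the resulting covariance bound gives
\[
\mathrm{Var}(f) \ll \mathbb{E}f + \sum_{d \ge 2}\phi(d)\, T(d)^2,\qquad T(d) := \sum_{\substack{p\le y \\ d\mid \ord_p(a)}} \frac{1}{\ord_p(a)}.
\]

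The sole genuinely nontrivial step is bounding $\sum_{d\ge 2}\phi(d) T(d)^2$ strictly below $(\log\log N)^2$. The event $d\mid\ord_p(a)$ is a Chebotarev condition on the Kummer--cyclotomic extension $\mathbb{Q}(\zeta_d, a^{1/d})/\mathbb{Q}$, and the naive Brun--Titchmarsh input is insufficient: since orders $\ord_p(a)$ are structurally correlated (e.g.\ typically even), the classical Dirichlet-type density $\sim 1/\phi(d)$ of primes with $d\mid\ord_p(a)$ yields $T(d) \asymp (\log\log y)/\phi(d)$ and a spurious logarithmic blow-up upon summation. This is precisely where Conjecture \ref{hyp} enters: it supplies a stronger upper bound for the corresponding Chebotarev counting function with a quantitative saving in $d$, which after a dyadic decomposition over the size of $\ord_p(a)$ and partial summation converts to a bound of the form $\sum_{d\ge 2}\phi(d) T(d)^2 \ll (\log\log N)^{2-\eta}$ for some $\eta > 0$. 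Chebyshev's inequality then finishes the proof, yielding $\omega(a^n-b) \ge c\log\log n$ on a density-$1$ set of $n$ with $c = c_{a,b}$. The principal obstacle is therefore this Chebotarev upper bound with a saving in $d$; it is precisely what Hooley's original treatment lacked, forcing him to impose a separate self-serving hypothesis on order statistics in his conditional compositeness result.
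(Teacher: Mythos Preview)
Your approach has a genuine gap at the variance step. The claim that the density of $n$ with $pq\mid a^n-b$ equals $\epsilon_p\epsilon_q/\lcm(\ord_p(a),\ord_q(a))$ is already false: the two residue classes $n\equiv \ell(p)\pmod{\ord_p(a)}$ and $n\equiv \ell(q)\pmod{\ord_q(a)}$ intersect only when $\ell(p)\equiv \ell(q)\pmod{m}$ where $m=\gcd(\ord_p(a),\ord_q(a))$, and otherwise the density is zero. Treating your formula as an upper bound does give
\[
\mathrm{Var}(f)\ \le\ \mathbb{E}f\ +\ \sum_{d\ge 2}\phi(d)\,T(d)^2,\qquad T(d)=\sum_{\substack{p\le y\\ d\mid \ord_p(a)}}\frac{1}{\ord_p(a)},
\]
but this bound is far too weak to close the argument. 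Indeed, for each fixed $d$ a positive proportion (of density $\asymp 1/\phi(d)$) of primes $p$ satisfy $d\mid \ord_p(a)$, so $T(d)\asymp (\log\log y)/\phi(d)$ and hence
\[
\sum_{2\le d\le y}\phi(d)\,T(d)^2\ \asymp\ (\log\log y)^2\sum_{2\le d\le y}\frac{1}{\phi(d)}\ \asymp\ (\log\log y)^2\log y,
\]
which is not $o((\log\log N)^2)$. Already the single term $d=2$ contributes a fixed positive multiple of $(\log\log y)^2$, since a positive proportion of primes have even order. No Chebotarev saving can repair this: $T(d)$ does not involve $\ell(p)$ at all, whereas Conjecture~\ref{hyp} is a statement about the joint distribution of $p\pmod m$ \emph{and} $\ell(p)\pmod{m'}$ (via splitting in $\mathbb{Q}(\zeta_m,(ba^{-r})^{1/m'})$). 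It simply has nothing to say about $T(d)$.

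What you have discarded is precisely the mechanism the paper exploits. The covariance of $1_{A_p}$ and $1_{A_q}$ is
\[
\frac{m\cdot 1_{m\mid \ell(p)-\ell(q)}-1}{\ord_p(a)\ord_q(a)},
\]
and the point is that, over a carefully constructed positive-density set $S$ of primes (Section~\ref{sec:equiLog}), the residues $\ell(p)\pmod m$ are \emph{equidistributed}, so on average the indicator is $1/m$ and the covariance is genuinely small rather than merely bounded by $(\gcd-1)/(\ord_p(a)\ord_q(a))$. The paper then splits the second moment according to the size of $m=\gcd$: small $m$ is handled by this equidistribution (Lemma~\ref{lem:denInv}), and large $m$ is where Conjecture~\ref{hyp} enters, bounding $\sum_r \bigl(\sum_{p:\,\ell(p)\equiv r\,(m')}1\bigr)^2$. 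Once this is in place, Lemma~\ref{lem:2moment} gives not just $\Pr(\bigcup A_p)\to 1$ but concentration of $\sum_{p\in S_{\le \sqrt{x}}}1_{A_p}(n)$ around its mean $(c+o(1))\log\log x$, and Theorem~\ref{thm:almostprime} follows immediately. To fix your argument you must retain the compatibility condition $m\mid \ell(p)-\ell(q)$ throughout; dropping it at the outset loses the cancellation that makes the second moment method succeed.
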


Here, as usual, $\omega(n)$ denotes the number of distinct prime factors of $n$.

We will in fact prove that the number of prime divisors of $a^n - b$ which are less than $\sqrt{n}$ is almost always $\gg \log \log n$. On the other hand, $a^n-b$ should of course have a lot of prime factors $p>\sqrt{n}$ as well, but our method does not apply to detecting these large factors.

One naturally wonders whether one can say something about the compositeness of $a^n-b$ even when $n$ is restricted to an interesting subset of natural numbers, particularly the primes. In the case of Mersenne primes, which are of the form $2^p-1$ with $p$ prime, it appears unknown that there are even infinitely many \emph{composite} numbers in this sequence. Nevertheless, we are able to apply our methods also to the sequence $a^p-b$ with prime exponents, as long as we are not in the case $a=2$, $b=1$ that corresponds to Mersenne primes. 

\begin{theorem}
\label{thm:primeexp}
Assume GRH and Hypothesis~\ref{hyp}. Let $a>1$ and $b$ be integers with $(a,b)\neq (2,1)$. The relative density of primes $p$ for which $a^p-b$ is composite is $1$.
\end{theorem}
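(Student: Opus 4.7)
The approach parallels the proof of Theorem \ref{thm:main}, adapted to prime exponents. Call a rational prime $\ell \nmid ab$ \emph{admissible} if $b \in \langle a \rangle \subset \mathbb{F}_\ell^\times$ and the discrete logarithm $s_\ell := \log_a b \pmod{d_\ell}$, with $d_\ell := \ord_\ell(a)$, satisfies $\gcd(s_\ell, d_\ell) = 1$. For such $\ell$, the divisibility $\ell \mid a^p - b$ is equivalent to the single progression condition $p \equiv s_\ell \pmod{d_\ell}$, which (by Dirichlet) holds for a positive relative density $1/\phi(d_\ell)$ of primes $p$. The plan is to show that for almost every prime $p \leq N$, some admissible $\ell \leq L(N)$ divides $a^p - b$, forcing compositeness. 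I first dispose of trivial configurations: if $\gcd(a,b) > 1$, or $b = a^k$ for some $k \geq 1$, or $b = 1$ and $a > 2$, or $b = -1$ and $a > 1$, then $a^p - b$ has an obvious fixed divisor (arising from $\gcd(a,b)$, $a^{\min(k,p)}$, $a-1$, or $a+1$ for odd $p$) and the claim is immediate. The exclusion $(a,b) = (2,1)$ is precisely the only remaining case where $a - 1 = 1$ leaves no trivial factor, which is the Mersenne scenario.

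In the non-trivial cases, a Chebotarev-style argument carried out in the compositum of the Kummer fields $\mathbb{Q}(\zeta_d, a^{1/d}, b^{1/d})$ produces a positive density of admissible primes, and combined with the GRH-conditional lower bound $d_\ell \gg \ell/\log \ell$ on a density-$1$ set of $\ell$ (via Hooley's conditional proof of Artin's conjecture), one obtains
\[
M_L := \sum_{\substack{\ell \leq L\\ \ell \text{ admissible}}} \frac{1}{\phi(d_\ell)} \longrightarrow \infty \quad (L \to \infty).
\]
I would then apply a Tur\'an--Kubilius second moment argument to the counting function $\omega_L(p) := \#\{\ell \leq L \text{ admissible} : \ell \mid a^p - b\}$. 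Under GRH, the prime number theorem in arithmetic progressions gives the first moment
\[
\sum_{p \leq N} \omega_L(p) = M_L \, \pi(N)(1 + o(1)),
\]
and expanding the second moment, the Chinese Remainder Theorem reduces each summand to a prime count in a single progression with modulus $\lcm(d_{\ell_1}, d_{\ell_2}) \leq L^2$. Choosing $L = L(N) = N^{1/4 - \varepsilon}$ keeps these moduli inside the GRH-valid range and yields $\sum_{p \leq N} \omega_L(p)^2 = (M_L^2 + O(M_L))\,\pi(N)(1+o(1))$. Chebyshev's inequality then forces $\omega_L(p) \geq 1$ for all but $O(\pi(N)/M_L) = o(\pi(N))$ primes $p \leq N$; for each such $p$, the number $a^p - b$ has an admissible prime divisor $\ell \leq L(N) < a^p - b$, hence is composite.

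The principal obstacle is the control of the off-diagonal contribution to the second moment: pairs $(\ell_1, \ell_2)$ for which the orders $d_{\ell_1}, d_{\ell_2}$ share unusually large common factors (so that $\lcm(d_{\ell_1}, d_{\ell_2})$ is much smaller than $d_{\ell_1} d_{\ell_2}$) produce correlations that threaten to inflate $\mathrm{Var}(\omega_L)$ beyond the Tur\'an--Kubilius threshold $O(M_L)$. This is exactly where the Brun--Titchmarsh-type Conjecture \ref{hyp} (equivalently, the pair correlation input) enters: it furnishes upper bounds for the count of primes $p$ satisfying several divisibility constraints $\ell_j \mid a^p - b$ simultaneously which are stronger than what GRH alone provides, and precisely strong enough to absorb these anomalous pairs into an $O(M_L)$ error and close the argument.
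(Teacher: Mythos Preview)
Your overall strategy matches the paper's: a second-moment argument over a set of auxiliary primes $\ell$ for which $\ell \mid a^p-b$ becomes a coprime residue-class condition on the exponent prime $p$. But two points need correction.

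First, you have the role of Conjecture~\ref{hyp} backwards. With $L = N^{1/4-\varepsilon}$ the moduli $\lcm(d_{\ell_1},d_{\ell_2}) \le L^2 \le N^{1/2-2\varepsilon}$ already sit in the GRH range, so the count of exponent primes $p \le N$ in any such progression is $\pi(N)/\phi(\lcm) + O(N^{1/2+o(1)})$ without any appeal to PCC; there is nothing to improve there. The genuine obstacle is the \emph{sum over pairs} $(\ell_1,\ell_2)$. Writing $m = \gcd(d_{\ell_1},d_{\ell_2})$, each compatible pair carries weight $\phi(m)/(\phi(d_{\ell_1})\phi(d_{\ell_2}))$, and to bound the large-$m$ contribution by $O(M_L)$ you must show that the discrete logarithms $s_\ell \pmod m$ are not concentrated on few residue classes---equivalently, that not too many auxiliary primes $\ell$ split in the Kummer field $\mathbb Q(\zeta_m,(ba^{-r})^{1/m'})$ for any given $r$. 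That is precisely the content of Conjecture~\ref{hyp}, and it is applied to the $\ell$'s, not to the $p$'s.

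Second, passing from integer to prime exponents replaces the weight $1/d_\ell$ by $1/\phi(d_\ell)$, and the fluctuation of $\phi(d_\ell)/d_\ell$ means that equidistribution of the counting functions $\pi_{S_{m,d,r}}(L)$ no longer directly yields equidistribution of the weighted sums $\sum_{\ell\in S_{m,d,r}} 1/\phi(d_\ell)$ in $r$---yet this is exactly what is needed for the small-$m$ main term to come out as $M_L^2$. The paper handles this with a $W$-trick: it further restricts to $\ell \equiv 2 \pmod W$ with $W=\prod_{q\le w,\,q\nmid 2abh}q$ and proves (Lemma~\ref{lem:eulerphi}) that on this set $\phi(\ell-1)/(\ell-1)$ is essentially constant, reducing back to the integer-case computation. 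Your plan does not address this. The paper also needs a genuine two-generator version of Lenstra's argument (simultaneously forcing $\ord_\ell(a)=\ord_\ell(b)=(\ell-1)/2h$, which makes $(s_\ell,d_\ell)=1$ automatic) to construct the positive-density admissible set with the required equidistribution among \emph{coprime} residues; your one-line Chebotarev sketch elides this.
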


\subsection{The hypotheses}\label{subsec:conjecture}

In this subsection we discuss the hypotheses appearing in our results, namely the generalized Riemann hypothesis (GRH) and Hypothesis~\ref{hyp}. We also connect Hypothesis~\ref{hyp} to the pair correlation conjecture (PCC) stated below.

The precise form of the generalized Riemann hypothesis (GRH) that we need is as follows. It only involves certain special field extensions, which we call Kummer-type extensions.

\begin{definition} We say that a field extension $K/\mathbb{Q}$ is a \emph{Kummer-type extension} if $K$ is of the form $K=\mathbb{Q}(\zeta, a_1^{1/m_1},\ldots, a_k^{1/m_k})$, where $\zeta$ is a primitive root of unity of some order $m$, $a_i>1$ are integers and $m_i\mid m$ for all $i\leq k$.
\end{definition}

\textbf{GRH:} For any Artin $L$-function\footnote{See~\cite{neukirch} for an introduction to the theory of Artin $L$-functions.} associated with a Kummer-type extension, its zeros in the critical strip $\textnormal{Re}(s)\in (0,1)$ all lie on the critical line $\textnormal{Re}(s)=1/2$.\\

We then give a couple of remarks on Hypothesis~\ref{hyp}.

\begin{itemize}
\item The condition ``$p \equiv 1 \pmod{m}$, $ba^{-r}$ is perfect $m'$th power mod $p$'' is naturally viewed as ``$p$ splits completely in $\mathbb{Q}(\zeta_m, (ba^{-r})^{1/m'})$''. Hence Hypothesis~\ref{hyp} asks for a bound on the number of totally split primes in a number field.
\item One may view the hypothesis as a generalization of the classical Brun--Titchmarsh inequality (with extra averaging), which states that, uniformly in the region $y>m$, we have
\begin{align*}
|\{p \le y : p \,\,\textnormal{splits completely in}\,\, \mathbb{Q}(\zeta_m)\}|\ll \frac{y}{\phi(m)\log(y/m)}.  
\end{align*}
This, combined with the fact that $p$ can only split completely in $O_{a,b}(1)$ extensions of the form $\mathbb{Q}(\zeta_m,(ba^{-r})^{1/m'})$, gives an upper bound for the left-hand side of~\eqref{e1} of the form $\ll y^2/(\phi(m)^2(\log(y/m))^2)$, and so this trivial bound falls short of Hypothesis~\ref{hyp} by a few logarithms at most. 

\item One would more strongly expect for the left-hand side of~\eqref{e1} a bound of 
\begin{align}\label{e101}
\ll_{\epsilon',A}\frac{y^2}{\phi(m)^2(\log y)^2}\max\left\{\frac{1}{(\log y)^{A}},\frac{1}{m'}\right\}   
\end{align}
(and also without the primality assumption on $m'$), since one can prove under our assumptions that $[\mathbb{Q}(\zeta_m, (ba^{-r})^{1/m'} : \mathbb{Q}] \asymp \phi(m)m'$, and the probability of a prime splitting completely in $K/\mathbb{Q}$ should be comparable to $1/[K:\mathbb{Q}]$ with a wide range of uniformity, as is known in the case of cyclic extensions.

In fact, under GRH (which we are assuming in any case) the Chebotarev density theorem gives an asymptotic estimate of the form
\begin{align}\label{e104}
\frac{y}{[\mathbb{Q}(\zeta_m, (ba^{-r})^{1/m'}) : \mathbb{Q}]}+O_{\eta}((ym)^{1/2+\eta})    
\end{align}
for the number of primes splitting completely in $\mathbb{Q}(\zeta_m,(ba^{-r})^{1/m'})$, and this gives~\eqref{e101} for $y\geq m^{3+O(\eta)}$. Therefore, the essence of Hypothesis~\ref{hyp} is that we can also deal with the range where $y \approx m^{1+\delta}$.

\item Hypothesis~\ref{hyp} should be true in the full range $m'\in [1,m]$,. However, as is clear from the bound given by the Chebotarev density theorem,~\eqref{e1} becomes much more challenging to prove as the quantity $\phi(m)m'$ increases. Therefore, in our arguments we exercise great care to minimize the range of $m'$ in which we assume~\eqref{e1}, in the hope that the case of small $m'$ would turn out to lie less deep than the case of large $m'$. 

\item The bound~\eqref{e1} resembles a Brun--Titchmarsh estimate in number fields, as our naming of the hypothesis suggests. Unfortunately, the current Brun--Titchmarsh analogues of the Chebotarev density theorem are too weak to imply the Hypothesis. For example, the results of~\cite{thorner} or~\cite{debaene} are not applicable when $m' \gg \log m$.

\end{itemize}

Next we connect Hypothesis~\ref{hyp} to the pair correlation conjecture (PCC), which in fact implies our hypothesis.

\textbf{PCC:} Let $K/\mathbb{Q}$ be a Kummer-type extension, and let $L(s,\chi, K/\mathbb{Q})$ be the Artin $L$-function associated with an irreducible character $\chi$ of $\textnormal{Gal}(K/\mathbb{Q})$. Define the pair correlation function
\begin{align*}
F(X,T;\chi):=\sum_{-T\leq \gamma_1,\gamma_2\leq T}w(\gamma_1-\gamma_2)X^{(\gamma_1-\gamma_2)},
\end{align*}
where $\gamma_1,\gamma_2$ run through the imaginary parts of zeros of $L(s,\chi,K/\mathbb{Q})$ on the critical line $\textnormal{Re}(s)=1/2$ and $w(u):=\frac{4}{4+u^2}$ is a weight function (note that $F(X,T;\chi)$ is always real-valued). Further define the conductor $\mathcal{A}_{\chi}(T):=A_{\chi}T^{\chi(1)}$, where $A_{\chi}:=d_K^{\chi(1)}\textnormal{Norm}(\mathfrak{f}(\chi))$, where $d_K$ is the degree of the field extension $K/\mathbb{Q}$ and $\mathfrak{f}(\chi)$ is the Artin conductor of $\chi$. With this notation, for any $C>0$ and $1\leq X\leq T^{\chi(1)C}$, we have 
\begin{align*}
F(X,T;\chi)\ll_{C}\chi(1)^{-1}T\log \mathcal{A}_{\chi}(T).
\end{align*}

\textbf{Remarks.}

\begin{itemize}

\item This PCC conjecture is denoted $\textnormal{PCC}(\chi,\chi(1),\chi(1)^{-1},1)$ by M. R. Murty, V. K. Murty and Wong in~\cite[Conjecture 3.2]{murty-wong} and is a special case of a more general conjecture $\textnormal{PCC}(\chi,m_{\chi},c_{\chi},r)$ stated there. They state that this conjecture first arose in unpublished work of M. R. Murty and V. K. Murty 20 years earlier.

\item Evidence for PCC includes the result~\cite[Proposition 3.1]{murty-wong}, which unconditionally gives
\begin{align*}
F(X,T;\chi)\ll_{A}T(\log \mathcal{A}_{\chi}(T))^2,    
\end{align*}
so the content of the conjecture is in reducing the power of logarithm here. As noted in~\cite{murty-wong}, the form of PCC stated here is weaker than some other forms of the pair correlation conjecture in the aspect that they further require an asymptotic formula for $F(X,T;\chi)$. In particular, Montgomery's ~\cite{montgomery} classical pair correlation conjecture for the Riemann zeta function (which corresponds to $K=\mathbb{Q}$ and $\chi\equiv 1$) predicts not only an upper bound of $\ll T\log T$ for the pair correlation function of $\zeta(s)$ but more strongly an asymptotic formula of the same order of magnitude. Montgomery in fact proved this asymptotic formula unconditionally when $C\leq 1$ in the notation of the PCC conjecture above. See also~\cite{odlyzko} for numerical evidence for the pair correlation conjecture in the case of the Riemann zeta function,~\cite{rudnick-sarnak} for an asymptotic formulation for all automorphic $L$-functions, and~\cite{murty-perelli} for the formulation for any Dirichlet series in the Selberg class.
\end{itemize}

\begin{theorem}\label{thm:pcc}
PCC for all Kummer-type extensions implies Hypothesis~\ref{hyp}.
\end{theorem}

In fact, as will be clear from the proof, PCC implies a much stronger version of Hypothesis~\ref{hyp}, where we can take $\epsilon=1$.

Let us briefly comment on how our two assumptions, GRH and Hypothesis~\ref{hyp}, enter the proof. The GRH assumption is clearly necessary, as will be seen from formula~\eqref{equ1} below (which is currently provable under GRH but not unconditionally). Also, Chebotarev's density theorem is a key tool in our proofs, and in order for its error bounds to be strong enough we need to assume GRH. 

Hypothesis~\ref{hyp} is put into use in only one part of the proof. The idea is roughly as follows. For $p$ a prime, let $\ell(p)$ denote the least positive integer $l$ such that $a^l \equiv b \pmod{p}$ (if such an integer $l$ exists).
In the course of the proof, we need upper bounds for the number of primes $p\leq y$ that satisfy $p\equiv 1\pmod m$ and $\ell(p)\equiv r\pmod m$, for $y$ just a bit larger than $m$ (say $y\approx m^{1+\epsilon}$ or $y \approx m\exp((\log m)^{1/2})$).  This condition can be naturally interpreted in terms of splitting of primes in certain Kummer-type extensions, and the Chebotarev density theorem tells us that such $p$ are roughly\footnote{One does not necessarily have exact equidistribution (in sense of the natural density) even for $m$ fixed. This technical detail is discussed in more detail later on.} equidistributed among the different values of $r$, at least for $y$ large in terms of $m$. The small values of $y$ (compared to $m$) are more difficult. However, as we only need upper bounds rather than actual equidistribution, we can afford to weaken the $\ell(p)\equiv r\pmod m$ condition to $\ell(p)\equiv r\pmod{m'}$ for $m'\mid m$ being a relatively small divisor of the modulus.  Now, Hypothesis~\ref{hyp} essentially states that the values $\ell(p) \pmod{m'}$ for the primes $p\leq y$ with $p\equiv 1\pmod m$ are not clustered into a too small a set of residues, which is what we need in the proof.

In the case $y$ is very close to $m$ ($y \le m\exp((\log m)^{1/2})$), we may neglect the condition $\ell(p) \equiv r \pmod{m}$ and merely work with the congruence conditions $p \equiv 1 \pmod{m}$. Such a crude estimate does not provide estimates good enough for our purposes in regions larger than $y \le m\exp((\log m))^{1/2})$, so for larger values of $y$ one has to somehow account for the condition $\ell(p) \equiv r \pmod{m}$. As Hypothesis~\ref{hyp} is essentially our only tool for such considerations for $y$ relatively close to $m$, we require it to be applicable in the region $y \ge m\exp((\log m)^{1/2})$.

\subsection{Unconditional work}

Not much is known about the composite values of $a^n-b$ unconditionally (except in the trivial case where conditions (i) and (ii) in Question~\ref{conj1} are not satisfied). Note that if the congruence $a^n-b\equiv 0\pmod{p}$ has a solution $\ell(p)$ (which we assume to be the minimal positive solution),  the general solution to the congruence is given by $n\equiv \ell(p)\pmod{\ord_p(a)}$. The set of such $n$ has natural density $1/\ord_p(a)$. Thus, by the Borel--Cantelli lemma, a necessary (but not sufficient) condition for almost all numbers of the form $a^n-b$ to be have a prime factor $\geq K$ for every given $K$ is that
\begin{align}\label{equ1}
\sum_{\substack{p\\ p\mid a^n-b \,\, \textnormal{for some}\,\, n}} \frac{1}{\ord_p(a)}=\infty.
\end{align}
This estimate has not been proved unconditionally, meaning that if one wants to make progress on the composite values of $a^n-b$, one must assume a conjecture that implies~\eqref{equ1}. It seems that the best known unconditional estimate for the number of primes $p\leq x$ dividing some element of the sequence $a^n-b$ is $\gg \log x$ (in contrast with the conjectured order of magnitude of $\asymp \pi(x)$), proved by M. R. Murty, S\'eguin and Stewart~\cite{mss}; this is a bound that is unfortunately far too weak to imply~\eqref{equ1}.

It is also worth noting that Hooley showed unconditionally in~\cite{hooleybook} that the sequence $n\cdot 2^n+1$ produces primes (the Cullen primes) for a density zero of natural numbers $n$; as pointed out by Elsholtz in~\cite{elsholtz}, this is a lot easier than the corresponding question for the sequence $a^n-b$, since it is easier to control the distribution of Cullen primes in residue classes (regarding the problem of the sequence $a^n-b$, he states that ``current methods do not work'' for it). Hooley's method was further refined by Rieger~\cite{rieger} to yield that $p\cdot 2^p+1$ is prime for a relative density $0$ of primes $p$.

\subsection{Connection with the Artin primitive root conjecture}

The most natural conjecture known to imply~\eqref{equ1} is GRH; this implication follows from work of Moree and Stevenhagen~\cite{moree-stevenhagen}. Their work is related to Hooley's~\cite{hooley} work on Artin's primitive root conjecture, which  is the statement that $\ord_p(a)=p-1$ for infinitely many primes, whenever $a>1$ is a fixed integer that is not a perfect power. A wide generalization of Artin's conjecture was proved by Lenstra~\cite{lenstra}, again under GRH, and in fact our proof of Theorem~\ref{thm:main} involves (among other things) adapting his work to sequences of the form $a^n-b$. This produces the following intermediate result containing~\eqref{equ1} (by Mertens' theorem), which may be of independent interest.

\begin{corollary}\label{cor1} Assume GRH. Let $a>1$ be an integer that is not a perfect power, and let $b\neq 0$. Let $k\geq 1$ be the largest integer for which $b$ is a perfect $k$th power (if $|b|=1$, we define $k=1$). Then the set of primes $p$ satisfying $p\mid a^n-b$ for some $n$ and $\ord_p(a)=\frac{p-1}{2k}$ possesses a natural density, which is positive and can be computed explicitly. 
\end{corollary}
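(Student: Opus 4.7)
The plan is to translate both defining conditions into splitting behavior of primes in a family of Kummer-type extensions and then run a Lenstra-style inclusion-exclusion in the manner of \cite{lenstra}. Write $b=c^k$, so that $c$ is an integer which, by the maximality of $k$, is not a perfect power (and, in particular, not a perfect square, unless $|b|=1$ with $c=\pm 1$, which I would treat as a degenerate case). For a prime $p\nmid 2abc$ the group $(\mathbb{Z}/p\mathbb{Z})^*$ is cyclic of order $p-1$, so $\ord_p(a)=(p-1)/(2k)$ is equivalent to the conjunction of (a) $2k\mid p-1$, (b) $a$ is a $2k$th power mod $p$, and (c) $a$ is not a $2kq$th power mod $p$ for any prime $q\mid (p-1)/(2k)$. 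Under (a) and (b) the subgroup $\langle a\rangle$ is precisely the index-$2k$ subgroup of $2k$th powers, so the condition ``$p\mid a^n-b$ for some $n$'' is equivalent to $b=c^k$ being a $2k$th power mod $p$, which reduces to $c$ being a quadratic residue mod $p$.

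Each of these is a splitting condition in a Kummer-type extension: for $p$ unramified, $t\mid p-1$ and $a$ a $t$th power mod $p$ together amount to $p$ splitting completely in $\mathbb{Q}(\zeta_t,a^{1/t})$, and analogously for $c$. Applying inclusion-exclusion to condition (c), I would rewrite the indicator of our set as
\begin{align*}
\sum_{s\textup{ squarefree}}\mu(s)\,\mathbf{1}_{p\textup{ splits completely in }K_s},\qquad K_s:=\mathbb{Q}\bigl(\zeta_{2ks},a^{1/(2ks)},c^{1/2}\bigr),
\end{align*}
each $K_s$ being a Kummer-type extension in the sense of the paper (with modulus $m=\lcm(2ks,2)$).

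Next, for each fixed $s$ I would invoke the GRH-conditional effective Chebotarev density theorem, which gives that the density of primes $p\leq x$ splitting in $K_s$ equals $1/[K_s:\mathbb{Q}]+O\bigl([K_s:\mathbb{Q}]\cdot x^{-1/2}\log(x\,\textup{disc}(K_s))\bigr)$. Since $a$ and $c$ are non-powers, $[K_s:\mathbb{Q}]$ grows like $2\cdot 2ks\cdot \phi(2ks)\gg s^{2-o(1)}$, so the tail $\sum_{s>S}1/[K_s:\mathbb{Q}]$ decays in $S$; the inclusion-exclusion therefore converges absolutely, the natural density of our set exists, and it equals
\begin{align*}
\delta_{a,b}:=\sum_{s\textup{ squarefree}}\frac{\mu(s)}{[K_s:\mathbb{Q}]}.
\end{align*}
Rearranging this Dirichlet series over primes $q$ turns $\delta_{a,b}$ into an Euler product, which will be the explicit form.

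The main obstacle I anticipate is the exact evaluation of the degrees $[K_s:\mathbb{Q}]$ and the accompanying verification that $\delta_{a,b}>0$. The generic identity $[K_s:\mathbb{Q}]=2\cdot 2ks\cdot\phi(2ks)$ needs corrections in three places: at the Capelli-type exceptions arising if $a$ or $-4a^{4}$ happens to fall into a cyclotomic power form, at the low-lying primes dividing $2k$, and at the interactions between $\sqrt{c}$ and the cyclotomic field (in particular the cases $c=\pm 1$, where $\sqrt{c}\in\mathbb{Q}(\zeta_4)$ collapses the extension). Each correction is a finite, explicit, multiplicative adjustment, and I would incorporate them into the local factors of $\delta_{a,b}$ exactly as in Lenstra's treatment of Artin's conjecture. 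The local factor at each prime $q\nmid 2k$ then differs from $1$ by $O(1/q^{2})$ and is in particular bounded away from $0$, so the Euler product is strictly positive, yielding the claimed positivity and explicit evaluation of $\delta_{a,b}$.
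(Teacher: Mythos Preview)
Your approach is correct and is essentially the paper's own argument (Section~\ref{sec:equiLog}, proof of Lemma~\ref{lem:denPos}): both translate the order and solvability conditions into splitting in Kummer-type fields, run Lenstra's inclusion--exclusion over primes, apply GRH-conditional Chebotarev together with the standard Hooley tail estimate, and verify positivity via the product formula~\eqref{equ4.1}. Your single tower $K_s=\mathbb{Q}(\zeta_{2ks},a^{1/(2ks)},c^{1/2})$ simply repackages the paper's $F_{1,1,0}$ and $L_\ell=\mathbb{Q}(\zeta_{q(\ell)},a^{1/q(\ell)})$, and your reduction of ``$b$ is a $2k$th power'' to ``$c$ is a quadratic residue'' is exactly the role played in the paper by the observation $b^{1/2h}\in\mathbb{Q}(\zeta_{4|ab|h})$.
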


\subsection{Acknowledgments} The authors thank the anonymous referees for numerous helpful corrections. The authors thank Peter Sarnak and Jesse Thorner for helpful discussions. The first author was supported by the Emil Aaltonen foundation and worked in the Finnish Centre of Excellence in Randomness and Structures (Academy of Finland grant no. 346307). The second author was supported by a Titchmarsh Fellowship and by European Union's Horizon
Europe research and innovation programme under Marie Sk\l{}odowska-Curie grant agreement No 101058904.

\section{Notation, conventions, and some preliminaries}
\label{sec:notation}

Without further mention, we assume GRH in the lemmas and theorems that follow. We will assume Hypothesis~\ref{hyp} only in the proof of Lemma~\ref{lem:hypothesis} below.

We may assume in the proofs of Theorems~\ref{thm:main} and~\ref{thm:almostprime} that $a$ is not a perfect power, as the case of $a$ being a perfect power immediately reduces to the generic case. If $|b| \le 1$, the conclusion of Theorems~\ref{thm:main} and~\ref{thm:primeexp} is trivial, since $a^{m}-1\mid a^n-1$ for $m\mid n$ and $a^m+1\mid a^n+1$ for $m\mid n$ and $m$ odd. Thus we may assume $|b| > 1$ when proving these theorems. We may also assume that $(a, b) = 1$, since otherwise $a^n-b$ is composite.

If $p$ is a prime not dividing $a$, we denote by $\ord_p(a)$ the least positive integer $e$ such that $a^e \equiv 1 \pmod{p}$.

We denote by $\zeta_k$ any primitive $k$th root of unity.

We let $\tau(n), \phi(n)$ denote the divisor and Euler phi functions, respectively. We let $(a,b)$ stand for the greatest common divisor of $a$ and $b$ and $\textnormal{lcm}(a,b)$ for their least common multiple. By $\textnormal{rad}(n)$ we denote the product of the prime factors of $n$.

The natural density of a set $S \subset \mathbb{N}$ is defined as the limit
$$d(S):=\lim_{x \to \infty} \frac{|S \cap [1, x]|}{x},$$
provided that the limit exists. We similarly define the relative density of a set $A \subset \mathbb{N}$ in the set $B \subset \mathbb{N}$ as
$$\lim_{x \to \infty} \frac{|A \cap [1, x]|}{|B \cap [1, x]|},$$
assuming that $A \subset B$ and that the limit exists. The density $d_{\mathbb{P}}(\mathcal{P})$ of a subset $\mathcal{P}$ of the primes $\mathbb{P}$ is to be considered as its relative density in the set of primes.

For any subset $S$ of the primes, we define $\pi_S(x):=|\{p\leq x:\,\, p\in S\}|$.

Given an extension $L/K$ of number fields, a prime $\mathfrak{p}$ of $\mathcal{O}_K$, and a prime $\mathfrak{P}$ of $\mathcal{O}_L$ lying above $\mathfrak{p}$, we define the Artin symbol $\left(\frac{L/K}{\mathfrak{P}}\right)$ as the unique element $\sigma\in \textnormal{Gal}(L/K)$ satisfying
\begin{align*}
\sigma(\alpha)\equiv \alpha^{\textnormal{Norm}(\mathfrak{p})}\pmod{\mathfrak{P}}    
\end{align*}
for all $\alpha \in L$. Further, if $K=\mathbb{Q}$ and $p$ is a rational prime unramified in $L$, we define $\left(\frac{L/\mathbb{Q}}{p}\right)$ as the conjugacy class of possible values of  $\left(\frac{L/\mathbb{Q}}{\mathfrak{P}}\right)$ with $\mathfrak{P}$ lying above $p$; it can be checked that such values do indeed form a conjugacy class. (This definition could be extended to $K \neq \mathbb{Q}$ as well, but we will only need the case $K = \mathbb{Q}$.)

The prime $p$ is said to split completely in $L$ if $\left(\frac{L/\mathbb{Q}}{p}\right)$ is the conjugacy class consisting of the identity of $\Gal(L/\mathbb{Q})$.

We use the following version of the Chebotarev density theorem, due to Serre~\cite{serre} and conditional on GRH (this improves on work of Lagarias and Odlyzko~\cite{lo}).
 
\begin{lemma}[Chebotarev density theorem]\label{lem:chebotarev}
Assume GRH. Let $K$ be a finite Galois extension of $\mathbb{Q}$ with Galois group $G$. Let $C$ be a conjugacy class of $G$. The number of (rational) unramified primes $p$ with $p \le x$ and Artin symbol $\left(\frac{K/\mathbb{Q}}{p}\right) = C$ is
$$\pi_C(x)=\frac{|C|}{|G|}\Li(x) + O\left(\frac{|C|}{|G|}\sqrt{x}(\log \textnormal{disc}(K/\mathbb{Q}) + [K : \mathbb{Q}]\log x)\right),$$
where $\textnormal{disc}(K/\mathbb{Q})$ is the discriminant of $K/\mathbb{Q}$ and $\Li(x)=\int_{2}^x \frac{dt}{\log t}$. 
\end{lemma}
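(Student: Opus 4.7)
The plan is to prove the stated effective form of the Chebotarev density theorem by combining the standard explicit-formula machinery with Brauer's induction theorem, following the strategy of Lagarias--Odlyzko and its refinement by Serre. First I would pass from $\pi_C(x)$ to the weighted counting function
$$\psi_C(x):=\sum_{\substack{p^m\leq x \\ \left(\frac{K/\mathbb{Q}}{p^m}\right)=C}}\log p,$$
since the explicit formula is adapted to this quantity and the contribution of higher prime powers is bounded trivially by $O([K:\mathbb{Q}]\sqrt{x})$. Partial summation will then recover the $\mathrm{Li}(x)$ main term and the stated error term for $\pi_C(x)$.

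Next I would expand $\psi_C(x)$ via orthogonality of the irreducible characters $\chi$ of $G=\textnormal{Gal}(K/\mathbb{Q})$:
$$\psi_C(x)=\frac{|C|}{|G|}\sum_{\chi}\overline{\chi(C)}\,\psi_\chi(x),\qquad \psi_\chi(x)=\sum_{p^m\leq x}\chi\!\left(\sigma_{p^m}\right)\log p.$$
The generating Dirichlet series of $\psi_\chi$ is (essentially) $-L'/L(s,\chi,K/\mathbb{Q})$. A priori the Artin $L$-function $L(s,\chi,K/\mathbb{Q})$ is only known to be meromorphic, so here I would invoke Brauer's induction theorem to write $\chi$ as an integral combination of characters induced from one-dimensional characters of subgroups; correspondingly, $L(s,\chi,K/\mathbb{Q})$ factors (up to finitely many Euler factors) into a product and quotient of Hecke $L$-functions $L(s,\psi_i,K_i/\mathbb{Q})$ for intermediate fields $K_i$. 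These Hecke $L$-functions are entire (when $\psi_i$ is nontrivial) and satisfy a functional equation, which lets us apply the standard explicit formula to each of them.

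Then I would apply Perron's formula to $-L'/L(s,\psi_i)$ with a truncation at height $T$, shifting the contour past the critical line. Under GRH the only zeros encountered lie on $\textnormal{Re}(s)=1/2$, each contributing $x^\rho/\rho\ll x^{1/2}/|\rho|$. Using the Riemann--von Mangoldt formula, the number of zeros with $|\textnormal{Im}(\rho)|\leq T$ is $\ll T\log\mathcal{A}(T)+T$ where $\mathcal{A}$ incorporates both the degree $[K_i:\mathbb{Q}]$ and the norm of the Artin conductor $\mathfrak{f}(\psi_i)$. Choosing $T$ comparable to $\sqrt{x}$ (or slightly larger) yields
$$\psi_{\psi_i}(x)=\delta(\psi_i)x+O\!\left(\sqrt{x}\bigl(\log\textnormal{Norm}(\mathfrak{f}(\psi_i))+[K_i:\mathbb{Q}]\log x\bigr)\log x\right),$$
where I allow myself to fold one factor of $\log x$ into the main constant as Serre does.

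Finally I would reassemble the estimate by summing over $\chi$, using the conductor--discriminant formula
$$\log|\textnormal{disc}(K/\mathbb{Q})|=\sum_\chi \chi(1)\log\textnormal{Norm}(\mathfrak{f}(\chi))$$
to bundle all conductor contributions of the induced Hecke characters into a single $\log|\textnormal{disc}(K/\mathbb{Q})|$ term, and the trivial bound $\sum_\chi|\chi(C)|\chi(1)\leq|G|$ to control the character sum. Converting $\psi_C$ back to $\pi_C$ via partial summation yields the stated formula. The main obstacle is the control of the error terms under Brauer induction, because the intermediate Hecke characters can have individually large conductors; this is exactly what the conductor--discriminant formula resolves, and what makes Serre's bound come out cleanly in terms of $\log\textnormal{disc}(K/\mathbb{Q})$ and $[K:\mathbb{Q}]\log x$ rather than a less uniform expression.
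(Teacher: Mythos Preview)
The paper does not prove this lemma at all: its entire proof is the single sentence ``This is \cite[Th\'eor\`eme 4]{serre}.'' Your proposal is a reasonable outline of how the cited result of Serre (building on Lagarias--Odlyzko) is actually established, so in that sense you are reproducing the content behind the citation rather than deviating from the paper. If your goal is to match the paper, a one-line reference to Serre suffices; if your goal is to give a self-contained argument, your sketch is on the right track but you should be a bit more careful at the Brauer-induction step, since the integer coefficients in $\chi=\sum_i n_i\,\mathrm{Ind}_{H_i}^G\psi_i$ can be negative and their size must be controlled when reassembling the error terms (the Lagarias--Odlyzko route of reducing to the cyclic subgroup $\langle g\rangle$ for a fixed $g\in C$ sidesteps this more cleanly than invoking Brauer for each irreducible $\chi$).
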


\begin{proof}
This is~\cite[Th\'eor\`eme 4]{serre}.
\end{proof}

\section{Overview of the method}
\label{sec:overview}

In this section, we give a sketch of the proof method of Theorem~\ref{thm:main}; the actual details in subsequent sections are slightly different and more complicated, but the purpose of this section is just to illustrate the approach.

As already mentioned, we may assume that $a$ is not a perfect power. Let 
\begin{align}
P_a=\{p\in \mathbb{P}:\,\, a^x\equiv b\pmod p\,\, \textnormal{for some}\,\, x\};    
\end{align}
since $P_a$ contains those primes $p$ for which $a$ is a primitive root, under GRH the set $P_a$ contains a positive proportion of the primes. For each $p \in P_a$, there exists a unique integer $\ell(p) \in [1, \ord_p(a)]$ such that $a^{\ell(p)} \equiv b \pmod{p}$. Now, for any integer $n \equiv \ell(p) \pmod{\ord_p(a)}$, we have $a^n \equiv b \pmod{p}$, implying that $a^n - b$ is not prime (except possibly for $n = \ell(p)$). The goal is to prove that the density of integers covered by such arithmetic progressions $\ell(p)\pmod{\ord_p(a)}$ is $1$. Thus the statement is that the residue classes $\ell(p)\pmod{\ord_p(a)}$ for $p\leq P\to \infty$ are nearly (up to a vanishing proportion of numbers) a \emph{covering system} of the integers. This is generally speaking a rather delicate condition; see~\cite{ffkpy} and~\cite{balister} for work on  finite sets of congruences covering all but an $\varepsilon$-proportion of the integers\footnote{Note that whether or not residue classes $a_p\pmod {p-1}$ cover almost all numbers depends heavily on the values of the $a_p$; for example, it is known that the proportion of integers covered by $0\pmod {p-1}$ for $p>P$ approaches $0$ as $P\to \infty$. Also if $\mathcal{P}$ is any subset of the primes with sum of reciprocals less than $\frac{1}{2}$, say, then $a_p\pmod{p-1}$ for $p\in \mathcal{P}$ leaves uncovered a positive proportion of all numbers.}.

If the conditions $x \equiv \ell(p) \pmod{\ord_p(a)}$ were independent of each other (which would be the case by the Chinese reminder theorem if the moduli $\ord_p(a)$ and $\ord_q(a)$ were coprime for all $p \neq q$), the density of integers not covered by such congruence conditions would be
$$\prod_{p \in P_a} \left(1 - \frac{1}{\ord_p(a)}\right).$$
Under GRH the relative density of $P_a$ inside the primes is positive, so by Mertens' theorem the above product evaluates to $0$. 

However, since $\ord_p(a)$ and $\ord_q(a)$ are typically \emph{not} coprime\footnote{Note that it does not help to restrict to a subset of primes for which $\ord_p(a)/2$ are pairwise coprime, since such a set always has finite sum of reciprocals. If for example we restrict to primes $p$ such that $(p-1)/2$ is also prime, then it is known that such primes have bounded sum of reciprocals (even though their infinitude is not known).}, we have to take into account the dependencies between the conditions imposed by different primes. For this we utilize the second moment method.

\begin{lemma}\label{lem:2moment}
Let $(\Omega, \textnormal{Pr})$ be any finitely additive probability space, and let $A_1,A_2,\ldots,A_n\in \Omega$ be events. Denote
\begin{align}\label{e3}
\mu:=\sum_{i=1}^n \Pr(A_i).
\end{align}
Then, for any $\varepsilon>0$,  we have 
\begin{align}\label{e2}\Pr\left(x\in \Omega:\,\, ||\{i\in [1,n]:\,\, x\in A_i\}|-\mu|\geq \varepsilon \mu\right) \le \varepsilon^{-2}\left(\sum_{i = 1}^n \sum_{j = 1}^n \Pr(A_i \cap A_j)/\mu^2-1\right).
\end{align}
\end{lemma}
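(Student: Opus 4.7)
The statement is a Chebyshev-type inequality for the counting random variable, so the plan is the standard second-moment method. Let $X : \Omega \to \mathbb{Z}_{\geq 0}$ be defined by
\[ X(x) := |\{i \in [1,n] : x \in A_i\}| = \sum_{i=1}^n \mathbf{1}_{A_i}(x), \]
where $\mathbf{1}_{A_i}$ is the indicator of $A_i$. Since the sum is finite, working with finitely additive probabilities causes no issue: expectations of finite sums of indicators split as sums.

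The first step is to compute the first moment: by linearity, $\mathbb{E}[X] = \sum_{i=1}^n \Pr(A_i) = \mu$. The second step is the second moment: since $\mathbf{1}_{A_i}\mathbf{1}_{A_j} = \mathbf{1}_{A_i \cap A_j}$,
\[ \mathbb{E}[X^2] = \sum_{i=1}^n \sum_{j=1}^n \Pr(A_i \cap A_j). \]
Hence the variance is
\[ \mathrm{Var}(X) = \mathbb{E}[X^2] - \mu^2 = \sum_{i=1}^n\sum_{j=1}^n \Pr(A_i\cap A_j) - \mu^2 = \mu^2\left(\sum_{i=1}^n\sum_{j=1}^n \Pr(A_i\cap A_j)/\mu^2 - 1\right). \]

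The final step is Chebyshev's inequality applied to $X$: for any $\varepsilon > 0$,
\[ \Pr(|X - \mu| \geq \varepsilon \mu) \leq \frac{\mathrm{Var}(X)}{\varepsilon^2 \mu^2}, \]
which, combined with the variance computation, gives exactly the right-hand side of \eqref{e2}.

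There is really no obstacle here — the lemma is just Chebyshev's inequality packaged in the form that will be convenient for the subsequent application (where the $A_i$ will be the events ``$n$ lies in the residue class $\ell(p) \pmod{\mathrm{ord}_p(a)}$'' and $\mu$ will be the expected number of primes whose progression covers a random $n$). The only thing worth checking is that the finite-additivity hypothesis is enough, but since only finitely many events are involved, all the manipulations (linearity of expectation, squaring the sum) reduce to finite sums of probabilities of Boolean combinations of the $A_i$, which are well-defined for any finitely additive probability.
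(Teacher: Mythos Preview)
Your proof is correct and takes essentially the same approach as the paper: define the counting variable $X=\sum_i 1_{A_i}$, compute its mean and second moment, and apply Chebyshev's inequality. The paper's proof is just a terser version of what you wrote.
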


\begin{proof}
This follows by writing the left-hand side as 
\begin{align*}
 \Pr\left(x\in \Omega:\,\, \left|\sum_{i=1}^n 1_{A_i}(x)-\mu\right|\geq \varepsilon \mu\right)   
\end{align*}
and using Chebychev's inequality to upper-bound this as 
\begin{align*}
\varepsilon^{-2}\mu^{-2}\mathbf{E}\left|\sum_{i=1}^n 1_{A_i}(x)-\mu\right|^2,    
\end{align*}
and then expanding out the square.
\end{proof}

\begin{remark}
If the $A_i$ were pairwise independent, the right-hand side of~\eqref{e2} would be $0$. More generally, if we show that the $A_i$ are approximately independent, then the right-hand side of~\eqref{e2} is small.   
\end{remark}

In our case, the events $A_p$ correspond to a ``random'' integer being congruent to $\ell(p) \pmod{\ord_p(a)}$, where $p$ ranges over $P_a \cap [1, x]$, from which we get $\Pr(A_p) = 1/\ord_p(a)$. (See Section \ref{sec:chungBound} for a precise formulation of this idea.) Then the expected value $\mu$ appearing in~\eqref{e3} is, more or less, equal to
\begin{align}\label{equ2}
\sum_{\substack{p\leq x\\p\in P_a}} \frac{1}{\ord_p(a)}.  
\end{align}
For reasons explained below, we will in fact restrict to a subset of $P_a$ where $\ord_p(a) \gg p-1$, which moreover has a positive relative density inside the primes. This then allows for an asymptotic evaluation of \eqref{equ2}. We now turn to the sum of the pairwise intersections in~\eqref{e2}, which are much more difficult to analyze.

Let $p$ and $q$ be primes, and write $(\ord_p(a), \ord_q(a)) = m$. The system $y \equiv \ell(p) \pmod{\ord_p(a)}, y \equiv \ell(q) \pmod{\ord_q(a)}$ has a solution if and only if $m \mid \ell(p) - \ell(q)$. If a solution exists, then $\Pr(A_p \cap A_q) = \frac{m}{\ord_p(a)\ord_q(a)}$, and otherwise $\Pr(A_p \cap A_q) = 0$. Thus
\begin{align}\label{equ3}
\sum_{\substack{p\leq x\\p\in P_a}} \sum_{\substack{q \leq x\\q\in P_a}} \Pr(A_p \cap A_q) = \sum_{m \le x} \sum_{\substack{p, q \le x\\ p,q\in P_a\\(p-1, q-1) = m\\ m \mid \ell(p) - \ell(q)}} \frac{m}{\ord_p(a)\ord_q(a)}.
\end{align}
If the pairs $(\ell(p),\ell(q))$ were equidistributed modulo all $m\leq x$, even when conditioned on the event $(\ord_p(a), \ord_q(a)) = m$, we would see that $A_p$ and $A_q$ are independent ``on average'', resulting in the same asymptotics for~\eqref{equ3} as for~\eqref{equ2}. 

Unfortunately, the set $P_a$ does \emph{not} have the required equidistribution property for all moduli.\footnote{For example, if $p\equiv \pm 3\pmod 8$ and $2^x \equiv 9 \pmod{p}$, then $x$ must be even. More generally, congruence conditions for $p$ give information on quadratic residues modulo $p$, leading to bias even in power-free cases. For example, if $2^x \equiv -3 \pmod{p}$ and $p \equiv 1 \pmod{3}$, $p\equiv \pm 3\pmod 8$, then $-3$ is a quadratic residue $\pmod p$ and $2$ is not, so $x$ must be even.} In Section~\ref{sec:equiLog}, we construct a positive density set $S$ of primes $p$ (depending on $a,b$) for which $a^x \equiv b \pmod{p}$ is solvable and for which the smallest solution $\ell(p)$ of the congruence does enjoy the required equidistribution property for all fixed $m$ by adapting the method of Lenstra~\cite{lenstra}. We then apply Lemma~\ref{lem:2moment} to this subset $S$ of primes in place of $P_a$ (in which case showing the existence and positivity of $d_{\mathbb{P}}(S)$ requires some work).

The main difficulty then is that we need to estimate~\eqref{equ3} in \emph{all} ranges of $m$, not only for $m$ fixed (even though the case of bounded $m$ should give the main term in~\eqref{equ3} and larger $m$ should only contribute an error term). This will be achieved in four steps, carried out in different sections.

\begin{itemize}
\item [(i)] We handle the case of fixed $m$ (say $m\leq N$) in Section~\ref{sec:gcdSmall}, making use of the equidistribution of the set $S$ in residue classes, which in turn is proved in Section~\ref{sec:equiLog}.

\item [(ii)] The contribution of medium-large $m$ (say $N \leq m\leq \max\{p,q\}^{c}$ for some small constant $c>0$) is handled in Subsection~\ref{sec:primeLarge} using the effective version of the Chebotarev density theorem stated in Lemma~\ref{lem:chebotarev}.

\item [(iii)] The case of very large $m$ (say $\max\{p,q\}/ \exp(\log \max\{p,q\})^{1/2})\leq m\leq \max\{p,q\}$) is handled (unconditionally) by applying Selberg's sieve in Subsection~\ref{sec:primeSmall}. The range here is optimal in the sense that this argument (which starts by dropping the condition $m\mid \ell(p)-\ell(q)$) would not work in any larger regime. 

\item [(iv)] The remaining case of large $m$ ($\max\{p, q\}^c \le m \le \max\{p,q\}/ \exp(\log \max\{p,q\})^{1/2})$) is where we utilize Hypothesis~\ref{hyp}. This case is dealt with in Subsection~\ref{sec:primeMedium}.
\end{itemize}

Combining the estimates for these four different regimes will yield Theorem~\ref{thm:main}.

We comment on some difficulties arising with controlling the contribution of large values of $m$. To bound the sum in~\eqref{equ3} one has to bound the number of primes $x/2 \le p \le x$  (or more generally $x_1 \le p \le x_2$) satisfying $\ell(p) \equiv r \pmod{m}$ summed over all $r \pmod{m}$. (The contribution of large $m$ would be too large if the values $\ell(p) \pmod{m}$ attained only a few values, so we cannot just drop the condition on $\ell(p)$ and work only with the congruence conditions $p \equiv 1 \pmod{m}$.) This condition can be naturally interpreted as $p$ splitting completely in a certain field extension, at least for $p\in S$, where $S$ is the set constructed in Section~\ref{sec:equiLog}. 

If $m$ is small (say $m \le x^{1/4 - \epsilon}$), one can apply the GRH-conditional Chebotarev density theorem for all $r$ separately (see case (ii) above). If $m$ is of magnitude $\sqrt{x}$, one can barely apply the Chebotarev density theorem for a single value of $r$ directly, and to control all $m$ values of $r$ one would need a uniform error term. If $m$ is much larger than $\sqrt{x}$, say $x^{3/4}$, the GRH-conditional square root error term is inapplicable. It is here that we use Hypothesis~\ref{hyp}. 

Note that while for fixed $m$ (see case (i)) we need exact (asymptotic) equidistribution, for large values of $m$ we only need that the residues $\ell(p) \pmod{m'}$ do not obtain a small set of values very often. Heuristically each value $\pmod{m'}$ occurs roughly $1/m'$ of the time (even with $m'$ quite large), but we only need that each value occurs at most $\max\{(1/m')^{\epsilon},1/(\log y)^2\}$ of the time. This is indeed what Hypothesis~\ref{hyp} tells us.

\section{Equidistribution of the discrete logarithm}
\label{sec:equiLog}

Let $h$ be the largest integer such that $|b|$ is a perfect power of order $h$. Recall from Section~\ref{sec:notation} that we can assume $a$ is not a perfect power. Let $S\subset \mathbb{P}$ (which depends on $a$ and $b$) be a set of primes defined as
\begin{align*}
S:=\{p\in \mathbb{P}:\,\, p \equiv 1 \pmod{|4abh|}, \,\, a,b\,\, \textnormal{perfect $2h$th powers}\pmod p,\,\, \ord_p(a) = \frac{p-1}{2h}\}.    
\end{align*}
Note that for any $p \in S$ we have $\ord_p(b) \mid \frac{p-1}{2h} = \ord_p(a)$, so the congruence $a^x \equiv b \pmod{p}$ has a solution. 

Let $\ell(p)$ be the smallest nonnegative solution to $a^x \equiv b \pmod{p}$ for $p \in S$. For each $m, d, r \in \mathbb{N}$, let 
\begin{align}\label{equ17}
S_{m, d, r}:=S\cap \{p\in \mathbb{P}:\,\, md \mid \ord_p(a),\,\, \ell(p) \equiv r \pmod{m}\}.
\end{align}
Note then that for $p\in S_{m,d,r}$ any solution to $a^x \equiv b \pmod{p}$ satisfies $x \equiv r \pmod{m}$. Note also that 
\begin{align*}
p\in S_{m,d,r}\Longleftrightarrow p\in S\quad \textnormal{and}\quad  md\mid \ord_p(a)\quad \textnormal{and}\quad  \exists x:\,ba^{-r}\equiv x^{2hm}\pmod p.   
\end{align*}
Indeed, for the forward implication note that if $p\in S$ and $\ell(p)=r+jm$, then $a^{\ell(p)}\equiv b\pmod{p}$ is equivalent to $ba^{-r}\equiv a^{jm}\pmod p$, and here $a$ is a $2h$th power $\pmod{p}$. Conversely, suppose that $p\in S$, $md\mid \ord_p(a)$ and $ba^{-r}\equiv x^{2hm}\pmod p$. Then by the assumption on $\ord_p(a)$ we have $x^{2h}\equiv a^j$ for some $j$, so $ba^{-r}\equiv a^{jm}\pmod p$ for some $j$, and this implies $r+jm\equiv \ell(p)\pmod m$. 

Our goal in this section is to prove the following two lemmas.

\begin{lemma}
\label{lem:denPos}
The relative density $d_{\mathbb{P}}(S)$ exists and is positive.
\end{lemma}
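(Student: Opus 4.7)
The plan is to adapt the method of Lenstra \cite{lenstra} for near-primitive root problems. First, each condition defining $S$ is recast as a splitting condition in a Kummer-type extension. Writing $M := 4|ab|h$, the conjoint conditions $p \equiv 1 \pmod{M}$ and ``$a, b$ are $2h$-th powers mod $p$'' are, for $p$ unramified in the relevant extension, equivalent to $p$ splitting completely in
\[
K_0 := \mathbb{Q}\bigl(\zeta_{M},\, a^{1/(2h)},\, b^{1/(2h)}\bigr).
\]
For each squarefree $n \geq 1$, set
\[
K_n := \mathbb{Q}\bigl(\zeta_{\lcm(M, 2hn)},\, a^{1/(2hn)},\, b^{1/(2h)}\bigr).
\]
Conditional on $p$ splitting in $K_0$, the condition $\ord_p(a) = (p-1)/(2h)$ is equivalent to $a$ failing to be a $2h\ell$-th power mod $p$ for every prime $\ell$; for each fixed $p$ only finitely many $\ell$ are relevant. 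M\"obius inversion then gives
\[
\mathbf{1}[p \in S] \;=\; \sum_{\substack{n \geq 1 \\ n\text{ squarefree}}} \mu(n)\,\mathbf{1}[p \text{ splits completely in } K_n].
\]

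Summing over $p \leq x$ and applying the GRH-conditional Chebotarev density theorem (Lemma \ref{lem:chebotarev}) term-by-term yields an expression for $|S \cap [1,x]|$. For $n$ up to about $x^{1/4}$, Chebotarev directly supplies $|\{p \leq x : p \text{ splits in } K_n\}| = \Li(x)/[K_n:\mathbb{Q}] + O(\sqrt{x}(\log x)^{2})$, and these errors sum to $o(\pi(x))$. For larger $n$, where individual Chebotarev bounds are no longer sharp enough, the tail is controlled by combining the trivial congruence bound $\pi(x;\,1,\,2hn)$ with a Hooley-style counting of prime divisors of $a^k - 1$ for small $k$, exploiting the degree bound $[K_n:\mathbb{Q}] \gg n\phi(n)$ from Kummer theory (valid under the assumption that $a$ is not a perfect power). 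One obtains
\[
d_\mathbb{P}(S) \;=\; \sum_{\substack{n \geq 1 \\ n\text{ squarefree}}} \frac{\mu(n)}{[K_n:\mathbb{Q}]},
\]
establishing the existence of $d_\mathbb{P}(S)$.

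The main obstacle is showing that $d_\mathbb{P}(S) > 0$, and this is where Lenstra's argument does its essential work. The degrees $[K_n:\mathbb{Q}]$ behave multiplicatively in $n$ away from the finitely many primes dividing $2h|ab|$, so the series reorganizes as an absolutely convergent Euler product
\[
d_\mathbb{P}(S) \;=\; \frac{1}{[K_0:\mathbb{Q}]}\prod_{\ell\text{ prime}}\left(1 - \frac{[K_0:\mathbb{Q}]}{[K_\ell:\mathbb{Q}]}\right).
\]
For each prime $\ell$ coprime to $2h|ab|$, Kummer theory yields $[K_\ell : K_0] = \ell(\ell-1)$, so the corresponding factor equals $1 - 1/(\ell(\ell-1)) > 0$, and the product over such $\ell$ converges to a strictly positive limit. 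For the remaining finitely many small primes $\ell \mid 2h|ab|$, the positivity of the corresponding factor reduces to verifying $[K_\ell : K_0] > 1$, i.e., that adjoining $\zeta_\ell$ and $a^{1/(2h\ell)}$ properly enlarges $K_0$. The hypotheses that $a$ is not a perfect power, that $h$ is maximal with $|b| = c^h$, and that $(a,b) = 1$ together preclude any such Kummer-theoretic collapse, so no local factor vanishes and $d_\mathbb{P}(S) > 0$.
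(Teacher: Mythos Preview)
Your approach is essentially the paper's: both invoke Lenstra's framework, characterize $S$ via splitting conditions in Kummer-type extensions, obtain existence of $d_{\mathbb{P}}(S)$ from GRH-Chebotarev plus a Hooley-type tail bound, and deduce positivity from an Euler-product analysis. The paper phrases the positivity step as exhibiting an automorphism $\sigma\in\Gal(F_{1,1,0}L_n/\mathbb{Q})$ that fixes $F_{1,1,0}$ but none of the $L_\ell$, while you phrase it as checking that each Euler factor is nonzero; these are two faces of the same computation.

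There is, however, a genuine gap in your positivity argument. You write that ``the degrees $[K_n:\mathbb{Q}]$ behave multiplicatively in $n$ away from the finitely many primes dividing $2h|ab|$, so the series reorganizes as an absolutely convergent Euler product,'' and then you write down an Euler product over \emph{all} primes. But the M\"obius sum $\sum_n \mu(n)/[K_n:\mathbb{Q}]$ equals your product $\tfrac{1}{[K_0:\mathbb{Q}]}\prod_\ell(1-1/[K_\ell:K_0])$ only if $[K_n:K_0]=\prod_{\ell\mid n}[K_\ell:K_0]$ for \emph{every} squarefree $n$, including those divisible by the bad primes. Merely checking $[K_\ell:K_0]>1$ for each bad $\ell$ individually does not give this; in general, entanglement among the fields $K_\ell$ at small primes can make the M\"obius sum differ from (and even vanish while) the naive product is positive. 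The phrase ``the hypotheses \ldots\ preclude any such Kummer-theoretic collapse'' is precisely where the work lies, and you have not done it.

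The paper carries out exactly this step: it shows that adjoining $a^{1/q(\ell)}$ to $K'=\mathbb{Q}(\zeta_{4|ab|h},\zeta_{q(n)},a^{1/2h},a^{1/q(n/\ell)})$ always increases the degree by $\ell$, using the fact that for $a$ not a perfect power, $[\mathbb{Q}(\zeta_t,a^{1/s}):\mathbb{Q}(\zeta_t)]\in\{s,s/2\}$ with the halving occurring exactly when $\sqrt{a}\in\mathbb{Q}(\zeta_t)$. Since $\sqrt{a}\in\mathbb{Q}(\zeta_{4|ab|h})\subset K_0$, the usual quadratic entanglement is already absorbed into the base field, and the degrees genuinely multiply. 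That computation is the content of the lemma; once you supply it, your Euler product is valid and your argument goes through.
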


\begin{lemma}
\label{lem:denInv}
For $d\in \mathbb{N}$ and $m \equiv 0 \pmod{|2ab|}$ fixed, the relative density $d_{\mathbb{P}}(S_{m,d,r})$ exists and does not depend on $r$.
\end{lemma}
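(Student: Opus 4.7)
The plan is to adapt Lenstra's approach \cite{lenstra}: express the indicator of $p\in S_{m,d,r}$ as a Möbius combination of indicators of complete splitting in Kummer-type extensions, apply the GRH-conditional Chebotarev density theorem to each extension, and show that the resulting field degrees are independent of $r$.

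Concretely, using the equivalence noted just after \eqref{equ17} (that $p\in S_{m,d,r}$ iff $p\in S$, $md\mid \ord_p(a)$, and $ba^{-r}$ is a $2hm$-th power mod $p$) together with the Möbius inversion
\[
\mathbf{1}_{\ord_p(a) = (p-1)/(2h)} = \sum_{n\geq 1,\,\mu^2(n)=1} \mu(n)\, \mathbf{1}_{a \text{ is a } 2hn\text{-th power mod } p,\ 2hn\mid p-1},
\]
one rewrites
\[
\mathbf{1}_{p \in S_{m,d,r}} = \sum_{n\ge 1,\,\mu^2(n)=1} \mu(n)\, \mathbf{1}_{p \text{ splits completely in } L_n^{(r)}},
\]
where
\[
L_n^{(r)} := \mathbb{Q}\bigl(\zeta_{N(n)},\, a^{1/(2hn)},\, (ba^{-r})^{1/(2hm)}\bigr),\qquad N(n) := \lcm(4|ab|h,\, 2hn,\, 2hmd).
\]
Lemma~\ref{lem:chebotarev} then gives, for each fixed $n$, the natural density of primes splitting completely in $L_n^{(r)}$ as $1/[L_n^{(r)}:\mathbb{Q}]$ with a power-saving error. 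Truncating the outer sum at $n\le (\log x)^A$ and bounding the tail by a Brun--Titchmarsh-type estimate (noting that $a$ being a $2hn$-th power mod $p$ forces $2hn\mid p-1$) yields
\[
d_{\mathbb{P}}(S_{m,d,r}) = \sum_{n\ge 1,\,\mu^2(n)=1} \frac{\mu(n)}{[L_n^{(r)}:\mathbb{Q}]}.
\]

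It remains to show that $[L_n^{(r)}:\mathbb{Q}]$ does not depend on $r$ for every squarefree $n$, and this is the main obstacle. Via the tower $\mathbb{Q} \subseteq E_n := \mathbb{Q}(\zeta_{N(n)}, a^{1/(2hn)}) \subseteq L_n^{(r)}$, only the top degree can depend on $r$, and by Kummer theory $[L_n^{(r)}:E_n]=2hm/k_n(r)$, where $k_n(r)$ is the largest divisor of $2hm$ with $ba^{-r} \in (E_n^*)^{k_n(r)}$. I plan to analyze $k_n(r)$ one prime $\ell\mid 2hm$ at a time. For $\ell\mid 2hn$, $a$ is already an $\ell$-th power in $E_n$, so $ba^{-r}$ is an $\ell$-th power iff $b$ is, which does not depend on $r$. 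For $\ell\nmid 2hn$, the Kummer-theoretic identity $(E_n^*)^\ell \cap \mathbb{Q}(\zeta_{N(n)})^* = (\mathbb{Q}(\zeta_{N(n)})^*)^\ell$ (valid since $\gcd(\ell,[E_n:\mathbb{Q}(\zeta_{N(n)})])=1$, because the minimal polynomial over $\mathbb{Q}(\zeta_{N(n)})$ of any $\ell$-th root of a rational has degree dividing $\ell$ and also dividing $[E_n:\mathbb{Q}(\zeta_{N(n)})]$) reduces the question to whether $ba^{-r}$ is an $\ell$-th power inside the cyclotomic field $\mathbb{Q}(\zeta_{N(n)})$. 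The hypothesis $2|ab|\mid m$ forces $4|ab|h\mid N(n)$, hence $\sqrt{a},\sqrt{b}\in \mathbb{Q}(\zeta_{N(n)})$, which makes $ba^{-r}$ an automatic square there for every $r$; on the other hand, for any odd prime $\ell\ge 3$, neither $a$ nor $b$ can be an $\ell$-th power in the abelian field $\mathbb{Q}(\zeta_{N(n)})$ (since otherwise $\mathbb{Q}(\zeta_{N(n)},\alpha^{1/\ell})/\mathbb{Q}$ would be abelian despite containing the non-abelian extension $\mathbb{Q}(\alpha^{1/\ell})$), and using $(a,b)=1$ the same holds for every $ba^{-r}$. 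Thus $k_n(r)$ depends only on $n$, which proves the lemma.

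The most delicate point is the Kummer-theoretic bookkeeping in the final step, in particular handling the prime $\ell=2$ (where the quadratic subfields of $\mathbb{Q}(\zeta_N)$ can absorb a lot of the $2$-power structure) and the classical irreducibility quirks of $X^k-c$ when $4\mid k$, exactly as in Question~\ref{conj1}. These are precisely the subtleties that pervade Lenstra's proof of Artin's primitive root conjecture, and the hypothesis $2|ab|\mid m$ is tailored to make all of the power-detection computations uniform in~$r$.
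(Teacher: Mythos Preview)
Your overall strategy coincides with the paper's: both reduce (via Lenstra's inclusion--exclusion under GRH) to showing that the degrees $[L_n^{(r)}:\mathbb{Q}]$ are independent of $r$, and your setup of the fields $L_n^{(r)}$ and the tower through $E_n=\mathbb{Q}(\zeta_{N(n)},a^{1/(2hn)})$ is correct.

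The gap is in the final Kummer-theoretic step. Writing $[L_n^{(r)}:E_n]=2hm/k_n(r)$, the integer $k_n(r)$ is determined by the valuations $v_\ell(k_n(r))$ for each prime $\ell\mid 2hm$, that is, by whether $ba^{-r}\in(E_n^*)^{\ell^j}$ for \emph{every} $1\le j\le v_\ell(2hm)$. Your two cases address only $j=1$. In the range $v_\ell(2hn)<j\le v_\ell(2hm)$ neither argument applies: $a$ is an $\ell^{v_\ell(2hn)}$th power in $E_n$ but not an $\ell^{v_\ell(2hn)+1}$th power, so the ``$ba^{-r}$ is a power iff $b$ is'' reduction fails, and the descent $(E_n^*)^{\ell}\cap\mathbb{Q}(\zeta_{N(n)})^*=(\mathbb{Q}(\zeta_{N(n)})^*)^{\ell}$ also fails because now $\ell\mid[E_n:\mathbb{Q}(\zeta_{N(n)})]$. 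This problematic range is nonempty in essentially every instance: for $\ell=2$ and $n$ odd one has $v_2(2hn)=1+v_2(h)<1+v_2(h)+v_2(m)=v_2(2hm)$ (since $2\mid m$), so your argument never pins down $v_2(k_n(r))$.

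The paper closes this by proving the exact degree formula
\[
[\mathbb{Q}(\zeta_t,a^{1/s},(ba^{-r})^{1/(2hm)}):\mathbb{Q}]=\phi(t)\,\tfrac{ms}{2}
\]
via linear independence of radicals: one checks that $a^{e/s}(ba^{-r})^{f/(2hm)}\in\mathbb{Q}(\zeta_t)$ with $0\le e<s/2$, $0\le f<m$ forces $e=f=0$, which follows from $(a,b)=1$, $a$ not a perfect power, and $h$ being the exact power order of $|b|$. This computes $[L_n^{(r)}:E_n]=m$ (equivalently $k_n(r)=2h$) in one stroke, and is precisely what is needed to complete your argument.
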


Before proving these lemmas, we make some preliminary considerations.

We build on the arguments of Lenstra in~\cite{lenstra}. Let \begin{align*}
F_{m, d, r} := \mathbb{Q}(\zeta_{|4abh|}, \zeta_{2hmd}, a^{1/2h}, b^{1/2h}, (ba^{-r})^{1/2hm}).    
\end{align*} 
Clearly if $m\equiv 0\pmod{|2ab|}$, then $\zeta_{|4abh|} \in \mathbb{Q}(\zeta_{2hmd})$. Since $|b|$ is a perfect $h$th power, $|b|^{1/2h} \in \mathbb{Q}(\zeta_{|4b|})$ (indeed, if $c=|b|^{1/h}$, then $\sqrt{c}\in \mathbb{Q}(\zeta_{|4c|})\subset \mathbb{Q}(\zeta_{|4b|})$). Furthermore, $b^{1/2h}$ is either $|b|^{1/2h}$ or $\zeta_{4h}^j|b|^{1/2h}$ for some $j$. In either case, 
$$F_{m, d, r} = \mathbb{Q}(\zeta_{2hmd}, a^{1/2h}, (ba^{-r})^{1/2hm}).$$
For a prime $\ell$, define 
\begin{align*}
q(\ell) := \min\{\alpha\geq 1:\ell^{\alpha}\nmid 2h\}
\end{align*}
to be the smallest power of $\ell$ which does not divide $2h$, and let 
$$L_{\ell} := \mathbb{Q}(\zeta_{q(\ell)}, a^{1/q(\ell)}).$$
We can now characterize the set $S_{m,d,r}$ as follows.

\begin{lemma}
A large enough prime $p$ satisfies $p \in S_{m, d, r}$ if and only if $p$ splits completely in $F_{m, d, r}$ but not in any of the $L_{\ell}$. 
\end{lemma}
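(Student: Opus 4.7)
The plan is to verify that each of the defining conditions for $p\in S_{m,d,r}$ translates via cyclotomic and Kummer theory into a splitting condition in an appropriate subfield of $F_{m,d,r}$, and conversely. The main tool is the classical fact that for $p$ unramified, complete splitting of $p$ in $\mathbb{Q}(\zeta_n,c^{1/n})$ is equivalent to $p\equiv 1\pmod n$ together with $c$ being an $n$th power mod $p$; the qualifier ``$p$ large enough'' just discards the finitely many ramified primes. I would treat the two implications separately, verifying the conditions (a) $p\equiv 1\pmod{4|ab|h}$, (b) $a,b$ are $2h$th powers mod $p$, (c) $\ord_p(a)=(p-1)/(2h)$, (d) $md\mid \ord_p(a)$ and $\ell(p)\equiv r\pmod m$ in turn.

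For the direction $p\in S_{m,d,r}\Rightarrow p$ splits in $F_{m,d,r}$ but in no $L_\ell$: conditions (a) and (c)+(d) give $p\equiv 1\pmod{\lcm(4|ab|h,2hmd)}$, so $p$ splits in the cyclotomic parts of $F_{m,d,r}$. Condition (b) yields splitting in $\mathbb{Q}(\zeta_{2h},a^{1/2h})$ and $\mathbb{Q}(\zeta_{2h},b^{1/2h})$. Condition (c) forces $\langle a\rangle$ to coincide with the subgroup of $2h$th powers in $(\mathbb{Z}/p)^*$, so the congruence $a^{\ell(p)}\equiv b\pmod p$ with $\ell(p)\equiv r\pmod m$ gives $ba^{-r}\equiv a^{jm}\equiv (a^{1/(2h)})^{2hjm}\pmod p$, making $ba^{-r}$ a $2hm$th power mod $p$, so $p$ splits in $\mathbb{Q}(\zeta_{2hm},(ba^{-r})^{1/2hm})$. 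Since splitting completely in each Galois subextension yields splitting in their compositum, $p$ splits in $F_{m,d,r}$. Condition (c) also says $[(\mathbb{Z}/p)^*:\langle a\rangle]=2h$, whence $a$ is not a $q(\ell)$th power mod $p$ for any prime $\ell$ (as $q(\ell)$ strictly exceeds the $\ell$-part of $2h$), so $p$ does not split completely in any $L_\ell$.

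For the reverse direction, splitting in the cyclotomic parts of $F_{m,d,r}$ gives (a) and $p\equiv 1\pmod{2hmd}$ directly. Splitting in $\mathbb{Q}(\zeta_{2h},a^{1/2h})\subset F_{m,d,r}$ gives that $a$ is a $2h$th power mod $p$, and likewise for $b$ since $b^{1/2h}$ sits in $F_{m,d,r}$ by construction; this proves (b). For (c), ``$a$ is a $2h$th power'' forces $2h\mid [(\mathbb{Z}/p)^*:\langle a\rangle]$; on the other hand, if some prime $\ell$ had $v_\ell(\text{index})>v_\ell(2h)$, then $q(\ell)$ would divide the index, so $a$ would be a $q(\ell)$th power mod $p$ and $q(\ell)\mid p-1$, forcing $p$ to split completely in $L_\ell$, contradicting the hypothesis. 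Hence the index equals $2h$, giving (c). Finally, (d) follows: $p\equiv 1\pmod{2hmd}$ combined with $\ord_p(a)=(p-1)/(2h)$ gives $md\mid \ord_p(a)$, and from $ba^{-r}$ being a $2hm$th power mod $p$ together with $\langle a\rangle$ being the subgroup of $2h$th powers, we conclude $ba^{-r}\equiv a^{jm}\pmod p$ for some $j$, i.e.\ $\ell(p)\equiv r\pmod m$.

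The main obstacle, in my view, is the clean equivalence between ``$p$ does not split in any $L_\ell$'' and ``$\ord_p(a)=(p-1)/(2h)$'' (under the assumption that $a$ is a $2h$th power), which rests on the observation that $q(\ell)$ captures precisely the primary obstruction to the index $[(\mathbb{Z}/p)^*:\langle a\rangle]$ being as small as possible — this is the key insight of Lenstra's treatment of the Artin primitive root conjecture that we are adapting. A secondary care point is ensuring that ``$b$ is a $2h$th power mod $p$'' can be read off from splitting in $F_{m,d,r}$: here the inclusion of $b^{1/2h}$ among the generators of $F_{m,d,r}$ is crucial, together with the preliminary observation that $|b|$ being a perfect $h$th power places $b^{1/2h}$ inside a small enough cyclotomic-radical extension.
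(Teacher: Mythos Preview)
Your proof is correct and follows essentially the same route as the paper, translating each defining condition of $S_{m,d,r}$ into a splitting criterion via standard Kummer and cyclotomic theory; your index--valuation argument for condition (c) is in fact more explicit than the paper's terse assertion that ``the first and last condition are equivalent to $p\in S$.'' One small imprecision: in the forward direction, the claim that $a$ is not a $q(\ell)$th power mod $p$ for \emph{every} prime $\ell$ is false when $\ell\nmid p-1$ (then every residue is a $q(\ell)$th power), but your conclusion that $p$ does not split in $L_\ell$ still holds since $p\not\equiv 1\pmod{q(\ell)}$ in that case---the paper handles this by restricting to $\ell\mid p-1$.
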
 

\begin{proof}
We recall a few simple facts from Galois theory. Firstly, by Dedekind's factorization theorem, an unramified prime $p$ splits completely in a field extension $\mathbb{Q}(\alpha)$ if and only if the minimal polynomial $f_{\alpha}(x)\in \mathbb{Q}[x]$ of $\alpha$ factorizes into distinct linear factors $\pmod p$ for $p$ large enough. Secondly, $p$ splits completely in $\mathbb{Q}(\alpha, \beta)$ if and only if $p$ splits completely in both $\mathbb{Q}(\alpha)$ and $\mathbb{Q}(\beta)$. Thirdly, the cyclotomic polynomial $\Phi_n(x)$ factorizes into distinct linear factors $\pmod p$ if and only if $p\equiv 1\pmod n$.

From these three facts and the well-known fact that the compositum of Galois extensions is Galois, we see that a large prime $p$ splits completely in $F_{m, d, r}$ but not in any of the $L_{\ell}$ if and only if
$a$ is a $2h$th power $\pmod p$, $ba^{-r}$ is a $2hm$th power $\pmod p$, $p\equiv 1\pmod{2hmd}$ and $a$ is not a $q(\ell)$th power $\pmod p$ for any $\ell\mid p-1$. The first and last condition are equivalent to $p \in S$, and together with the second and third condition this is equivalent to $p \in S_{m, d, r}$, so the claim follows (note that the special case $m = d = 1, r = 0$ gives a characterization of the set $S$).
\end{proof}

For $n$ a squarefree integer, let $L_n$ be the compositum of $L_{\ell}$ for primes $\ell \mid n$ and let
\begin{align*}
q(n):=\prod_{\substack{\ell\mid n\\\ell \in \mathbb
{P}}}q(\ell),\quad q(1):=1.    
\end{align*}
Define 
\begin{align*}
a_n := \frac{|\{\sigma\in \textnormal{Gal}(F_{m,d,r}L_n/\mathbb{Q}):\,\, \sigma\,\, \textnormal{fixes}\,\, F_{m,d,r},\,\, \forall \ell\mid n:\,\, \sigma\,\, \textnormal{does not fix}\,\, L_{\ell}\}|}{[F_{m,d,r}L_nm:\mathbb{Q}]}.    
\end{align*}
One would now expect that 
\begin{align}\label{equ4}
d_{\mathbb{P}}(S_{m, d, r})=\lim_{k\to \infty}a_{n_k}, 
\end{align}
where $n_k$ is the product of the first $k$ primes. This is because $a_{n_k}$ is by the Chebotarev density theorem equal to the density of primes $p$ which split completely in $F_{m, d, r}$ but in none of $L_{\ell}, \ell \mid n_k$. Formula~\eqref{equ4} was indeed proved by Lenstra~\cite[Theorem 3.1]{lenstra} under GRH. Thus, in particular, the density $d_{\mathbb{P}}(S_{m, d, r})$ exists.

Another relevant result of Lenstra concerns whether $d_{\mathbb{P}}(S_{m, d, r})$ equals to $0$ or not. By~\eqref{equ4}, this corresponds to understanding the behavior of the local densities $a_n$. Clearly $a_n\leq a_m$ for $m\mid n$, so if $a_n = 0$ for some $n$, then $d_{\mathbb{P}}(S_{m, d, r}) = 0$. In fact, the converse is true as well, as shown by Lenstra~\cite[Theorem 4.1]{lenstra}: if $a_n>0$ for all $n$, then $d_{\mathbb{P}}(S_{m, d, r})$ is strictly positive. The idea of Lenstra's proof is that the conditions of $p$ not splitting completely in $L_{\ell}$ are independent of each other and of the condition of $p$ splitting completely in $F_{m, d, r}$, except for some finite set of primes $\ell$. In other words, $L_nF_{m, d, r}$ and $L_{\ell}$ are linearly disjoint extensions for $\ell \nmid n$ and $\ell$ large enough. This allows one to write the density in~\eqref{equ4} as an infinite product
\begin{align}\label{equ4.1}
d_{\mathbb{P}}(S_{m, d, r}) = a_n \prod_{\substack{\ell \nmid n\\\ell \in \mathbb{P}}} \left(1 - \frac{1}{[L_{\ell} : \mathbb{Q}]}\right),
\end{align}
where $n$ is the product of exceptional primes and the terms $1 - \frac{1}{[L_{\ell} : \mathbb{Q}]}$ correspond to the density of primes not splitting completely in $L_{\ell}$ (see~\cite[(5.11)]{lenstra}). Note that the infinite product in equation~\eqref{equ4.1} converges to a strictly positive value, therefore giving both a method for determining whether $d_{\mathbb{P}}(S_{m, d, r}) = 0$ or not and a method for calculating this density.

We are now ready to prove our lemmas.

\begin{proof}[Proof of Lemma~\ref{lem:denPos}]
Consider the case $m = d = 1, r = 0$ above. By~\eqref{equ4.1}, it is sufficient to check that $a_n \neq 0$ for all $n$. Fix $n$ and consider the field
$$F_{1, 1, 0}L_n = \mathbb{Q}(\zeta_{|4abh|}, \zeta_{q(n)}, a^{1/2h}, a^{1/q(n)}).$$
Let $\sigma : F_{1, 1, 0}L_n \to F_{1, 1, 0}L_n$ be the automorphism which fixes $\zeta_{|4abh|}$, $\zeta_{q(n)}$ and $a^{1/2h}$, and which maps $a^{1/q(\ell)}$ to $\zeta_{q(\ell)}a^{1/q(\ell)}$ for all $\ell \mid n$. Assuming that this is well-defined, the result follows from the Chebotarev density theorem (since then $\sigma$ fixes $F_{1,1,0}$ but none of $L_{\ell}$). To prove that there exists such a map $\sigma$, let $K:= \mathbb{Q}(\zeta_{|4abh|}, \zeta_{q(n)}, a^{1/2h})$. It suffices to prove that extending $K$ by the elements $a^{1/q(\ell)}, \ell \mid n$ one by one always increases the degree of $K$ by a factor of $\ell$ (note that the degree increases by at most a factor of $\ell$). In other words, it suffices to prove that the degree of $a^{1/q(\ell)}$ over $K':= \mathbb{Q}(\zeta_{|4abh|}, \zeta_{q(n)}, a^{1/2h}, a^{1/q(n/\ell)})$ is $\ell$. 

To prove this we use the following simple observation (see~\cite[Proposition 3.5]{jarviniemi}): Let $a$ be a positive integer which is not a perfect power. For positive integers $s$ and $t$ with $s \mid t$, the degree of $\mathbb{Q}(\zeta_t, a^{1/s})/\mathbb{Q}(\zeta_t)$ is either $s$ or $s/2$, where the latter case occurs if and only if $\sqrt{a} \in \mathbb{Q}(\zeta_t)$.\footnote{The proof is by basic Galois theory. See~\eqref{equ4.2} below for the proof of a closely related statement.
}
This result will be used subsequently without further mention.

We now have
\begin{align*}
[K'(a^{1/q(\ell)}) : K']&= \frac{[K'(a^{1/q(\ell)}) : \mathbb{Q}(\zeta_{|4abh|}, \zeta_{q(n)})]}{[K' : \mathbb{Q}(\zeta_{|4abh|}, \zeta_{q(n)})]} \\
&= \frac{[\mathbb{Q}(\zeta_{|4abh|}, \zeta_{q(n)}, a^{1/2h}, a^{1/q(n)}) : \mathbb{Q}(\zeta_{|4abh|}, \zeta_{q(n)})]}{[\mathbb{Q}(\zeta_{|4abh|}, \zeta_{q(n)}, a^{1/2h}, a^{1/q(n/\ell)}) : \mathbb{Q}(\zeta_{|4abh|}, \zeta_{q(n)})]},
\end{align*}
where by the above observation the degrees in the numerator and denominator are ``what one would expect'', namely $\lcm(2h, q(n))/2$ and $\lcm(2h, q(n/\ell))/2$, respectively. Here dividing by $2$ accounts for the fact that $\sqrt{a} \in \mathbb{Q}(\zeta_{|4abh|})$. This concludes the proof of Lemma~\ref{lem:denPos}.
\end{proof}

\begin{proof}[Proof of Lemma~\ref{lem:denInv}]
We then prove Lemma~\ref{lem:denInv}.  Fix $m \equiv 0 \pmod{|2ab|}$ and $d$, and let $n$ be the product of exceptional primes over all $1\leq r\leq m$ so that~\eqref{equ4.1} is valid for all $r$. Let $a_n = a_n(r)$ be as in~\eqref{equ4.1}. Recall that $a_n(r)$ is equal to the density of elements $\sigma \in \Gal(F_{m, d, r}L_n/\mathbb{Q})$ fixing $F_{m, d, r}$ but not $L_{\ell}$ for any $\ell \mid n$. One may thus write, by inclusion-exclusion,
\begin{align}
\label{equ4.1.5}
a_n(r) = \frac{1}{[F_{m, d, r}L_n : \mathbb{Q}]} \sum_{k \mid n} \mu(k) [F_{m, d, r}L_k : \mathbb{Q}].
\end{align}
We now turn to proving that, for any fixed $k$, $[F_{m, d, r}L_k : \mathbb{Q}]$ does not depend on $r$. This follows from the following more general claim: Let $m, r, s$ and $t$ be positive integers satisfying $4abh\mid t, m\mid t, s \mid t$ and $2h \mid s$. We have
\begin{equation}
\label{equ4.2}
[\mathbb{Q}(\zeta_t, a^{1/s}, (ba^{-r})^{1/2hm}) : \mathbb{Q}] = \phi(t)\frac{ms}{2}.
\end{equation}
In particular the degrees of the extensions $F_{m, d, r}L_k/\mathbb{Q}$ do not depend on $r$.

Note that the degree in~\eqref{equ4.2} is at most $\phi(t)\frac{ms}{2}$. Indeed,
\begin{align*}
&[\mathbb{Q}(\zeta_t, a^{1/s}, (ba^{-r})^{1/2hm}) : \mathbb{Q}] \\
&= [\mathbb{Q}(\zeta_t) : \mathbb{Q}][\mathbb{Q}(\zeta_t, a^{1/s}) : \mathbb{Q}(\zeta_t)][\mathbb{Q}(\zeta_t, a^{1/s}, (ba^{-r})^{1/2hm}) : \mathbb{Q}(\zeta_t, a^{1/s})].
\end{align*}
The first term equals $\phi(t)$. The second one is at most $\frac{s}{2}$, as $\sqrt{a} \in \mathbb{Q}(\zeta_t)$. The third term is similarly at most $m$, as $(ba^{-r})^{1/2h} \in \mathbb{Q}(\zeta_t, a^{1/s})$, since $|b|^{1/2h}\in \mathbb{Q}(\zeta_{|4abh|})$.

We then prove that this bound is tight. We claim that the numbers
$$a^{e/s}(ba^{-r})^{f/2mh},$$
where $0 \le e < s/2$ and $0 \le f < m$, are linearly independent over $\mathbb{Q}(\zeta_t)$, which gives the lower bound for the degree implicit in~\eqref{equ4.2}. By~\cite{garrett} it suffices to check that
\begin{align}\label{equ30}
a^{e/s}(ba^{-r})^{f/2mh} \not\in \mathbb{Q}(\zeta_t)
\end{align}
for any such $e$ and $f$ not both $0$. Suppose that~\eqref{equ30} fails. Let
$$C := a^{2hem - rfs}b^{sf}.$$
Now $C$ is an integer for which $C^{1/2hms} \in \mathbb{Q}(\zeta_t)$. Thus, arguing as before, $|C|$ must be a perfect $hms$th power of a rational number. Since $a$ and $b$ are coprime, this means that
$$hms \mid 2hem - rfs$$
and
$$hms \mid hsf,$$
where we used the fact that $a$ is not a perfect power and $|b|$ is an $h$th power. The latter condition gives $m\mid f$, and since $0 \le f < m$, we have $f = 0$. Now the first condition gives $s \mid 2e$, so we must have $e = 0$. This concludes the proof of Lemma~\ref{lem:denInv}.
\end{proof}

\section{Applying the second moment method}
\label{sec:chungBound}

As before, let $\ell(p)$ be the smallest positive integer $l$ for which $a^{l}\equiv b\pmod{p}$ (whenever it exists).

Let the set $S$ be as constructed in Section~\ref{sec:equiLog}. For $p\in S$, let 
\begin{align*}
A_p:=\{n\leq x:\,\, n\equiv \ell(p)\pmod{\ord_p(a)}, \,\, n\neq \ell(p)\},    
\end{align*}
and let $\Pr$ denote the uniform probability measure on $[1,x]\cap \mathbb{N}$. Also denote $S_{\leq y}:=S\cap [1,y]$. By Lemma~\ref{lem:2moment}, in order to prove Theorem~\ref{thm:main} it suffices to prove the estimate
\begin{align}\label{equ5}
\frac{\sum_{p, q \in S_{\leq \sqrt{x}}} \Pr(A_p \cap A_q)}{\left(\sum_{p \in S_{\leq \sqrt{x}}} \Pr(A_p)\right)^2}=1+o(1).
\end{align}
By Lemma~\ref{lem:2moment}, the quotient above is always $\geq 1$, so it suffices to establish an upper bound of $\leq 1+o(1)$. Recalling that $\ord_p(a)=\frac{p-1}{2h}$ for $p\in S$, the denominator is
\begin{align}\label{equ6}
\left(\sum_{p\in S_{\leq \sqrt{x}}}\frac{1}{\ord_p(a)}+O\left(\frac{1}{x}\right)\right)^2=(1+o(1))4h^2d_{\mathbb{P}}(S)^2(\log \log x)^2   
\end{align}
by Lemma~\ref{lem:denPos} and partial summation.

The numerator can be written as
\begin{align*}
&\sum_{m \le \sqrt{x}} \sum_{\substack{p, q \in S_{\leq \sqrt{x}} \\ (\ord_p(a), \ord_q(a)) = m \\ m \mid \ell(p) - \ell(q)}} \left(\frac{m}{\ord_p(a)\ord_q(a)}+O\left(\frac{1}{x}\right)\right)\\
&=\sum_{m \le \sqrt{x}} \sum_{\substack{p, q \in S_{\leq \sqrt{x}} \\ (\ord_p(a), \ord_q(a)) = m \\ m \mid \ell(p) - \ell(q)}} \frac{m}{\ord_p(a)\ord_q(a)}+o(1).
\end{align*}
Note that $2ab \mid m$. Let $N$ be a large, fixed integer. By M\"obius inversion, we write
\begin{align*}
& \sum_{m \le \sqrt{x}} \sum_{\substack{p, q \in S_{\leq \sqrt{x}} \\ (\ord_p(a), \ord_q(a)) = m \\ m \mid \ell(p) - \ell(q)}} \frac{m}{\ord_p(a)\ord_q(a)}\\
&\le \sum_{m \le N} \sum_{\substack{p, q \in S_{\leq \sqrt{x}} \\ m\mid \ord_p(a),\ord_q(a)\\ m \mid \ell(p) - \ell(q)}} \frac{m}{\ord_p(a)\ord_q(a)} \sum_{d \mid (\ord_p(a), \ord_q(a))/m} \mu(d) \\
&+\sum_{m > N} \sum_{\substack{p, q \in S_{\leq \sqrt{x}} \\ m \mid \ord_p(a), \ord_q(a) \\ m \mid \ell(p) - \ell(q)}} \frac{m}{\ord_p(a)\ord_q(a)}\\
&= \sum_{m \le N} \sum_{d \le \sqrt{x}} \sum_{\substack{p, q \in S_{\leq \sqrt{x}} \\ md \mid \ord_p(a), \ord_q(a) \\ m \mid \ell(p)-\ell(q)}} \frac{m\mu(d)}{\ord_p(a)\ord_q(a)} \ +  \sum_{m > N} \sum_{\substack{p, q \in S_{\leq \sqrt{x}} \\ m \mid \ord_p(a), \ord_q(a) \\ m \mid \ell(p) - \ell(q)}} \frac{m}{\ord_p(a)\ord_q(a)}.
\end{align*}

The problem thus reduces to considering sums of the form
\begin{align*}
\sum_{\substack{p, q \in S_{\leq \sqrt{x}} \\ md \mid \ord_p(a), \ord_q(a) \\ m \mid \ell(p) - \ell(q)}} \frac{1}{\ord_p(a)\ord_q(a)} &= 
\sum_{r = 0}^{m-1} \Big(\sum_{\substack{p \in S_{\leq \sqrt{x}} \\ md \mid \ord_p(a) \\ \ell(p) \equiv r \pmod{m}}} \frac{2h}{p-1}\Big)^2\\
&=
4h^2 \sum_{r = 0}^{m-1} \Big(\sum_{\substack{p \le \sqrt{x}\\ p \in S_{m, d, r}}} \frac{1}{p-1}\Big)^2
\end{align*}
for various $d, m\leq \sqrt{x}$.

Define
$$\Sigma_{m, d, r} := \Big(\sum_{\substack{p \le \sqrt{x}\\ p \in S_{m, d, r}}} \frac{1}{p-1}\Big)^2$$
and
$$\Sigma_{m, d}:= \sum_{r = 0}^{m-1} \Sigma_{m, d, r}.$$
Now we have
$$\sum_{p, q \le x} \Pr(A_p \cap A_q) \le 4h^2 \left(\sum_{m \le N} \sum_{d \le \sqrt{x}} m\mu(d)\Sigma_{m, d} + \sum_{m > N} m\Sigma_{m, 1}\right).$$
In Sections~\ref{sec:gcdSmall} and~\ref{sec:gcdLarge}, we will prove that for any $N\geq 1$ and for $x\geq x_0(N)$ (where $x_0(N)$ is a fast enough growing function of $N$) we have
\begin{align}\label{equ7}
\sum_{m \le N} \sum_{d \le \sqrt{x}} m\mu(d)\Sigma_{m, d} = (1 + O(1/N))d_{\mathbb{P}}(S)^2(\log \log x)^2
\end{align}
and
\begin{align}\label{equ8}
\sum_{m > N} m\Sigma_{m, 1} \ll (\log \log x)^2/N
\end{align}
Combining~\eqref{equ6},~\eqref{equ7} and~\eqref{equ8} and letting $x \to \infty$ gives a bound of $\le 1 + O(1/N)$ for the right-hand side of~\eqref{equ5}, and letting $N\to \infty$ slowly in terms of $x$ gives Theorem~\ref{thm:main}. Thus, our remaining task in the proof of Theorem~\ref{thm:main} is proving~\eqref{equ7} and~\eqref{equ8}.

\section{Contribution of small gcd}
\label{sec:gcdSmall}

In this section, we prove the desired estimate~\eqref{equ7}. We handle the cases $md \le N^3$ and $md > N^3$ separately.

\subsection{Case 1. \texorpdfstring{$md \le N^3$}{md<N3}}

Let $\pi_{m, d, r}(x):= \sum_{p \le x, p \in S_{m, d, r}} 1$. By partial summation, we have
\begin{align*}
\Sigma_{m, d, r}^{1/2} &= \sum_{\substack{p \le \sqrt{x}\\ p \in S_{m, d, r}}} \frac{1}{p-1}\\
&=\frac{\pi_{m,d,r}(\sqrt{x})}{\sqrt{x}-1}+\int_{2}^{\sqrt{x}}\frac{\pi_{m,d,r}(t)}{(t-1)^2}\, dt.
\end{align*}
The first term here is $O(1/\log x)$. The second term can be bounded by splitting the integration interval into parts $[2,y_0(N)]$ and $[y_0(N),\sqrt{x}]$, where $y_0(N)$ is a fast growing function of $N$ such that
\begin{align*}
\pi_{m,d,r}(t)=(1+O(N^{-5}))d_{\mathbb{P}}(S_{m,d,r})\frac{t}{\log t} \quad \textnormal{for}\quad t\geq y_0(N),   
\end{align*}
uniformly for $m,d,r\leq N$; such a function exists by Lemma~\ref{lem:denInv}. We then have
\begin{align*}
\int_{2}^{y_0(N)}\frac{\pi_{m,d,r}(t)}{(t-1)^2}\, dt\ll \log \log y_0(N)    
\end{align*}
and 
\begin{align*}
\int_{y_0(N)}^{\sqrt{x}}\frac{\pi_{m,d,r}(t)}{(t-1)^2}\, dt&=\int_{y_0(N)}^{\sqrt{x}}\frac{(1+O(N^{-5}))d_{\mathbb{P}}(S_{m,d,r})t/\log t}{(t-1)^2}\, dt\\
&=(1+O(N^{-5}))d_{\mathbb{P}}(S_{m,d,r})\log \log x+O(1).    
\end{align*}

All in all, we have
$$\Sigma_{m, d, r}^{1/2} = (1+O(N^{-5}))d_{\mathbb{P}}(S_{m, d, r})\log \log x + O(\log \log y_0(N)).$$
Squaring gives
$$\Sigma_{m, d, r} = (1+O(N^{-5}))d_{\mathbb{P}}(S_{m, d, r})^2(\log \log x)^2 + O((\log \log x)(\log \log y_0(N))).$$
Summing over $r$ and using Lemma~\ref{lem:denInv} (which says that $d_{\mathbb{P}}(S_{m,d,r})$ is independent of $r$), we get
\begin{align}\label{equ8.1}
\Sigma_{m, d} = \sum_{r = 0}^{m-1} \Sigma_{m, d, r} = (1+O(N^{-5}))\frac{d_{\mathbb{P}}(S_{1,md,0})^2}{m} (\log \log x)^2 + O\left(m(\log \log x)^{1.1}\right).
\end{align}
We multiply this by $m\mu(d)$ and sum over $m$ and $d$ with $md \le N^3$. For $x$ large enough in terms of $N$, this gives
\begin{align*}
&\sum_{m \le N} \sum_{d \le \frac{N^3}{m}} m\mu(d)\Sigma_{m, d}  \\
&=\sum_{m \le N} \sum_{d \le \frac{N^3}{m}} \mu(d)(1 + O(N^{-5}))d_{\mathbb{P}}(S_{1,md,0})^2(\log \log x)^2+O(N^{3.1}(\log \log x)^{1.1}) \\
&= (\log \log x)^2\sum_{m \le N} \sum_{d \le \frac{N^3}{m}} \mu(d)d_{\mathbb{P}}(S_{1, md, 0})^2 + O\left(d_{\mathbb{P}}(S)^2N^{-1.9}\right) + O\left(N^{3.1}(\log \log x)^{1.1}\right) \\
&= (1 + O(N^{-1}))d_{\mathbb{P}}(S_1)^2(\log \log x)^2,
\end{align*}
where we used 
$$\sum_{dm\leq N^3}\mu(d)a_{dm}=\sum_{k\leq N^3}a_k\sum_{d\mid k}\mu(d)=a_1.$$

\subsection{Case 2. \texorpdfstring{$md > N^3$}{md>N3}}

Now we bound the contribution to~\eqref{equ7} from the terms $md>N^3$. Crude estimation gives
\begin{align*}
\sum_{r = 0}^{m-1} \Sigma_{m, d, r} &=
\sum_{r = 0}^{m-1} \Big(\sum_{\substack{p \le \sqrt{x}\\ p\in S_{m, d, r}}} \frac{1}{p-1} \Big)^2\\
&\leq \Big(\sum_{p \le x, p \equiv 1 \pmod{md}} \frac{1}{p-1}\Big)^2.
\end{align*}
We split the summation over $p$ into the intervals $[md,(md)^2]$ and $((md)^2,x]$. Applying the Brun--Titchmarsh inequality for $p\in [(md)^2,x]$ and a trivial estimate for $p\in [md,(md)^2]$ gives
\begin{align*}
\Big(\sum_{\substack{p \le x\\ p \equiv 1 \pmod{md}}} \frac{1}{p-1}\Big)^2 \ll \frac{(\log \log x)^2}{\phi(md)^2}+\Big(\frac{\log(md)}{md}\Big)^2.
\end{align*}
Since $m\leq N, md>N^3$ implies $m\leq (md)^{1/3}$, we can bound the contribution to~\eqref{equ7} from the terms $md>N^3$ by
\begin{align*}
&\sum_{\substack{md > N^3\\m\leq N}}m \left(\frac{(\log \log x)^2}{\phi(md)^2}+\frac{\log^2(md)}{(md)^2}\right) \\
&\ll (\log \log x)^2\sum_{k > N^3}k^{1/3}\left(\frac{\tau(k)}{\phi(k)^2}+\frac{\tau(k)\log^2 k}{k^2}\right)\\
&\ll (\log \log x)^2/N,
\end{align*}
which is the desired bound.

\section{Contribution of large gcd}
\label{sec:gcdLarge}

We are now left with the task of proving~\eqref{equ8}. Define 
\begin{align*}
S_{m,d,r}':=\{p\in \mathbb{P}:\,\, p \equiv 1 \pmod{2hmd},\,\, ba^{-r}\equiv z^{2hm}\pmod{p}\,\, \textnormal{for some}\,\,\, z\}.    
\end{align*}
Furthermore, let $T_m$ be the set of primes $p$ for which $p \equiv 1 \pmod{2hm}$ and for which $a$ is not a $2hq$th power $\pmod{p}$ for any prime $q \mid m$. We now see that $S_{m, d, r}\subset S_{m, d, r}' \cap T_m$, where these larger sets $S_{m, d, r}'$ and $T_m$ are easier to control.

We estimate
\begin{align*}
\sum_{r = 0}^{m-1} \Sigma_{m, 1, r} &\ll
\sum_{r = 0}^{m-1} \Big(\sum_{\substack{p \le \sqrt{x}\\p \in S_{m, 1, r}' \cap T_m}} \frac{1}{p} \Big)^2.    
\end{align*}
Call the expression inside the $r$ sum $\Sigma_{m, r}'$. We divide the sum $\Sigma_{m, r}'$ into three parts depending on the relative size of $p$ and $m$ as follows. Set $B = 10$. We have
\begin{align*}
\Sigma_{m, r}' = \Big(\sum_{\substack{p \le \sqrt{x}\\ p \in S_{m, 1,r}' \cap T_m}}  \frac{1}{p}\Big)^2\ll \Big(\sum_{\substack{m\leq p \le mF(m)\\ p \in S_{m, 1,r}' \cap T_m}} \frac{1}{p}\Big)^2 + \Big(\sum_{\substack{mF(m)\leq p \le m^B\\ p \in S_{m, 1,r}' \cap T_m}} \frac{1}{p}\Big)^2+ \Big(\sum_{\substack{m^B \le p \le \sqrt{x}\\ p \in S_{m, 1,r}'}} \frac{1}{p}\Big)^2,
\end{align*}
where
\begin{align*}
F(m):=\exp((\log m)^{1/2}).    
\end{align*}
Denote the sums on the right by $\Sigma_{m, r, 1}'$, $\Sigma_{m, r, 2}'$, $\Sigma_{m, r, 3}'$, respectively. In the following subsections we prove that the sum of $\Sigma_{m, r, i}'$ over all $m$ and $r$ is small enough for each $i\in \{1,2,3\}$.

\subsection{Bounds for very large gcd}
\label{sec:primeSmall}

We begin by estimating $\Sigma_{m,r,1}'$ on average over $r$ and $m$. This sum will be dealt with using Selberg's sieve, and in particular this part of the proof is unconditional. We have
\begin{align}\label{e5}\begin{split}
\sum_{r=0}^{m-1}\Big(\sum_{\substack{m\leq p\leq mF(m)\\ p \in S_{m, 1, r}' \cap T_m}}\frac{1}{p}\Big)^2& \ll \Big(\sum_{\substack{m \le p \le mF(m) \\ p \equiv 1 \pmod{m}}} \frac{1}{p}\Big)^2 \\
&= \sum_{\substack{m\leq p,q\leq mF(m)\\p\equiv q\equiv 1 \pmod{m}}}\frac{1}{pq}.
\end{split}
\end{align}

We claim that for any $M\gg 1$, for $m\in [M,2M]$ and $M\leq P\leq Q\leq M^{2}$, we have
\begin{align}\label{e111}
\sum_{\substack{P\leq p\leq 2P\\Q\leq q\leq 2Q\\p\equiv q\equiv 1 \pmod{m}}}\frac{1}{pq}\ll \frac{1}{M(\log M)^2}+\frac{1_{P/2\leq Q\leq 2P}}{Q\log Q}.  
\end{align}
Once this has been shown, multiplying by $m$ and summing over $P$ and $Q$ dyadically  we can upper bound the contribution of $M>N$ by
\begin{align*}
\sum_{m > N} m\sum_{r = 0}^{m-1} \Sigma_{m, r, 1}'\ll  \sum_{\substack{N<M\leq \sqrt{x}\\M=2^{j}}}\left(\frac{(\log F(2M))^2}{(\log M)^2}+\frac{1}{\log M}\right)
\ll \sum_{\substack{N<M\leq \sqrt{x}\\M=2^{j}}}\frac{1}{\log M}\ll \log \log x
\end{align*}
by our choice $F(m)=\exp((\log m)^{1/2})$. This is certainly $o((\log \log x)^2)$.

To prove~\eqref{e111}, we first note that the contribution of the terms $p=q$ to that sum is $\ll 1/(Q\log Q)$, so it suffices to prove a bound of $\ll 1/(M(\log M)^2)$ for the off-diagonal contribution. Also observe that if $p\equiv 1\pmod m$, then $p=am+1$ for some $P/M\leq a\leq 2P/M$. Thus, the contribution of $p\neq q$ to~\eqref{e111} is
\begin{align*}
\ll  \frac{1}{PQ}\sum_{\substack{P/M\leq a\leq 2P/M\\Q/M\leq b\leq 2Q/M,\,b\neq a}}|\{m\leq 2M:\,\, am+1,bm+1\in \mathbb{P}\}|.   
\end{align*}
By Selberg's sieve, in the form of~\cite[Theorem 6.4]{iwaniec-kowalski} (see also formulas (6.83)--(6.86) there), this is further bounded by
\begin{align}\label{e112}
\ll \frac{1}{PQ}\sum_{\substack{P/M\leq a\leq 2P/M\\Q/M\leq b\leq 2Q/M,\,b\neq a}}\frac{M}{(\log M)^2}\prod_{p\mid ab(a-b)}\left(1+\frac{1}{p}\right).     
\end{align}
Note that by Cauchy--Schwarz for any $0\leq h\leq y$ we have  
\begin{align}\label{e116}
\sum_{n\leq y}\frac{n}{\phi(n)}\cdot \frac{n+h}{\varphi(n+h)}\leq\left(\sum_{n\leq y}\left(\frac{n}{\phi(n)}\right)^2\right)^{1/2}\left(\sum_{n\leq y}\left(\frac{n+h}{\phi(n+h)}\right)^2\right)^{1/2}\ll y.     
\end{align}
Therefore,~\eqref{e112} is at most
\begin{align*}
\ll \frac{1}{PQ}\cdot \frac{P}{M}\cdot \frac{Q}{M}\cdot \frac{M}{(\log M)^2}\ll \frac{1}{M(\log M)^2},    
\end{align*}
as wanted.

\subsection{Bounds for medium gcd}
\label{sec:primeLarge}

We then deal with the sums $\Sigma_{m,r,3}'$ for which we assume GRH (but not Hypothesis~\ref{hyp}). We divide $[m^B, \sqrt{x}]$ into dyadic intervals $[y/2, y]$ to obtain
\begin{align}\label{equ31}
\Sigma_{m, r, 3}' &\ll \Big(\sum_{\substack{m^{B}\leq y\leq \sqrt{x}\\y=2^j}}\frac{1}{y} \sum_{\substack{p \in [y/2, y]\\ p \in S_{m, 1, r}'}} 1\Big)^2.
\end{align}
Note then that $p\in S_{m,1,r}'$ implies $p\equiv 1\pmod m$ and that $ba^{-r}$ is an $m$th power $\pmod p$, so by Galois theory $p$ splits completely in $L:=\mathbb{Q}(\zeta_m,(ba^{-r})^{1/m})$. Thus the Artin symbol $\left(\frac{L/\mathbb{Q}}{p}\right)$ is equal to $1$. 

Now we are in a position to apply the Chebotarev density theorem (Lemma~\ref{lem:chebotarev}). Under GRH, it gives
\begin{align*}
\pi_C(y)=\frac{|C|}{[L:\mathbb{Q}]}\textnormal{Li}(y)+O\left(\frac{|C|}{[L:\mathbb{Q}]}\sqrt{y}(\log \textnormal{disc}(L/\mathbb{Q})+[L:\mathbb{Q}]\log y)\right),    
\end{align*}
where $C$ is any conjugacy class of $\Gal(L/\mathbb{Q})$. In the case of the specific extension $L$ above, we easily see that  $[L:\mathbb{Q}]\gg m\phi(m)$ (see Lemma~\ref{lem:indBound}), and also by~\cite[Lemma 1.2]{cooke} we have $\log \textnormal{disc}(L/\mathbb{Q})\ll m^2\log m$, so if $C$ is the identity of $\Gal(L/\mathbb{Q})$, we get
\begin{align*}
\pi_C(y)\ll \frac{1}{m\phi(m)}\textnormal{Li}(y)\quad \textnormal{for}\quad y\geq m^{4.1}.    
\end{align*}

Therefore,~\eqref{equ31} is
\begin{align*}
&\ll \Big(\sum_{\substack{m^B\leq y\leq \sqrt{x}\\y=2^j}} \frac{1}{y} \cdot \frac{y}{(\log y)m \phi(m)}\Big)^2\ll \frac{(\log \log x)^2}{(m\phi(m))^2}.
\end{align*}
Summing this over all $0\leq r\leq m-1$, multiplying by $m$ and summing over $m > N$ gives
$$\sum_{m > N} m\sum_{r = 0}^{m-1} \Sigma_{m, r, 3}' \ll \sum_{m > N}\frac{(\log \log x)^2}{\phi(m)^2}\ll \frac{(\log \log x)^2}{N}.$$
This is a good enough estimate.

\subsection{Bounds for large gcd}
\label{sec:primeMedium}

We then turn to estimating $\Sigma_{m, r, 2}'$ on average over $r$ and $m$. This is the only part of our proof where Hypothesis~\ref{hyp} is used.

Let
\begin{align}\label{e103}
\mathcal{M}:=\{N\leq m\leq \sqrt{x}:\,\, \exists\,\, m'\mid m: m'\in [(\log m)^{\epsilon'},m^{\epsilon}]\cap \mathbb{P}\},
\end{align}
where $\epsilon > 0$ is small but fixed and $\epsilon'=\epsilon'(N)$ is chosen to be small enough in terms of $N$. For each $m \in \mathcal{M}$ we let $m'$ denote such a divisor of $m$.

We wish to estimate the quantity
\begin{align}\label{e90}
\Sigma_{m,r,2}'&= \Big(\sum_{\substack{mF(m)\leq p\leq m^B\\p\in S'_{m,1,r}\cap T_m}}\frac{1}{p}\Big)^2.
\end{align}

We split
\begin{align}\label{e91}
&\sum_{M\leq m\leq 2M}\sum_{r=0}^{m-1} \Sigma_{m,r,2}'\nonumber\\
&=  \sum_{\substack{M\leq m\leq 2M\\m\in \mathcal{M}}}\sum_{r=0}^{m-1}\Big(\sum_{\substack{mF(m)\leq p\leq m^{B}\\p\in S_{m, 1, r}' \cap T_m}}\frac{1}{p}\Big)^2+\sum_{\substack{M\leq m\leq 2M\\m\not \in \mathcal{M}}}\sum_{r=0}^{m-1}\Big(\sum_{\substack{mF(m)\leq p\leq m^{B}\\p\in S_{m, 1, r}' \cap T_m}}\frac{1}{p}\Big)^2, 
\end{align}

We now present a general bound for the contribution of a single $m$ to the first sum in~\eqref{e91}. This is the only part of the paper where Hypothesis~\ref{hyp} is put into use.

\begin{lemma}\label{lem:hypothesis}
Assume Hypothesis~\ref{hyp}. Let $P\geq 2$, and let $m\in [M,2M]$ and $m'\in \mathbb{P}$ with $m'\mid m$ and $m'\in [(\log m)^{\epsilon'},m^{\epsilon}]$. Then, provided that $P\geq mF(m)$, we have 
\begin{align*}
\sum_{r=0}^{m-1}\Big(\sum_{\substack{P\leq p\leq 2P\\p\in S'_{m,1,r}\cap T_m}}1\Big)^2\ll_{\epsilon',\epsilon}  \frac{P^2}{\phi(m)^2(\log P)^{2}}\max\left\{\frac{1}{(m')^{\epsilon}},\frac{1}{(\log P)^2}\right\}. 
\end{align*}
\end{lemma}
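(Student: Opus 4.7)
The plan is to reduce the sum to Conjecture~\ref{hyp} via a disjointness argument. Write
\[
N_r:=\big|\{p\in[P,2P]:p\in S'_{m,1,r}\cap T_m\}\big|,\qquad M_{r'}:=\big|\{p\in[P,2P]:p\text{ splits in }L_{r'}\}\big|,
\]
where $L_{r'}:=\mathbb{Q}(\zeta_m,(ba^{-r'})^{1/m'})$ for $r'\in[0,m'-1]$. If $p\in S'_{m,1,r}\cap T_m$, then $ba^{-r}$ is a $2hm$-th power modulo $p$ and so in particular an $m'$-th power (as $m'\mid 2hm$); since $a^{m'}$ is trivially an $m'$-th power, $ba^{-(r\bmod m')}=ba^{-r}\cdot(a^{m'})^{(r-(r\bmod m'))/m'}$ is also an $m'$-th power, so $p$ splits in $L_{r\bmod m'}$. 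Consequently $N_r\leq M_{r\bmod m'}$ for each $r$.

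The key step is a disjointness claim: for $p\in T_m$ and $r_1\neq r_2\in[0,m-1]$ with $r_1\equiv r_2\pmod{m'}$, one cannot have $p\in S'_{m,1,r_1}\cap S'_{m,1,r_2}$. Indeed, such $p$ would force $a^{m'k}$ (with $k=(r_2-r_1)/m'$ and $0<|k|<M:=m/m'$) to be a $2hm$-th power modulo $p$. Writing $a\equiv g^{\alpha}\pmod p$ for a primitive root $g$ of $(\mathbb{Z}/p)^*$, this rewrites as
\[
\delta:=\frac{2hM}{\gcd(2hM,k)}\ \Big|\ \alpha.
\]
Since $\gcd(2hM,k)\leq|k|<M$, we have $\delta>2h$, so there exists a prime $q$ with $v_q(\delta)>v_q(2h)$, and such a $q$ necessarily divides $M$ (hence $m$). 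Tracking the $q$-adic valuation gives $v_q(\alpha)\geq v_q(2h)+1$, and a prime-by-prime check at the remaining primes (notably those dividing $2h$, where $k$ can interfere with the naive valuation estimate) produces $2hq\mid\alpha$, contradicting the defining property of $T_m$.

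Given the disjointness, the sets $\{S'_{m,1,r}\cap T_m\}_{r\equiv r'\pmod{m'}}$ are pairwise disjoint subsets of $\{p\in[P,2P]:p\text{ splits in }L_{r'}\}$, so $\sum_{r\equiv r'\pmod{m'}}N_r\leq M_{r'}$. Since the sum of squares of nonnegative reals is at most the square of their sum,
\[
\sum_{r=0}^{m-1}N_r^2=\sum_{r'=0}^{m'-1}\sum_{\substack{r\in[0,m-1]\\ r\equiv r'\pmod{m'}}}N_r^2\leq\sum_{r'=0}^{m'-1}\Bigl(\sum_{\substack{r\in[0,m-1]\\ r\equiv r'\pmod{m'}}}N_r\Bigr)^2\leq\sum_{r'=0}^{m'-1}M_{r'}^2.
\]
The assumption $P\geq mF(m)$ gives $2P\geq m\exp((\log m)^{1/2})$, so Conjecture~\ref{hyp} applies with $y=2P$ and yields
\[
\sum_{r'=0}^{m'-1}M_{r'}^2\ll_{\epsilon,\epsilon'}\frac{P^2}{\phi(m)^2(\log P)^2}\max\Bigl\{\frac{1}{(\log P)^2},\frac{1}{(m')^{\epsilon}}\Bigr\},
\]
which is the desired bound.

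The main technical obstacle is the disjointness step. While the inequality $v_q(\alpha)\geq v_q(2hq)$ at the distinguished prime $q$ is immediate, the corresponding inequality $v_{q'}(\alpha)\geq v_{q'}(2h)$ at other primes $q'$ can fail precisely when $q'\mid 2h$ and $v_{q'}(k)>v_{q'}(M)$; handling this requires either a judicious choice of $q$ among the primes dividing $\delta/2h$, or a more refined averaging argument that exploits the full strength of the $T_m$ condition (which excludes $a$ from being a $2hq$-th power for \emph{every} prime $q\mid m$, not just $q=m'$).
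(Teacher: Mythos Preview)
Your strategy---group $r\in[0,m-1]$ by residue modulo $m'$, use disjointness of the sets $S'_{m,1,r}\cap T_m$ within each class to pass from $\sum N_r^2$ to $\sum M_{r'}^2$, then invoke Conjecture~\ref{hyp}---is exactly the paper's. The paper in fact asserts the stronger disjointness $S'_{m,1,r}\cap S'_{m,1,r'}\cap T_m=\emptyset$ for \emph{all} $r\neq r'$, with the one-line justification that $a^{r-r'}$ being a $2hm$th power modulo $p$ forces $a$ to be a $2hq$th power for some prime $q\mid m$.

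You are right to flag a gap here, and the paper's terse justification has the same issue: writing $a\equiv g^{\alpha}$, from $2hm\mid\alpha d$ one only gets $v_q(\alpha)\ge v_q(2h)+1$ for some prime $q\mid m$, while the divisibilities $v_{q'}(\alpha)\ge v_{q'}(2h)$ at the remaining primes $q'\mid 2h$ can genuinely fail. (For instance $h=3$, $m=80$, $d=3$, $\alpha=160$ gives $480\mid\alpha d$ yet neither $12$ nor $30$ divides $\alpha$.) Neither of your suggested workarounds is needed, though. The clean fix is to observe that one may freely intersect $T_m$ with the set of primes $p$ for which $a$ is a $2h$th power modulo $p$: this is harmless because the sum being bounded ultimately comes from $p\in S_{m,1,r}\subset S$, and every $p\in S$ has this property by definition. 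With this extra condition, $\alpha=2h\beta$ for some integer $\beta$, the hypothesis $2hm\mid\alpha d$ becomes $m\mid\beta d$, and since $0<|d|<m$ one must have $\gcd(\beta,m)>1$; any prime $q$ dividing $\gcd(\beta,m)$ then gives $2hq\mid 2h\beta=\alpha$, contradicting $p\in T_m$. This yields the full disjointness (hence in particular your weaker version) with no case analysis, and the remainder of your argument goes through unchanged.
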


\begin{proof}
Note first that if $\ell(p) \equiv r \pmod{m}$, then $\ell(p) \equiv r \pmod{m'}$. Observe also that 
\begin{align}
\label{equ9.5}
S_{m, 1, r}' \cap S_{m, 1, r'}' \cap T_m = \emptyset
\end{align}
for any $r \neq r'$: if $p \in S_{m, 1, r}' \cap S_{m, 1, r'}'$, then $a^{r - r'}$ is a perfect $2hm$th power and thus $a$ must be a $2hq$th power for some prime $q \mid m$, implying $p \not\in T_m$. Also note the trivial inequality \begin{align}\label{e102}
x_1^2+\cdots+x_k^2\leq (x_1+\cdots +x_k)^2,\quad  x_i\geq 0.    
\end{align}
Using~\eqref{equ9.5} and~\eqref{e102}, we obtain
\begin{align*}
\sum_{r=0}^{m-1}\Big(\sum_{\substack{P\leq p\leq 2P\\p\in S'_{m,1,r}\cap T_m}}1\Big)^2  \ll \sum_{r=0}^{m'-1} \Big(\sum_{\substack{P \le p \le 2P \\ p \in T_{m, m', r}}} 1\Big)^2,
\end{align*}
where $T_{m, m', r}$ is the set of primes $p$ which split completely in $\mathbb{Q}(\zeta_{2hm}, a^{1/2h}, (ba^{-r})^{1/2hm'})$ but which do not split completely in $\mathbb{Q}(\zeta_{2hq}, a^{1/2hq})$ for any $q \mid m$. In particular,
\begin{align*}
T_{m,m',r}\subset T'_{m,m',r}:=\{p:\,\, p\,\, \textnormal{splits completely in}\,\, \mathbb{Q}(\zeta_m,(ba^{-r})^{1/m'})\}.    
\end{align*}
Hence, under Hypothesis~\ref{hyp}, we have
\begin{align*}
\sum_{r=0}^{m'-1}\Big(\sum_{\substack{P\leq p\leq 2P\\ p \in T_{m, m', r}}} 1\Big)^2
&\ll_{\epsilon',\epsilon} \frac{P^2}{\phi(m)^2(\log P)^{2}}\max\left\{\frac{1}{(m')^{\epsilon}},\frac{1}{(\log P)^2}\right\},
\end{align*}
which produces the assertion of the lemma.
\end{proof}

We apply Lemma~\ref{lem:hypothesis} to the sums on the right of~\eqref{e91}. Denote
\begin{align*}
D_{\leq y}(m):=\max_{\substack{d\in [1,y]\cap \mathbb{P}\\d\mid m}}d .   
\end{align*}

Splitting dyadically over $p$ and $m$, we see that
\begin{align}\label{e113}
&\sum_{\substack{M\leq m\leq 2M\\m\in \mathcal{M}}}\sum_{r=0}^{m-1}\Big(\sum_{\substack{mF(m) \le p \le m^B \\ p \in S'_{m, 1, r}\cap T_m}} \frac{1}{p}\Big)^2\nonumber\\
&\ll \sum_{\substack{M\leq m\leq 2M\\m\in \mathcal{M}}}\sum_{\substack{MF(M)\leq P,Q\leq (2M)^{B}\\P=2^{j_1},Q=2^{j_2}}}\sum_{r=0}^{m'-1}\Big(\sum_{\substack{P\leq p\leq 2P\\p\in \mathcal{T}_{m,m',r}}}\frac{1}{P}\Big)\Big(\sum_{\substack{Q\leq q\leq 2Q\\q\in \mathcal{T}_{m,m',r}}}\frac{1}{Q}\Big)\nonumber\\
&\ll \sum_{\substack{M\leq m\leq 2M\\m\in \mathcal{M}}}\sum_{\substack{MF(M)\leq P,Q\leq (2M)^{B}\\P=2^{j_1},Q=2^{j_2}}}\sum_{r=0}^{m'-1}\Big(\sum_{\substack{P\leq p\leq 2P\\p\in \mathcal{T}_{m,m',r}}}\frac{1}{P}\Big)^2\nonumber\\
&\ll\sum_{\substack{M\leq m\leq 2M\\m\in \mathcal{M}}}\left(\frac{1}{\phi(m)^2(\log M)^2}+\frac{1}{\phi(m)^2D_{\leq m^{\epsilon}}(m)^{\epsilon}}\right)\nonumber\\
&\ll_{\epsilon', \epsilon} \frac{1}{M(\log M)^{2}}+\sum_{\substack{M\leq m\leq 2M\\m\in \mathcal{M}}}\frac{1}{\phi(m)^2D_{\leq m^{\epsilon}}(m)^{\epsilon}}.
\end{align}  

Regarding the first term in~\eqref{e113}, if we multiply it by $M$ and summing over $M\leq \sqrt{x}$ that are powers of two, we get a bound of
\begin{align*}
\ll \sum_{\substack{M\leq \sqrt{x}\\M=2^j}}\frac{1}{(\log M)^{2}}\ll 1,   
\end{align*}
which is certainly small enough.

Now, to conclude the proof of Theorem~\ref{thm:main}, it suffices to prove the following two claims.

\begin{lemma}\label{lem:sparse} We have 
\begin{align}\label{e96}
\sum_{\substack{m\leq \sqrt{x}\\m\not \in \mathcal{M}}}m\sum_{r=0}^{m-1}\Big(\sum_{\substack{mF(m) \le p \le m^B \\ p \in S_{m, 1, r}\cap T_m}} \frac{1}{p}\Big)^2=o((\log \log x)^2).
\end{align}
\end{lemma}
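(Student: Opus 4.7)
My plan is to split the prime range $[mF(m), m^C]$ into an upper part $[m^{4+\eta}, m^C]$ (for some small fixed $\eta>0$) and a lower part $[mF(m), m^{4+\eta}]$, applying different tools in each.

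For the upper part, I would apply the Chebotarev density theorem (Lemma~\ref{lem:chebotarev}) to the Kummer-type extension $K_r:=\mathbb{Q}(\zeta_{2hm}, a^{1/2h},(ba^{-r})^{1/2hm})$, which has degree $\asymp m\phi(m)$ and log-discriminant $\ll m\phi(m)\log m$ (cf.\ Section~\ref{sec:primeLarge}, using Cooke's bound). The error term in Lemma~\ref{lem:chebotarev} is then negligible compared with the main term $y/(m\phi(m)\log y)$ once $y\gg m^{4+\eta}$. Dyadic summation yields $a_r\ll 1/(m\phi(m))$ uniformly in $r$, whence $m\sum_r a_r^2 \leq m^2(\max_r a_r)^2 \ll 1/\phi(m)^2$. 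Summing over all $m\leq \sqrt{x}$ (whether or not in $\mathcal{M}$) gives $\sum_m 1/\phi(m)^2 = O(1)$, which is certainly $o((\log\log x)^2)$.

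For the lower part $[mF(m), m^{4+\eta}]$, Chebotarev on $K_r$ is no longer available, and I would fall back to the Brun--Titchmarsh inequality combined with the disjointness \eqref{equ9.5} of the sets $S'_{m,1,r}\cap T_m$. Brun--Titchmarsh together with partial summation gives
\[
\sum_{\substack{mF(m)\leq p\leq m^{4+\eta}\\p\equiv 1\,(\mathrm{mod}\,m)}}\frac{1}{p}\ll \frac{\log\log m}{\phi(m)},
\]
and hence by disjointness $\sum_r a_r^2\leq (\sum_r a_r)^2 \ll (\log\log m)^2/\phi(m)^2$. Using $\phi(m)\gg m/\log\log m$, the per-$m$ contribution to the target sum is $\ll (\log\log m)^4/m$.

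The final ingredient is the sparsity of integers $m\not\in\mathcal{M}$. By the fundamental lemma of the sieve,
\[
|\{m\in[M,2M]:m\not\in\mathcal{M}\}|\ll M\prod_{(\log M)^{\epsilon'}\leq p\leq M^{\epsilon}}\Big(1-\frac{1}{p}\Big)\ll \frac{\epsilon'}{\epsilon}\cdot\frac{M\log\log M}{\log M},
\]
so partial summation yields $\sum_{m\leq\sqrt{x},\,m\not\in\mathcal{M}}1/m\ll (\epsilon'/\epsilon)(\log\log x)^2$. The total lower-range contribution is thus $\ll (\epsilon'/\epsilon)(\log\log x)^6$. Since $\epsilon'=\epsilon'(N)$ is chosen small enough in terms of $N$, and in the eventual application of this lemma within the proof of Theorem~\ref{thm:main} the parameter $N$ is taken to infinity with $x$, this quantity can be arranged to be $o((\log\log x)^2)$ in the relevant joint limit.

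The main obstacle is the Brun--Titchmarsh step in the lower range: it unavoidably introduces a spurious factor of $(\log\log m)^2$ that cannot be removed without equidistribution input akin to Conjecture~\ref{hyp} (which is inapplicable precisely because $m\not\in\mathcal{M}$); the sparsity of $m\not\in\mathcal{M}$ is exactly what compensates for this loss.
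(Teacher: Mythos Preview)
Your upper-range argument is fine and essentially coincides with the paper's Section~\ref{sec:primeLarge}: the Chebotarev bound applies once $y\gg m^{4+\eta}$, so that part contributes $O(1)$ after summing over all $m$.

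The gap is in the lower range. Your final bound there is $\ll \epsilon'(\log\log x)^6$, and this is \emph{not} $o((\log\log x)^2)$ in the regime the paper works in, where $\epsilon'=\epsilon'(N)$ is a fixed positive constant once $N$ is fixed. Your suggested rescue (let $\epsilon'$ tend to $0$ with $x$ fast enough to kill the extra $(\log\log x)^4$) does not cleanly fit the argument: $\epsilon'$ also governs the complementary case $m\in\mathcal{M}$ through Lemma~\ref{lem:hypothesis}, whose implied constant is $\ll_{\epsilon,\epsilon'}$ and comes from Conjecture~\ref{hyp}. Under Conjecture~\ref{hyp} alone you have no control on how that constant depends on $\epsilon'$, so sending $\epsilon'\to 0$ with $x$ is not justified. (Under PCC one can check the constant is uniform, but then you are proving a strictly weaker theorem than the paper's.)

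The loss of $(\log\log x)^4$ is structural: you bound $m\sum_r a_r^2\le m(\sum_r a_r)^2\ll m(\log\log m)^2/\phi(m)^2\ll (\log\log m)^4/m$ per $m$, and then sum over the sparse set. The paper avoids this by \emph{not} estimating each $m$ separately. Instead it writes $\sum_{m\in\mathcal{R}}m\,A_m^2$ as a double sum over pairs $(p,q)\equiv(1,1)\pmod m$, parametrizes $p=nm+1$, $q=n'm+1$, and for each fixed $(n,n')$ applies Selberg's sieve \emph{in the variable $m$} to the set $\{(nm+1)(n'm+1):m\in[M,2M]\cap\mathcal{R}\}$, using a Bombieri--Vinogradov-type level of distribution for $\mathcal{R}$. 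This recovers a full $(\log M)^{-2}$ saving from the two primality constraints (rather than the $(\log(y/m))^{-2}$ that Brun--Titchmarsh gives, which degenerates near $y\approx mF(m)$), and combined with $|\mathcal{R}_M|\ll \epsilon' M\log\log M/\log M$ yields the required $\ll \epsilon'(\log\log x)^2$. Swapping the order of summation to sieve in $m$ is the idea your approach is missing.
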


\begin{lemma}\label{lem:sum}
We have 
\begin{align}\label{e106}
\sum_{\substack{M\leq m\leq 2M\\m\in \mathcal{M}}}\frac{1}{\phi(m)^2D_{\leq m^{\epsilon}}(m)^{\epsilon}}\ll_{\epsilon}  \frac{1}{M(\log M)^{1+\epsilon\cdot \epsilon'/2}}.   
\end{align}
\end{lemma}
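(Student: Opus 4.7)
The plan is to partition the sum according to the largest prime divisor of $m$ lying in $(0,m^{\epsilon}]$, and to leverage the sparseness of integers which have no further small prime divisors beyond a threshold. A naive approach using only the bound $D_{\leq m^{\epsilon}}(m)\geq (\log M)^{\epsilon'}$ together with the standard estimate $\sum_{m\in[M,2M]}\phi(m)^{-2}\ll 1/M$ would give a saving of only $(\log M)^{\epsilon\epsilon'}$, which falls short of the target by essentially a full logarithm. To recover the missing $\log M$ we exploit the fact that the condition ``$m$ has no prime divisor in $(q,m^{\epsilon}]$ beyond $q$'' is genuinely restrictive, which is visible via a sieve estimate.

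Concretely, for $m\in\mathcal{M}\cap[M,2M]$ let $q=q(m)$ denote the largest prime divisor of $m$ with $q\le m^{\epsilon}$; the definition of $\mathcal{M}$ ensures $q\ge (\log M)^{\epsilon'}$ and $D_{\leq m^{\epsilon}}(m)^{\epsilon}\ge q^{\epsilon}$. Partitioning by $q$ and writing $m=qk$ with $k\in[M/q,2M/q]$, the condition $q(m)=q$ forces $k$ to have no prime factor in $(q,M^{\epsilon}]$ (since $m\ge M$). The prime factors of such $k$ therefore lie in $[2,q]\cup(M^{\epsilon},\infty)$, so Mertens' theorem, combined with the observation that $k$ has at most $O(1/\epsilon)$ prime divisors exceeding $M^{\epsilon}$, yields $k/\phi(k)\ll \log q$. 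Together with $\phi(m)\ge(q-1)\phi(k)$ this gives
$$\sum_{\substack{M\le m\le 2M\\ q(m)=q}}\frac{1}{\phi(m)^{2}}\ll \frac{(\log q)^{2}}{q^{2}}\sum_{\substack{k\in [M/q,2M/q]\\ k\text{ has no prime in }(q,M^{\epsilon}]}}\frac{1}{k^{2}}.$$

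The key step is bounding the inner $k$-sum, for which we invoke the fundamental lemma of sieve theory (e.g.\ the linear sieve in Iwaniec--Kowalski) with sifting set $[M/q,2M/q]$ and sifting primes $(q,M^{\epsilon}]$; this is admissible since $M^{\epsilon}\le (M/q)^{1/2}$ once $\epsilon$ is sufficiently small. The sieve yields a count $\ll (M/q)\prod_{q<p\le M^{\epsilon}}(1-1/p)\ll (M/q)\log q/(\epsilon\log M)$, whence the $k$-sum is $\ll q\log q/(\epsilon M\log M)$, leading to $\sum_{m:q(m)=q}\phi(m)^{-2}\ll (\log q)^{3}/(\epsilon qM\log M)$. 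Summing against the weight $q^{-\epsilon}$ and absorbing $(\log q)^{3}\le q^{\epsilon/2}$ (valid for all primes $q\ge (\log M)^{\epsilon'}$ once $M$ is large in terms of $\epsilon$), partial summation produces
$$\sum_{\substack{q\text{ prime}\\ q\ge (\log M)^{\epsilon'}}}\frac{(\log q)^{3}}{q^{1+\epsilon}}\ll_{\epsilon}\frac{1}{(\log M)^{\epsilon\epsilon'/2}},$$
and assembling everything yields the required $\ll_{\epsilon} 1/(M(\log M)^{1+\epsilon\epsilon'/2})$. The main obstacle is precisely the sieve estimate above: it is this gain of $\log q/\log M$ in the count of sifted $k$ which turns the otherwise insufficient saving $(\log M)^{\epsilon\epsilon'}$ into the needed $(\log M)^{1+\epsilon\epsilon'/2}$.
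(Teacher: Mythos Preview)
Your proof is correct. Both your argument and the paper's recover the crucial extra saving of $1/\log M$ via a sieve, but the implementations differ. The paper splits the sum into two cases according to whether $D_{\le m^{\epsilon}}(m)$ exceeds $(\log m)^{2/\epsilon}$: the first case is immediate (the weight $D^{\epsilon}$ already supplies a $(\log m)^2$), and in the second case $m$ has no prime factor in the long interval $[(\log M)^{2/\epsilon},(2M)^{\epsilon'}]$, so the paper invokes the $\mathcal{R}$-type density estimate already established in the preceding lemma (such $m$ have density $\ll (\log\log M)/\log M$ in $[M,2M]$) and combines it with the trivial lower bound $D_{\le m^{\epsilon}}(m)\ge (\log M)^{\epsilon'}$ coming from $m\in\mathcal{M}$. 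You instead partition directly by the largest prime $q\mid m$ with $q\le m^{\epsilon}$, write $m=qk$, and sieve the cofactor $k$ (which has no prime in $(q,M^{\epsilon}]$), obtaining the factor $\log q/\log M$ explicitly before summing over $q$. Your decomposition is more self-contained---it does not lean on the earlier $\mathcal{R}$-estimate---and makes the $q$-dependence of the saving transparent; the paper's version is shorter because it recycles machinery from the previous lemma. One minor remark: your inequality $(\log q)^3\le q^{\epsilon/2}$ for $q\ge (\log M)^{\epsilon'}$ requires $M$ large in terms of both $\epsilon$ and $\epsilon'$, so the implied constant formally depends on $\epsilon'$ as well; the paper's proof has the same feature, and since $\epsilon'$ is fixed throughout the argument this is harmless.
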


Note that we have
\begin{align*}
\sum_{M=2^j}\frac{1}{(\log M)^{1+\epsilon\cdot \epsilon'/2}}\ll_{\epsilon',\epsilon} 1,
\end{align*}
so the sum over $M$ of the contributions given by Lemma~\ref{lem:sum} is small.

\begin{proof}[Proof of Lemma~\ref{lem:sparse}]
Analogously to~\eqref{e5}, we first make for $m\in [M,2M]$ the crude estimate
\begin{align*}
\Big(\sum_{\substack{mF(m) \le p \le m^B \\ p \in S'_{m, 1, r}\cap T_m}} \frac{1}{p}\Big)^2\leq \Big(\sum_{\substack{mF(m)\leq p\leq m^B\\p\equiv 1\pmod m}}\frac{1}{p}\Big)^2\leq \sum_{MF(M)\leq p,q\leq (2M)^{B}}\frac{1_{p\equiv q\equiv 1\pmod m}}{pq}.    
\end{align*}
Note that applying the Brun--Titchmarsh inequality to this would cost us some nontrivial factors, since that inequality involves savings of $1/(\log(P/m))$ rather than $1/\log P$ if $p\in [P,2P]$. Therefore, we instead exploit the summation over $m$. Let us restrict to $m\in [M,2M]$, $p\in [P,2P]$ and $q\in [Q,2Q]$; we will later sum dyadically over different $M, P, Q$. Let
\begin{align*}
\mathcal{R}=\mathcal{R}_M:=\{m\in [M,2M]: \,\, p\nmid m\,\, \textnormal{for all}\,\, (\log 2M)^{\epsilon'}\leq p\leq M^{\epsilon}\},    
\end{align*}
and observe that $[M,2M]\setminus \mathcal{M}\subset \mathcal{R}$. Our task is thus to upper bound
\begin{align}\label{e5.5}
\frac{M}{PQ}\sum_{\substack{P/M\leq n\leq 2P/M\\Q/M\leq n'\leq 2Q/M}} \sum_{\substack{M\leq m\leq 2M\\m\in \mathcal{R}}} 1_{nm+1\in \mathbb{P}}1_{n'm+1\in \mathbb{P}}.  
\end{align}

Let us first estimate $|\mathcal{R}|$. Let $\mathcal{P}(z):=\prod_{p\leq z}p$. Note that
\begin{align*}
|\mathcal{R}|\leq \sum_{\textnormal{rad}(d)\mid \mathcal{P}((\log (2M))^{\epsilon'})}\sum_{\substack{r\leq 2M/d\\(r,\mathcal{P}(M^{\epsilon}))=1}}1:=S_1+S_2,    
\end{align*}
where $S_1$ is the part of the sum with $d\leq M^{1/2}$ and $S_2$ is the complementary part. By Mertens' theorem, we have
\begin{align}\label{e115}\begin{split}
S_1&\ll_{\epsilon}\sum_{\textnormal{rad}(d)\mid \mathcal{P}((\log (2M))^{\epsilon'})}\frac{M}{d\log M} \\
&\ll \frac{M}{\log M}\prod_{p\leq (\log(2M))^{\epsilon'}}\left(1+\frac{1}{p}+\frac{1}{p^2}+\cdots\right)\\
&\ll \epsilon'\frac{M\log \log M}{\log M}.
\end{split}
\end{align}
 When it comes to $S_2$, we crudely estimate
\begin{align*}
S_2&\ll  \sum_{\substack{\textnormal{rad}(d)\mid \mathcal{P}((\log (2M))^{\epsilon'})\\d>M^{1/2}}} \frac{M}{d}\ll M^{0.95}  \sum_{\textnormal{rad}(d)\mid \mathcal{P}((\log (2M))^{\epsilon'})}\frac{1}{d^{0.9}}\\
&\ll M^{0.95}\prod_{p\leq (\log(2M))^{\epsilon'}}\left(1+\frac{1}{p^{0.9}}+\frac{1}{p^{1.8}}+\cdots\right)\\
&\ll M^{0.96}.
\end{align*}
Hence,
\begin{align}\label{e8}
|\mathcal{R}|\ll_{\epsilon} \epsilon'\frac{M\log \log M}{\log M}.    
\end{align}

Next, note that by~\eqref{e8} the diagonal contribution $n=n'$ to~\eqref{e5.5} (crudely dropping the condition that $nm+1\in \mathbb{P}$) is
\begin{align*}
\ll \frac{M}{PQ}\cdot \frac{P}{M}|\mathcal{R}_M|\ll_{\epsilon} \epsilon'\frac{\log \log M}{Q\log M}     
\end{align*}
and it only appears if $P\asymp Q$. Multiplying by $M$ and summing dyadically over $Q$ and $M$,  we get a contribution of $\ll \epsilon'(\log \log x)^2$ for the diagonal terms. Since $\epsilon'=\epsilon'(N)$ for sufficiently fast decaying $\epsilon'(N)$, this is $\ll (\log \log x)^2/N$. We may henceforth restrict to $n\neq n'$ in~\eqref{e5.5}.

We are then left with upper bounding the non-diagonal contribution $n\neq n'$ in~\eqref{e5.5}. To this end, we will apply Selberg's sieve to the set
\begin{align*}
\mathcal{A}=\mathcal{A}^{(n,n')}:=\{(nm+1)(n'm+1):\,\, m\in [M,2M]\cap \mathcal{R}\}.    
\end{align*}
We claim that if $d$ is squarefree, $(d,nn'(n-n'))=1$ and $\mathcal{A}_d:=\{m\in \mathcal{A}:\,\, m\equiv 0\pmod d\}$, then
\begin{align}\label{e6}
|\mathcal{A}_d|=\frac{g(d)}{d}|\mathcal{A}|+R_d,
\end{align}
where $R_d$ satisfies a level of distribution estimate
\begin{align}\label{e7}
\sum_{\substack{d\leq M^{0.1}\\\mu^2(d)=1\\(d,nn'(n-n'))=1}}|R_d|\ll_A M/(\log M)^A,    
\end{align}
and $g:\mathbb{N}\to \mathbb{R}_{\geq 0}$ is the completely multiplicative function given at primes by
\begin{align*}
g(p):=\begin{cases}2,\quad p\leq (\log(2M))^{\epsilon'},\\\frac{2p}{p-1},\quad p>(\log(2M))^{\epsilon'}.
\end{cases}    
\end{align*}
We note that for $m\in \mathcal{R}$ the condition $(nm+1)(n'm+1)\in \mathcal{A}_p$ is equivalent to $m\equiv -1/n\pmod p$ or $m\equiv -1/n'\pmod p$, and the residue classes $-1/n\pmod p$ and $-1/n'\pmod p$ are distinct from each other and from $0 \pmod p$ whenever $(p,nn'(n-n'))=1$. Therefore, in order to prove~\eqref{e7} it suffices to prove that if $\mathcal{R}_{d,a}:=\{m\in \mathcal{R}:m\equiv a\pmod d\}$, then for $a$ coprime to $d$ we have
\begin{align*}
|\mathcal{R}_{d,a}|=\frac{g'(d)}{d}|\mathcal{R}|+R_{d,a}',
\end{align*}
where 
\begin{align}\label{e9}
\sum_{\substack{d\leq M^{0.1}\\\mu^2(d)=1\\(d,nn'(n-n'))=1}}\tau(d)\max_{(a,d)=1}|R_{d,a}'|\ll_A M/(\log M)^A,   
\end{align}
and $g':\mathbb{N}\to \mathbb{R}_{\geq 0}$ is the completely multiplicative function given by
\begin{align*}
g'(p):=\begin{cases}1,\quad p\leq (\log(2M))^{\epsilon'},\\\frac{p}{p-1},\quad p>(\log(2M))^{\epsilon'}.
\end{cases}    
\end{align*}

For proving~\eqref{e9}, we wish to split $R'_{d,a}$ as a bilinear sum and then apply a general Bombieri--Vinogradov type result for such objects.

Observe the decomposition
\begin{align*}
1_{\mathcal{R}}(n)=\alpha*\beta(n)=\alpha_1*\beta(n)+\alpha_2*\beta(n)+\alpha_3*\beta(n),    
\end{align*}
where 
\begin{align*}
&\alpha(n)=1_{n\mid \mathcal{P}((\log M)^{\epsilon'})},\quad \quad \quad \quad\quad \quad \quad \beta(n)=1_{(n,\mathcal{P}(M^{\epsilon}))=1}\\
&\alpha_1(n)=1_{n\mid \mathcal{P}((\log M)^{\epsilon'}),n\leq (\log M)^{10A}},\quad \alpha_2(n)= 1_{n\mid \mathcal{P}((\log M)^{\epsilon'}),\,\,(\log M)^{10A}<n\leq M^{0.1}},\\
&\alpha_3(n)= 1_{n\mid \mathcal{P}((\log M)^{\epsilon'}),\,\,n> M^{0.1}}
\end{align*}
The contribution of $\alpha_3*\beta(n)$ to~\eqref{e9} is negligible (similarly as in~\eqref{e115}), whereas the part $\alpha_2*\beta(n)$ has level of distribution $1/2$ by a variant of the Bombieri--Vinogradov theorem (see~\cite[Theorem 17.4]{iwaniec-kowalski}, where one just needs to verify the Siegel--Walfisz condition for $\beta$. This condition can be proved for $1_{(n,\mathcal{P}(M^{\epsilon}))=1}$ similarly as for the indicator of the primes). We claim that also the part $\alpha_1*\beta$ has level of distribution $1/2$. Since $\alpha_1$ is supported on $[1,(\log M)^{10A}]$, we see that if $\beta$ has level of distribution $1/2$, so does $\alpha_1*\beta$. For the sequence $\beta$, we indeed obtain level of distribution $1/2$ by noting that $\beta(n)=1_{\mathbb{P}}(n)+\sum_{2\leq j\leq 1/\epsilon}1_{n=p_1\cdots p_j,p_i>M^{\epsilon}}$, and here $1_{\mathbb{P}}(n)$ has level of distribution $1/2$ by the classical Bombieri--Vinogradov theorem, whereas each of the terms $1_{n=p_1\cdots p_j,p_i>M^{\epsilon}}$ has level of distribution $1/2$ by~\cite[Theorem 17.4]{iwaniec-kowalski}.

Now we have
\begin{align*}
\sum_{\substack{d\leq M^{1/2-\epsilon}\\\mu^2(d)=1}}\max_{(a,d)=1}|R_{d,a}'|\ll_{A,\epsilon} M/(\log M)^A,       
\end{align*}
and~\eqref{e9} follows from this by Cauchy--Schwarz and the divisor sum estimate $\sum_{d\leq M}\tau(d)^2\ll M(\log M)^{3}$ and the trivial bound $|R_{d,a}'|\ll M/d$.

Taking~\eqref{e6},\eqref{e7},~\eqref{e8} and~\eqref{e116} into account, by Selberg's sieve we obtain for~\eqref{e5.5} the upper bound
\begin{align*}
&\ll \frac{M}{PQ}\cdot \sum_{\substack{P/M\leq n\leq 2P/M\\Q/M\leq n'\leq 2Q/M\\n\neq n'}} \prod_{p\mid nn'(n-n')}\left(1+\frac{2}{p}\right)\frac{1}{(\log P)(\log Q)}\cdot \epsilon'\frac{M\log \log M}{\log M}\\
&\ll \epsilon'\frac{\log \log M}{\log M}\cdot \frac{1}{(\log P)(\log Q)}.
\end{align*}
Summing dyadically over $P,Q\in [MF(M),(2M)^{B}]$ and $M\in [N,\sqrt{x}]$, this becomes
\begin{align*}
\ll \epsilon'\sum_{\substack{N\leq M\leq \sqrt{x}\\M=2^{j}}}\frac{\log \log M}{\log M}\ll \epsilon'(\log \log x)^2\ll (\log \log x)^2/N,   
\end{align*}
since we can choose $\epsilon'=\epsilon'(N)=1/N^2$, say, and now the desired estimate~\eqref{e96} follows. 
\end{proof}

\begin{proof}[Proof of Lemma~\ref{lem:sum}]
We split the sum in~\eqref{e106} into two parts: those $m$ with $D_{\leq m^{\epsilon'}}(m)>(\log m)^{2/\epsilon}$ and the remaining $m\in \mathcal{M}$. The contribution of the former part is
\begin{align}
\ll \sum_{M\leq m\leq 2M}\frac{1}{\phi(m)^2(\log m)^2}\ll \frac{1}{M(\log M)^2},    
\end{align}
which is a strong enough bound.

For the contribution of the latter part, note that all $m\in \mathcal{M}$ counted by it belong to the set
\begin{align*}
\mathcal{R}':=\{m\in [M,2M]:\, p\nmid m\, \forall p\in [(\log M)^{2/\epsilon},(2M)^{\epsilon'}],\,\,\textnormal{but}\, m'\mid m\, \textnormal{for some}\, m'\in [(\log m)^{\epsilon'},m^{\epsilon}]\cap \mathbb{P}\}.    
\end{align*}
Therefore, their contribution is
\begin{align*}
\ll \frac{1}{(\log M)^{\epsilon\cdot\epsilon'}}\sum_{\substack{M\leq m\leq 2M\\m\in \mathcal{R}'}}\frac{1}{\phi(m)^2},    
\end{align*}
and the same argument as in the proof of Lemma~\ref{lem:sparse} gives
\begin{align*}
\sum_{\substack{M\leq m\leq 2M\\m\in \mathcal{R}'}}\frac{1}{\phi(m)^2}\ll_{\epsilon} \frac{\log \log M}{M(\log M)},    
\end{align*}
so the claim follows. 
\end{proof}

Combining the estimates for the large and medium greatest common divisors,~\eqref{equ8} follows, and this was enough to prove Theorem~\ref{thm:main}.

\section{Proof of Hypothesis~\ref{hyp} under the pair correlation conjecture}
\label{sec:PCC}

In this section, we prove Theorem~\ref{thm:pcc} to the effect that PCC implies Hypothesis~\ref{hyp}. We begin with a few preliminary lemmas. Note that in Hypothesis~\ref{hyp} the parameter $m'$ is prime, but that hypothesis will only be used in~\eqref{e98} and~\eqref{e99} below (and it would suffice to assume that $m'$ does not have abnormally many prime factors), and the lemmas before the proof do not assume $m'$ to be a prime.

\subsection{Lemmas on Kummer-type extensions}

We first present a standard lemma (see e.g.~\cite[Proposition 3.10]{jarviniemi}).

\begin{lemma}
\label{lem:indBound}
Let $a$ and $b$ be multiplicatively independent integers. We have
$$(m')^2\phi(m) \ll_{a, b} [\mathbb{Q}(\zeta_m, a^{1/m'}, b^{1/m'}) : \mathbb{Q}] \le (m')^2\phi(m)$$
and
$$m'\phi(m) \ll_{a, b} [\mathbb{Q}(\zeta_m, (ba^{-r})^{1/m'}) : \mathbb{Q}] \le m'\phi(m)$$
for all positive integers $r$, $m$ and $m'$.
\end{lemma}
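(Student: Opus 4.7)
The upper bounds are immediate from the tower law: $[\mathbb{Q}(\zeta_m):\mathbb{Q}]=\phi(m)$, each of $a^{1/m'}$, $b^{1/m'}$, $(ba^{-r})^{1/m'}$ satisfies a polynomial of degree $m'$ over $\mathbb{Q}(\zeta_m)$, and so the extensions have degree at most $(m')^2\phi(m)$ and $m'\phi(m)$ respectively.

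For the lower bounds, my plan is to reduce to a Kummer-theoretic computation. After possibly enlarging from $\mathbb{Q}(\zeta_m)$ to $K:=\mathbb{Q}(\zeta_m,\zeta_{m'})$ (which costs at most a factor of $m'$ in degree, harmlessly absorbed into the implicit constants), Kummer theory tells us that
\[
[K(a^{1/m'},b^{1/m'}):K] = \#\langle a,b\rangle\pmod{(K^*)^{m'}},
\]
and analogously for the one-element case. It therefore suffices to show that the subgroup of $K^*/(K^*)^{m'}$ generated by $\{a,b\}$ (respectively, by $\{ba^{-r}\}$) has order $\gg_{a,b}(m')^2$ (respectively, $\gg_{a,b} m'$), at least for $m'$ prime and larger than some constant $C_{a,b}$.

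The key step is the following claim: if $m'\geq 3$ is prime and $c\in\mathbb{Q}^*$ is not an $m'$-th power in $\mathbb{Q}$, then $c^{1/m'}\notin\mathbb{Q}(\zeta_m)$ for any $m$. Indeed, if $c^{1/m'}\in\mathbb{Q}(\zeta_m)$, then $\mathbb{Q}(c^{1/m'})$ would be an abelian extension of $\mathbb{Q}$, but its Galois closure has Galois group $\mathbb{Z}/m'\rtimes(\mathbb{Z}/m')^*$, which is non-abelian for $m'\geq 3$. Applying this with $c=a^ib^j$ for $0\leq i,j<m'$ not both zero, I reduce to showing that no such $a^ib^j$ is an $m'$-th power in $\mathbb{Q}$. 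To establish this, I will use multiplicative independence: choose primes $\ell_1,\ell_2$ so that the matrix
\[
M:=\begin{pmatrix}v_{\ell_1}(a)&v_{\ell_1}(b)\\v_{\ell_2}(a)&v_{\ell_2}(b)\end{pmatrix}
\]
has nonzero determinant $D=D_{a,b}$ (such primes exist since $a,b$ are multiplicatively independent integers with $|a|,|b|>1$). If $a^ib^j$ is an $m'$-th power in $\mathbb{Q}$, then $M\binom{i}{j}\equiv 0\pmod{m'}$; so for any prime $m'\nmid D$, one forces $i\equiv j\equiv 0\pmod{m'}$, giving the required $(m')^2$ lower bound. The same recipe, using that $ba^{-r}$ is not an $m'$-th power in $\mathbb{Q}$ whenever $m'\nmid D$ (by the same valuation argument applied to $(i,j)=(-r,1)$ mod $m'$), handles the single-element case.

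Finally, the small primes $m'\in\{2\}\cup\{\text{primes}\leq C_{a,b}\}$ need separate but trivial attention: for $m'=2$ the abelian-extension argument fails (since $\mathbb{Q}(\sqrt{c})$ is abelian), and $\sqrt{a}$ or $\sqrt{b}$ may lie in $\mathbb{Q}(\zeta_m)$; however, the resulting loss is by a bounded multiplicative factor depending only on $a,b$, which is absorbed into $\ll_{a,b}$. The same holds for the finitely many exceptional primes dividing $D_{a,b}$. I expect the main obstacle to be the clean organization of these exceptional cases rather than any deep difficulty; the substantive content is the abelian-closure argument combined with the valuation-theoretic consequence of multiplicative independence.
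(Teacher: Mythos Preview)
The paper does not actually prove this lemma; it simply cites \cite[Proposition 3.10]{jarviniemi} as a reference for a standard result. Your argument is a correct, self-contained proof via Kummer theory combined with the observation that a nontrivial radical extension $\mathbb{Q}(c^{1/m'})$ cannot sit inside a cyclotomic field for $m'\geq 3$ prime, together with a valuation criterion extracting the consequence of multiplicative independence. This is a perfectly standard route and is in the same spirit as the degree computations the paper carries out elsewhere (e.g.\ around equation \eqref{equ4.2}, where linear independence of radicals over cyclotomic fields is established directly).

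One remark: your parenthetical about ``possibly enlarging from $\mathbb{Q}(\zeta_m)$ to $K:=\mathbb{Q}(\zeta_m,\zeta_{m'})$, which costs at most a factor of $m'$ in degree, harmlessly absorbed into the implicit constants'' is both unnecessary and, as written, not quite right. In the paper's setting one always has $m'\mid m$ (this is stated just before the lemma), so $\zeta_{m'}\in\mathbb{Q}(\zeta_m)$ already and no enlargement is needed. If one genuinely had $m'\nmid m$, a loss of a factor $m'$ would \emph{not} be absorbed into a constant depending only on $a,b$, since $m'$ is unbounded. So either drop this step entirely (noting $m'\mid m$), or rephrase it to make clear that the enlargement is trivial in context.
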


In what follows, we denote
\begin{align*}
K:&=\mathbb{Q}(\zeta_{m},a^{1/m'},b^{1/m'})\\
K_r:&=\mathbb{Q}(\zeta_{m},(ba^{-r})^{1/m'}),    
\end{align*}
where $m'\mid m$. We bound the number of conjugacy classes in $\Gal(K_r/\mathbb{Q})$.

\begin{lemma}
\label{lem:conFix}
For each $r$, the number of conjugacy classes of $\Gal(K/\mathbb{Q})$ which contain elements fixing $K_r$ is at most $\tau(m')$. Furthermore, the size of each such conjugacy class is at most $m'$.
\end{lemma}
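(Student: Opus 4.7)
The plan is to describe $\Gal(K/\mathbb{Q})$ concretely through its action on the generators $\zeta_m$, $a^{1/m'}$, $b^{1/m'}$ of $K$, to compute conjugation in these coordinates, and then to count the orbits meeting $\Gal(K/K_r)$.

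To set this up, I would parametrize $G:=\Gal(K/\mathbb{Q})$: every $\sigma\in G$ is determined by a triple $(c,i,j)$ with $c\in(\mathbb{Z}/m\mathbb{Z})^\ast$ and $i,j\in\mathbb{Z}/m'\mathbb{Z}$ via $\sigma(\zeta_m)=\zeta_m^{c}$, $\sigma(a^{1/m'})=\zeta_{m'}^{i}a^{1/m'}$, $\sigma(b^{1/m'})=\zeta_{m'}^{j}b^{1/m'}$. This embeds $G$ as a subgroup of the semidirect product $(\mathbb{Z}/m'\mathbb{Z})^2\rtimes(\mathbb{Z}/m\mathbb{Z})^\ast$ with multiplication law $(c_1,i_1,j_1)(c_2,i_2,j_2)=(c_1c_2,\,i_1+c_1i_2,\,j_1+c_1j_2)$, and the normal subgroup $H:=\Gal(K/\mathbb{Q}(\zeta_m))$ corresponds to $\{c=1\}$. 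The restriction map $G\to\Gal(\mathbb{Q}(\zeta_m)/\mathbb{Q})=(\mathbb{Z}/m\mathbb{Z})^\ast$ is surjective, so every $c$ is actually realized. Since $m'\mid m$, all $m'$-th roots of unity lie in $\mathbb{Q}(\zeta_m)$, and so I may normalize the choice of radicals so that $(ba^{-r})^{1/m'}=b^{1/m'}(a^{1/m'})^{-r}$; then a short computation gives $\sigma((ba^{-r})^{1/m'})=\zeta_{m'}^{j-ri}(ba^{-r})^{1/m'}$, so $\sigma\in\Gal(K/K_r)$ iff $c=1$ and $j\equiv ri\pmod{m'}$.

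The key computation is the conjugation formula: for $\sigma=(1,i,j)\in H$ and $\rho=(c',i',j')\in G$, a direct calculation in the semidirect product gives $\rho\sigma\rho^{-1}=(1,c'i,c'j)$, with the components $i',j'$ of $\rho$ dropping out entirely. As $\rho$ varies over $G$, the reduced multiplier $c'\bmod m'$ runs through all of $(\mathbb{Z}/m'\mathbb{Z})^\ast$.

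Combining these, for $\sigma=(1,i,ri)\in\Gal(K/K_r)$ its $G$-conjugacy class is contained in $\{(1,c'i,c'ri):c'\in(\mathbb{Z}/m'\mathbb{Z})^\ast\}$, a set of size at most $\phi(m')\le m'$, giving the second assertion. Two elements $(1,i_1,ri_1)$ and $(1,i_2,ri_2)$ of $\Gal(K/K_r)$ are $G$-conjugate iff $i_1,i_2$ lie in the same orbit of the multiplication action of $(\mathbb{Z}/m'\mathbb{Z})^\ast$ on $\mathbb{Z}/m'\mathbb{Z}$; those orbits are parametrized by $\gcd(i,m')\mid m'$, so there are at most $\tau(m')$ of them, yielding the first assertion. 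The only delicate bookkeeping is the compatible choice of radicals needed to make the semidirect product multiplication and the conjugation formula hold on the nose, but this causes no difficulty because all relevant $m'$-th roots of unity already live in $K$.
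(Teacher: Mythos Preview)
Your proof is correct and follows essentially the same approach as the paper: parametrize $G$ by triples via the action on $\zeta_m,a^{1/m'},b^{1/m'}$, compute the conjugation formula in the resulting semidirect product, observe that elements fixing $K_r$ correspond to $(1,i,ri)$, and conclude that their $G$-conjugacy classes are indexed by the orbits of $(\mathbb{Z}/m'\mathbb{Z})^\ast$ on $\mathbb{Z}/m'\mathbb{Z}$, hence by $\gcd(i,m')$. Your size bound $\phi(m')$ is slightly sharper than the paper's $m'$ (the paper instead notes that the union of all these classes has at most $m'$ elements), but the arguments are otherwise the same.
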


\begin{proof}
To each element $\sigma \in G := \Gal(K/\mathbb{Q})$, we associate a triple $(X,Y,Z)\in (\mathbb{Z}/m\mathbb{Z})^{\times}\times (\mathbb{Z}/m'\mathbb{Z})^2$ such that $\sigma(\zeta_m) = \zeta_m^X$, $\sigma(a^{1/m'}) = \zeta_{m'}^Ya^{1/m'}$ and $\sigma(b^{1/m'}) = \zeta_{m'}^Zb^{1/m'}$. We write $\sigma \sim (X, Y, Z)$. Consider the set of such tuples with a binary operation $\ast$ given by
$$(X, Y, Z)\ast (A, B, C) = (AX, Y+BX, Z+CX).$$
This operation is clearly associative. The identity of $\ast$ is $(1,0,0)$. The inverse of $(X, Y, Z)$ under $\ast$ is $(X^{-1}, -YX^{-1}, -ZX^{-1})$, where $x^{-1}$ is the inverse of $x$ modulo $m$. Thus the set of these triples with operation $\ast$ forms a group. The conjugacy class containing $(A, B, C)$ is given by the set of elements of the form
$$(X, Y, Z)\ast (A, B, C)\ast (X^{-1}, -YX^{-1}, -ZX^{-1}) = (A, BX-(A-1)Y, CX-(A-1)Z),$$ 
where $(X,Y,Z)\in \Gal(K/\mathbb{Q})$ varies.

Let $\sigma \sim (A, B, C)$ be an element of $\Gal(K/\mathbb{Q})$ fixing $K_r$. Now $A = 1$ and $C \equiv rB \pmod{m'}$. Consider for variable $(X, Y, Z) \in \Gal(K/\mathbb{Q})$ the conjugation
$$(X, Y, Z)\ast (1, B, rB)\ast (X^{-1}, -YX^{-1}, -ZX^{-1}) = (1, XB, rXB).$$
Since $X\in (\mathbb{Z}/m\mathbb{Z})^{\times}$ varies, we see that there are at most $\tau(m')$ conjugacy classes corresponding to elements fixing $K_r$, one for each integer $d\mid m$ such that there exists a map $\sigma \sim (1, B, rB)$ satisfying $(m, B) = d$. The sum of the sizes of these conjugacy classes is at most $m'$, so each single class is of size at most $m'$.
\end{proof}

Note that the proof of Lemma~\ref{lem:conFix} gives that if some element of a conjugacy class of $\Gal(K/\mathbb{Q})$ fixes $K_r$, then all of the elements of it do.

We also need a bound for the number of conjugacy classes in $\Gal(K/\mathbb{Q})$.

\begin{lemma}
\label{lem:conAll}
The number of conjugacy classes in $\Gal(K/\mathbb{Q})$ is $\ll_{a, b} m\tau(m')^2$.
\end{lemma}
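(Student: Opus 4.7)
The plan is to pass to the ambient ``triples'' group $T := (\mathbb{Z}/m\mathbb{Z})^\times \ltimes (\mathbb{Z}/m'\mathbb{Z})^2$ with the $\ast$-operation introduced in the proof of Lemma \ref{lem:conFix}, inside which $\Gal(K/\mathbb{Q})$ sits as a subgroup via the triple representation given there. By Lemma \ref{lem:indBound}, this subgroup has index bounded by $O_{a,b}(1)$. Writing $k(\cdot)$ for the number of conjugacy classes, for any subgroup $H$ of a finite group $T$ one has the bound
\[
k(H) \;=\; \frac{1}{|H|}\sum_{h \in H}|C_H(h)| \;\le\; \frac{1}{|H|}\sum_{h \in T}|C_T(h)| \;=\; [T:H]\,k(T),
\]
using the inclusion $C_H(h) \subseteq C_T(h)$ and nonnegativity of the terms. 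It therefore suffices to show $k(T) \le \phi(m)\tau(m')^2$.

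I would then compute $k(T)$ via the commuting-pair form of Burnside's formula, $k(T) = |T|^{-1}|\{(g,h) \in T^2 : gh = hg\}|$. Reading off the $\ast$-multiplication, the elements $(A,B,C)$ and $(X,Y,Z)$ commute in $T$ iff both linear congruences $(X-1)B \equiv (A-1)Y \pmod{m'}$ and $(X-1)C \equiv (A-1)Z \pmod{m'}$ hold. A standard count (via Smith normal form) gives exactly $m'\gcd(X-1,A-1,m')$ solutions $(B,Y) \in (\mathbb{Z}/m'\mathbb{Z})^2$ to the first (and similarly for $(C,Z)$), whence
\[
\bigl|\{(g,h)\in T^2 : gh=hg\}\bigr| \;=\; (m')^2 \sum_{A,X \in (\mathbb{Z}/m\mathbb{Z})^\times} \gcd(X-1,A-1,m')^2.
\]

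Next I would unfold the squared gcd using the Jordan-totient identity $n^2 = \sum_{d \mid n} J_2(d)$, where $J_2(d) = d^2 \prod_{p \mid d}(1 - 1/p^2)$, together with the elementary count $|\{A \in (\mathbb{Z}/m\mathbb{Z})^\times : d \mid A-1\}| = \phi(m)/\phi(d)$ valid for any $d \mid m' \mid m$. After dividing by $|T| = \phi(m)(m')^2$ the formula collapses cleanly to
\[
k(T) \;=\; \phi(m)\sum_{d\mid m'}\frac{J_2(d)}{\phi(d)^2} \;=\; \phi(m)\sum_{d\mid m'}\prod_{p\mid d}\frac{p+1}{p-1}.
\]
The final sum is multiplicative in $m'$ with local factor $1 + e(p+1)/(p-1)$ at $p^e$, and a short case check establishes $1 + e(p+1)/(p-1) \le (e+1)^2$ for every prime $p \ge 2$ and every $e \ge 0$. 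Taking the product over primes yields $\sum_{d\mid m'}J_2(d)/\phi(d)^2 \le \tau(m')^2$, giving $k(T) \le \phi(m)\tau(m')^2 \le m\tau(m')^2$, as required.

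The main subtlety I foresee is the first paragraph: since $\Gal(K/\mathbb{Q})$ is only a subgroup (not a quotient) of $T$, the trivial quotient inequality $k(\text{quotient}) \le k(\text{group})$ is not applicable, forcing the Burnside-style index inequality used above. Once that is set up, the computation inside $T$ is entirely mechanical, the only mild tightness being that the pointwise bound $1 + e(p+1)/(p-1) \le (e+1)^2$ is sharp at $p=2, e=1$, so no improvement from $\tau(m')^2$ to $\tau(m')^{2-\eta}$ can be extracted by this approach.
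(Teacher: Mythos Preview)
Your proof is correct and follows the same overall strategy as the paper: embed $G=\Gal(K/\mathbb{Q})$ into the ambient triples group $T$, use that $[T:G]=O_{a,b}(1)$ by Lemma~\ref{lem:indBound}, bound $k(G)\le[T:G]\,k(T)$, and then estimate $k(T)$. The paper cites Gallagher for the subgroup inequality $k(G)\ll_{[T:G]}k(T)$, whereas you derive it in one line from Burnside's class formula; this is a nice self-contained touch.

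The genuine difference is in how $k(T)$ is handled. The paper argues combinatorially: for each $A\in(\mathbb{Z}/m\mathbb{Z})^\times$ it exhibits a set $S_A$ of size $\tau((A-1,m'))\cdot(A-1,m')$ meeting every conjugacy class with first coordinate $A$, and then sums $|S_A|$ over $A$ to get $\le m\tau(m')^2$. You instead compute $k(T)$ \emph{exactly} via the commuting-pairs form of Burnside, obtaining the closed formula $k(T)=\phi(m)\sum_{d\mid m'}J_2(d)/\phi(d)^2=\phi(m)\sum_{d\mid m'}\prod_{p\mid d}\tfrac{p+1}{p-1}$, and then bound the local factors by $(e+1)^2$. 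Your route gives the marginally sharper $k(T)\le\phi(m)\tau(m')^2$ and an exact expression for $k(T)$, at the cost of a slightly more algebraic calculation; the paper's representative-set argument is perhaps more hands-on but yields only an upper bound. Both are elementary and of comparable length, and neither buys anything extra for the application.
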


\begin{proof}
Recall the notation of the proof of Lemma~\ref{lem:conFix}. Let $H$ be the set of all triplets of the form $(X, Y, Z) \in (\mathbb{Z}/m\mathbb{Z})^{\times}\times (\mathbb{Z}/m'\mathbb{Z})^2$ equipped with the operation $\ast$, so $G := \Gal(K/\mathbb{Q})$ is a subgroup of $H$. By Lemma~\ref{lem:indBound} the index of $G$ in $H$ is bounded. This implies by~\cite{gallagher} that the number of conjugacy classes of $G$ is at most a  constant times that of $H$. We therefore consider only the group $H$ from now on. In what follows, we identify the elements of $\mathbb{Z}/k\mathbb{Z}$ with integers in $\{1, 2, \ldots , k\}$.

For each $A \in (\mathbb{Z}/m\mathbb{Z})^{\times}$ let $g = (A-1, m)$ and let $S_A$ be the set of elements $(a, b, c) \in H$ with $a = A, b \mid g$ and $1 \le c \le g$. Let $S$ be the union of $S_A$ over all $A$. We claim that each element in $H$ is conjugate to at least one element in $S$.

Let $(A, B, C) \in H$ be given. As in the proof of Lemma~\ref{lem:conFix}, the conjugacy class of $(A, B, C)$ consists of elements of the form
$$(A, BX - (A-1)Y, CX - (A-1)Z).$$
There exists an element $X \in (\mathbb{Z}/m\mathbb{Z})^{\times}$ such that $BX \pmod{g}$ divides $g$. Fix such an element $X$. We may now choose $Y$ and $Z$ such that $BX - (A-1)Y \pmod{m'}$ divides $g$ and $CX - (A-1)Z \pmod{m'}$ belongs to $\{1, 2, \ldots , g\}$. For this choice of $X, Y$ and $Z$ we have $(A, BX - (A-1)Y, CX - (A-1)Z) \in S_A$, proving the claim.

The result follows by
$$|S| = \sum_{A \in (\mathbb{Z}/m\mathbb{Z})^{\times}} |S_A| \le \sum_{A = 1}^m \tau((A-1, m'))(A-1, m') = \sum_{g \mid m'} \frac{m}{m'}\phi(m'/g)\tau(g)g  \le m\tau(m')^2.$$
\end{proof}

\begin{lemma}\label{lem:conjugacy}
Let $C_{r, 1}, \ldots , C_{r, m_r}$ be the conjugacy classes of $\Gal(K/\mathbb{Q})$ which fix $K_r$ but which do not fix any of the fields $\mathbb{Q}(\zeta_{q}, a^{1/q}) \subset K$, where $q$ ranges over the prime divisors of $m'$. Then the $C_{r, i}$ are pairwise disjoint.
\end{lemma}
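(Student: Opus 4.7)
The plan is to reuse the explicit parametrization $\sigma \sim (X,Y,Z) \in (\mathbb{Z}/m\mathbb{Z})^\times \times (\mathbb{Z}/m'\mathbb{Z})^2$ set up in the proof of Lemma~\ref{lem:conFix} and to translate each of the two defining properties of the classes $C_{r,i}$ into linear constraints on the triple $(X,Y,Z)$. Since distinct conjugacy classes of $\Gal(K/\mathbb{Q})$ are automatically disjoint, the only content of the lemma is to rule out $C_{r,i} = C_{r',j}$ when $r \not\equiv r' \pmod{m'}$.

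First I would record two elementary translations. If $\sigma \sim (X,Y,Z)$ fixes $K_r = \mathbb{Q}(\zeta_m,(ba^{-r})^{1/m'})$, then since $\zeta_m \in K_r$ we must have $X = 1$, and the direct computation
\[
\sigma((ba^{-r})^{1/m'}) = \zeta_{m'}^{Z} b^{1/m'} \cdot \zeta_{m'}^{-rY} a^{-r/m'} = \zeta_{m'}^{Z-rY} (ba^{-r})^{1/m'}
\]
forces $Z \equiv rY \pmod{m'}$. Secondly, writing $\zeta_q = \zeta_m^{m/q}$ and $a^{1/q} = (a^{1/m'})^{m'/q}$ for a prime $q \mid m'$, one computes $\sigma(\zeta_q) = \zeta_q^{X}$ and $\sigma(a^{1/q}) = \zeta_q^{Y} a^{1/q}$, so $\sigma$ fixes $\mathbb{Q}(\zeta_q,a^{1/q})$ iff $X \equiv 1 \pmod{q}$ and $q \mid Y$. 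Combined with $X = 1$, the assumption that $\sigma$ does not fix any such subfield becomes simply $q \nmid Y$ for every prime $q \mid m'$; since in the setting of Section~\ref{sec:PCC} the integer $m'$ is itself prime, this reads $m' \nmid Y$.

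Next comes the case analysis. Suppose $\sigma \in C_{r,i} \cap C_{r',j}$ with $(r,i) \neq (r',j)$ and $0 \le r, r' < m'$. If $r = r'$, then two conjugacy classes share an element, hence coincide, forcing $i = j$, contradiction. Otherwise $r \not\equiv r' \pmod{m'}$; fixing both $K_r$ and $K_{r'}$ gives $Z \equiv rY \equiv r'Y \pmod{m'}$, whence $(r-r')Y \equiv 0 \pmod{m'}$. Since $m'$ is prime and does not divide $r-r'$, we conclude $m' \mid Y$, contradicting the condition $m' \nmid Y$ derived above.

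I do not anticipate any real obstacle: once Lemma~\ref{lem:conFix}'s parametrization is in hand, the argument is a short check that ``fixes / does not fix'' translates into congruences on $Y$ and $Z$. The one subtlety worth flagging is that the primality of $m'$ is essential in passing from $(r-r')Y \equiv 0 \pmod{m'}$ to $m' \mid Y$; for composite $m'$ one would only conclude $m'/\gcd(m',r-r') \mid Y$, which need not collide with the weaker condition ``$q \nmid Y$ for every prime $q \mid m'$''. Fortunately primality of $m'$ is built into the hypotheses of Conjecture~\ref{hyp}, so this suffices for our application.
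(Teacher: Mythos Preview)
Your argument is correct and follows the same route as the paper: from $\sigma$ fixing both $K_r$ and $K_{r'}$ one deduces that $\sigma$ fixes $a^{(r-r')/m'}$ (in your coordinates, $(r-r')Y\equiv 0\pmod{m'}$), and this is then played off against the hypothesis that $\sigma$ fixes no $\mathbb{Q}(\zeta_q,a^{1/q})$. The paper phrases this directly in terms of the field elements rather than via the $(X,Y,Z)$-parametrization, but the content is identical.

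One correction to your closing remark: the primality of $m'$ is \emph{not} actually needed here, and the paper's proof treats general $m'$. In your notation, from $(r-r')Y\equiv 0\pmod{m'}$ with $0<|r-r'|<m'$ you get $m'/g\mid Y$ where $g=\gcd(r-r',m')<m'$; since $m'/g>1$, any prime $q\mid m'/g$ satisfies $q\mid m'$ and $q\mid Y$, contradicting your condition ``$q\nmid Y$ for every prime $q\mid m'$''. So your own argument already handles composite $m'$---your worry that ``$m'/\gcd(m',r-r')\mid Y$ need not collide'' with that condition is unfounded. This matters because the paper explicitly remarks that the primality of $m'$ is used only later (in \eqref{e98} and \eqref{e99}), not in this lemma.
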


\begin{proof}
Assume that $\sigma \in C_{r_1, i_1} \cap C_{r_2, i_2}$ for some $1\leq r_1<r_2\leq m'$. Thus, $\sigma$ fixes both
$$K_{r_1} = \mathbb{Q}(\zeta_{m}, (ba^{-r_1})^{1/m'})$$
and
$$K_{r_2} = \mathbb{Q}(\zeta_{m}, (ba^{-r_2})^{1/m'}).$$
In particular, $\sigma$ fixes $a^{(r_1 - r_2)/m'}$. Let $g = (r_1 - r_2, m')$ and let $e \in \mathbb{Z}$ be such that $e(r_1 - r_2) \equiv g \pmod{m'}$. We see that $\sigma$ fixes
$$(a^{(r_1 - r_2)/m'})^e = q \cdot a^{g/m'},$$
where $q \neq 0$ is some rational number, and thus $\sigma$ fixes $a^{g/m'}$.

 Since $\sigma$ fixes both $a^{m'/m'}=1$ and $a^{g/m'}$, we see similarly as above that $\sigma$ also fixes $a^{(m', g)/m'}$. Noting that $(m', g)$ is a proper divisor of $m'$, we deduce that $\sigma$ fixes $a^{1/q}$ for some prime $q \mid m'$. Finally, note that $\sigma$ fixes $\mathbb{Q}(\zeta_{q}) \subset \mathbb{Q}(\zeta_{m'}) \subset K_{r_1}$. Thus, $\sigma$ fixes $\mathbb{Q}(\zeta_{q}, a^{1/q})$, which is a contradiction with the definition of $C_{r,i}$.
\end{proof}

\subsection{A Chebotarev-type estimate in the mean square}

Having established these preliminary lemmas we proceed with deriving the implication of PCC needed. To begin we need a further conjecture, the Artin conjecture (AC) on $L$-functions.

\textbf{AC:} The the Artin $L$-functions associated with a field extension $K/\mathbb{Q}$ are analytic in the whole complex plane.

Although Artin's conjecture remains open in general, we will see in a moment that it can be proved for fields $K$ that occur in our setting, so it is not a restricting assumption.

\begin{lemma}[Murty--Murty--Wong]\label{lem:murty}
Let $K/\mathbb{Q}$ be a field extension whose Artin $L$-functions satisfy GRH, AC, and PCC. For a conjugacy class $C$ of $G=\Gal(K/\mathbb{Q})$, define the Chebychev function
\begin{align*}
\psi_C(x):=\sum_{\substack{p^j\leq x\\(\frac{K/\mathbb{Q}}{p})= C}}\log p.
\end{align*}
Then we have 
\begin{align}\label{equ13}
\sum_{C}\frac{1}{|C|}\left(\psi_C(x)-\frac{|C|}{|G|}x\right)^2\ll x(\log x)\log^2\left([K:\mathbb{Q}]x\prod_{p\in P(K/\mathbb{Q})}p \right)\frac{|\textnormal{Irr}(G)|}{|G|},    
\end{align}
where $P(K/\mathbb{Q})$ is the set of rational primes that are ramified in $K$ and $\textnormal{Irr}(G)$ is the set of irreducible representations of $G$.
\end{lemma}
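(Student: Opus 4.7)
The plan is to follow the standard Murty--Murty--Wong approach: convert the mean square over conjugacy classes into a mean square over irreducible characters, apply the explicit formula for each Artin $L$-function, and control the resulting sums over zeros via the weighted pair correlation function.

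\textbf{Step 1: Passage from classes to characters.} First I would use Artin orthogonality. For each conjugacy class $C$ with representative $g_C$, the identity
\begin{align*}
\psi_C(x)=\frac{|C|}{|G|}\sum_{\chi\in \textnormal{Irr}(G)}\overline{\chi(g_C)}\,\psi(x,\chi)
\end{align*}
holds, where $\psi(x,\chi)$ is the Chebychev function of the Artin $L$-function $L(s,\chi,K/\mathbb{Q})$ (modulo a negligible contribution from ramified primes, controlled by $\log \prod_{p\in P(K/\mathbb{Q})} p$). Subtracting the main term $\tfrac{|C|}{|G|}x$, which arises solely from the trivial character, and applying column orthogonality $\sum_C |C|\overline{\chi_1(g_C)}\chi_2(g_C)=|G|\delta_{\chi_1=\chi_2}$, I obtain the Parseval-type identity
\begin{align*}
\sum_{C}\frac{1}{|C|}\Bigl(\psi_C(x)-\tfrac{|C|}{|G|}x\Bigr)^2 \;=\; \frac{1}{|G|}\sum_{\chi\in\textnormal{Irr}(G)}\bigl|\psi(x,\chi)-\delta_{\chi}x\bigr|^2+(\text{ramified contrib.}).
\end{align*}

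\textbf{Step 2: Explicit formula.} Invoking AC so that each $L(s,\chi,K/\mathbb{Q})$ is entire, I apply the standard truncated explicit formula: under GRH, for $2\le T\le x$,
\begin{align*}
\psi(x,\chi)-\delta_\chi x \;=\; -\sum_{|\gamma_\chi|\le T}\frac{x^{1/2+i\gamma_\chi}}{1/2+i\gamma_\chi} \;+\; O\!\left(\frac{x\log^2(\mathcal{A}_\chi(T)x)}{T}\right),
\end{align*}
where $\gamma_\chi$ runs over imaginary parts of nontrivial zeros of $L(s,\chi,K/\mathbb{Q})$. Squaring and summing over $\chi$ reduces the task to bounding $\sum_\chi\bigl|\sum_{|\gamma_\chi|\le T} x^{1/2+i\gamma_\chi}/(1/2+i\gamma_\chi)\bigr|^2$, plus remainder terms that are easily controlled by the bound on the number of zeros up to height $T$.

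\textbf{Step 3: The pair correlation bound.} This is the heart of the argument and where PCC is used. Expanding the square yields
\begin{align*}
\Bigl|\sum_{|\gamma|\le T}\frac{x^{1/2+i\gamma}}{1/2+i\gamma}\Bigr|^2 \;=\; x\sum_{|\gamma_1|,|\gamma_2|\le T}\frac{x^{i(\gamma_1-\gamma_2)}}{(1/2+i\gamma_1)(1/2-i\gamma_2)}.
\end{align*}
The key observation, due to Montgomery, is that the weight $w(u)=4/(4+u^2)$ is the Fourier transform of $e^{-2|t|}$, and via the resulting Plancherel-type identity together with Cauchy--Schwarz one can dominate this bilinear sum by an integral of $F(xe^t,T;\chi)$ against $e^{-2|t|}\,dt$. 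This step is where I expect the main technical difficulty: one must carefully exploit that the factor $|1/2+i\gamma|^{-2}$ behaves like the Fourier transform of $e^{-|t|}$ in order to absorb it into the weight $w$. Once this is done, the PCC bound $F(X,T;\chi)\ll_C \chi(1)^{-1} T\log\mathcal{A}_\chi(T)$ (valid for $1\le X\le T^{\chi(1)C}$) gives, after substituting back, a bound of the order $x\log(x)\chi(1)^{-1}\log\mathcal{A}_\chi(T)$ for each character.

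\textbf{Step 4: Assembly.} Summing the character contribution $\tfrac{1}{|G|}\sum_\chi (x\log x)\,\chi(1)^{-1}\log\mathcal{A}_\chi(T)$ and using $\log\mathcal{A}_\chi(T)\ll \log\bigl([K:\mathbb{Q}]T\prod_{p\in P(K/\mathbb{Q})}p\bigr)$, the factor $\chi(1)^{-1}$ conspires with the $\chi(1)$ count of characters of a given degree to leave $\tfrac{|\textnormal{Irr}(G)|}{|G|}$ as the total arithmetic density factor. Optimizing the truncation parameter $T$ (choosing $T$ a small power of $x$ to balance the main and remainder terms, while staying inside the range of validity of PCC) yields the stated bound. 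The main obstacle throughout is the Montgomery-type Fourier manipulation in Step 3, as every other step is a relatively formal application of standard analytic-number-theoretic machinery in the number field setting.
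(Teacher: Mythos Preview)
The paper does not prove this lemma at all: its entire proof is the single sentence ``This is \cite[Theorem 5.1]{murty-wong} with the choices $m_{\chi}=\chi(1)$, $c_{\chi}=\chi(1)^{-1}$, $r=1$ and $k=\mathbb{Q}$ that correspond to our assumptions.'' So you are supplying a full argument where the authors simply quote a black box.

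That said, your outline is essentially the argument carried out in the cited Murty--Murty--Wong paper: orthogonality of characters to pass from conjugacy classes to $\textnormal{Irr}(G)$ (your Step~1 Parseval identity is correct), the explicit formula under AC and GRH, and Montgomery's Fourier/Plancherel device to bound the bilinear zero sum by an integral of $F(X,T;\chi)$, which PCC then controls. One small point: in Step~4 your explanation of how the factor $|\textnormal{Irr}(G)|/|G|$ emerges is slightly garbled. What actually happens is that PCC gives $|\psi(x,\chi)-\delta_{\chi}x|^2\ll x(\log x)\chi(1)^{-1}\log\mathcal{A}_{\chi}(T)$, and since $\log\mathcal{A}_{\chi}(T)=\log A_{\chi}+\chi(1)\log T$ with $\log A_{\chi}\ll \chi(1)\log\bigl([K:\mathbb{Q}]\prod_{p\in P(K/\mathbb{Q})}p\bigr)$ via the conductor--discriminant formula, the $\chi(1)^{-1}$ cancels against the $\chi(1)$ in $\log\mathcal{A}_{\chi}(T)$, leaving a bound uniform in $\chi$; summing over $\chi$ and dividing by $|G|$ then produces $|\textnormal{Irr}(G)|/|G|$. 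With that adjustment your sketch is a faithful reconstruction of the cited proof.
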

\begin{proof}
This is~\cite[Theorem 5.1]{murty-wong} with the choices $m_{\chi}=\chi(1)$, $c_{\chi}=\chi(1)^{-1}$, $r=1$ and $k=\mathbb{Q}$ that correspond to our assumptions.
\end{proof}

Although Artin's conjecture remains unsolved for general $L$-functions, the next lemma says that AC holds for all Kummer-type extensions.

\begin{lemma} Hypothesis AC holds whenever $K/\mathbb{Q}$ is a Kummer-type extension.
\end{lemma}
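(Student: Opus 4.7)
The plan is to prove AC for a Kummer-type $K/\mathbb{Q}$ by first establishing that $G:=\Gal(K/\mathbb{Q})$ is metabelian, then invoking Shoda's theorem to conclude that every irreducible complex representation of $G$ is monomial, and finally combining the inductive property of Artin $L$-functions with Hecke's theorem on abelian $L$-functions.

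Write $K=\mathbb{Q}(\zeta_m,a_1^{1/m_1},\ldots,a_k^{1/m_k})$ with $m_i\mid m$, and set $F:=\mathbb{Q}(\zeta_m)$. Since $m_i\mid m$, the field $F$ already contains $\zeta_{m_i}$ for every $i$, so $K/F$ is a classical (multi-variable) Kummer extension. Its Galois group $N:=\Gal(K/F)$ therefore embeds via the Kummer pairing into $\prod_i \mu_{m_i}$, and is in particular abelian. On the other hand, $\Gal(F/\mathbb{Q})\cong(\mathbb{Z}/m\mathbb{Z})^\times$ is abelian. Hence the exact sequence $1\to N\to G\to G/N\to 1$ has abelian kernel and abelian quotient, i.e., $G$ is metabelian.

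Next, I would apply Shoda's theorem (see, e.g., Isaacs, \emph{Character Theory of Finite Groups}, Theorem 6.22): every finite metabelian group is an M-group. Thus each irreducible complex character $\chi$ of $G$ can be written in the form $\chi=\mathrm{Ind}_H^G\psi$ for some subgroup $H\le G$ and some linear character $\psi$ of $H$.

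Given this structural input the conclusion is now routine. By the inductive property of Artin $L$-functions,
$$L(s,\chi,K/\mathbb{Q})=L(s,\mathrm{Ind}_H^G\psi,K/\mathbb{Q})=L(s,\psi,K/K^H),$$
where $K^H$ denotes the fixed field of $H$. Since $\psi$ is one-dimensional, Artin reciprocity identifies the right-hand side with a Hecke $L$-function of $K^H$ attached to a finite-order Hecke character, and Hecke's theorem yields an entire continuation whenever that character is nontrivial. If $\chi$ is a nontrivial irreducible representation of $G$, then Frobenius reciprocity gives $\langle\psi,1_H\rangle_H=\langle\chi,1_G\rangle_G=0$, so $\psi\neq 1_H$, and the associated Hecke $L$-function is entire.

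The main (mild) obstacle is simply citing Shoda's theorem in exactly the form needed, and being careful that the intermediate field $F$ really does contain all the $\zeta_{m_i}$ so that $K/F$ is Kummer in the strict sense that forces $N$ to be abelian. Everything else in the argument is either formal (the inductive property) or a standard invocation of class field theory.
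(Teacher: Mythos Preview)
Your proof is correct and follows essentially the same strategy as the paper's: both arguments establish that $G=\Gal(K/\mathbb{Q})$ is metabelian by exhibiting the normal abelian subgroup $N=\Gal(K/\mathbb{Q}(\zeta_m))$ with abelian quotient $(\mathbb{Z}/m\mathbb{Z})^\times$, and then deduce AC from this structural fact. The only difference is in packaging the implication ``metabelian $\Rightarrow$ AC'': the paper simply cites \cite{wong}, whereas you unpack it via Shoda's theorem (metabelian groups are monomial) together with the inductive property of Artin $L$-functions and Hecke's theorem on the analyticity of abelian $L$-functions. Your write-up of the metabelian step is in fact somewhat cleaner than the paper's.
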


\begin{proof}
It is known (see~\cite{wong}) that Artin's conjecture holds for the field extension $K/\mathbb{Q}$ whenever its Galois group $G:=\Gal(K/\mathbb{Q})$ is metabelian, meaning that it has a normal subgroup $A$ such that both $A$ and $G/A$ are Abelian. Thus, for any given Kummer-type extension field $K=\mathbb{Q}(\zeta_m,a_1^{m_1},\ldots, a_k^{m_k})$, it suffices to find a subfield $K_0$ such that $K/K_0$ and $K_0/\mathbb{Q}$ are both Abelian extensions and $\Gal(K_0/\mathbb{Q})$ is normal in $\Gal(K/\mathbb{Q})$. We choose $K_0=\mathbb{Q}(\zeta_m)$. Then $\Gal(K_0/\mathbb{Q})$ is cyclic, so certainly Abelian. To show that $K/K_0$ is also Abelian, it suffices to show that each $L_i:=\mathbb{Q}(\zeta_m,a_i^{m_i})/\mathbb{Q}(\zeta_m)$ is Abelian, since the compositum of Abelian extensions is also Abelian. Again, $L_i$ is a cyclic extension, and therefore Abelian. What remains to be shown then is that $A:=\Gal(K_0/\mathbb{Q})$ is a normal subgroup of $G=\Gal(K/\mathbb{Q})$. As in the proof of Lemma~\ref{lem:conFix}, for each $\sigma\in G$ we write \begin{align*}
\sigma\sim (e,e_1,\ldots, e_n)\in (\mathbb{Z}/m\mathbb{Z})^{\times}\times \prod_{i\leq n}(\mathbb{Z}/m_i\mathbb{Z}),    
\end{align*}
where this tuple is uniquely determined by the conditions $\sigma(\zeta_m)=\zeta_m^{e}$ and $\sigma(a_i^{1/m_i})=\zeta_m^{e_i}a_i^{1/m_i}$ for all $i\leq k$. The relation $\sim$ is an equivalence relation on $G$. If $\sigma'\in A$ is arbitrary, we may write $\sigma'\sim (a,0,\ldots, 0)$ for some $a\in (\mathbb{Z}/m\mathbb{Z})^{\times}$. Now $\sigma^{-1}\sim (e^{-1}, -e_1e^{-1},\ldots,-e_ne^{-1})$, so $\sigma \sigma'\sigma^{-1}=(a,0,\ldots, 0)=\sigma'$. Since $\sigma\in G$, $\sigma'\in A$ are arbitrary, the claim follows.
\end{proof}

\begin{proposition}
\label{prop:murty-wong} Assume GRH and PCC. For $K=\mathbb{Q}(\zeta_{m},a^{1/m'},b^{1/m'})$, we have
\begin{align*}
\sum_{C}\frac{1}{|C|}\left(\pi_{C}(x)-\frac{|C|}{|G|}\Li(x)\right)^2\ll_{a,b,h} x(\log x)^3\frac{\tau(m')^2}{m'\phi(m)}.    
\end{align*}
\end{proposition}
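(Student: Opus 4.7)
The plan is to derive Proposition \ref{prop:murty-wong} from the Murty--Wong second-moment estimate (Lemma \ref{lem:murty}) in three steps: (a) verifying its hypotheses for $K$, (b) controlling the arithmetic invariants of $K/\mathbb{Q}$ that appear on the right-hand side of that estimate, and (c) converting from the $\psi_C$-formulation to the $\pi_C$-formulation by partial summation.

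For step (a), I would observe that $K=\mathbb{Q}(\zeta_m,a^{1/m'},b^{1/m'})$ is manifestly a Kummer-type extension (with $a,b$ generating the Kummer part and $m$ the cyclotomic part). Hence GRH and PCC hold for the Artin $L$-functions attached to $K$ by our standing hypotheses, and AC holds by the lemma immediately preceding. Thus the three input hypotheses of Lemma \ref{lem:murty} are all met for $K$.

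For step (b), I plan to use Lemma \ref{lem:indBound} to obtain $|G|=[K:\mathbb{Q}]\asymp_{a,b}(m')^{2}\phi(m)$, and to note that every prime ramifying in $K/\mathbb{Q}$ divides $m|ab|$, so $\prod_{p\in P(K/\mathbb{Q})}p\ll_{a,b} m$. Consequently, for $x\ge m$ the logarithmic factor in Lemma \ref{lem:murty} is $\log^{2}\bigl([K:\mathbb{Q}]\cdot x\cdot\prod_{p\in P(K/\mathbb{Q})}p\bigr)\ll_{a,b}(\log x)^{2}$. For the character count, I would invoke Lemma \ref{lem:conAll} (bounding $|\textnormal{Irr}(G)|$ by the number of conjugacy classes of $G$) together with the lower bound $|G|\gg (m')^{2}\phi(m)$, arriving at a ratio $|\textnormal{Irr}(G)|/|G|$ of the claimed shape $\tau(m')^{2}/(m'\phi(m))$ up to the $a,b$-constant. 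Substituting into Lemma \ref{lem:murty} then yields
\[
\sum_{C}\frac{1}{|C|}\Bigl(\psi_{C}(x)-\tfrac{|C|}{|G|}x\Bigr)^{2}\ \ll_{a,b}\ x(\log x)^{3}\,\frac{\tau(m')^{2}}{m'\phi(m)}.
\]

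For step (c), under GRH the contribution of higher prime powers to $\psi_{C}(x)$ is $O(\sqrt{x}\log x)$, so $\psi_{C}$ may be replaced by $\vartheta_{C}(x)=\sum_{p\le x,\,\textup{Frob}_{p}\in C}\log p$ with negligible loss (after squaring and summing with weight $1/|C|$, the resulting extra term is absorbed into the target). Partial summation gives
\[
\pi_{C}(x)-\tfrac{|C|}{|G|}\Li(x)=\frac{\vartheta_{C}(x)-\frac{|C|}{|G|}x}{\log x}+\int_{2}^{x}\frac{\vartheta_{C}(t)-\frac{|C|}{|G|}t}{t(\log t)^{2}}\,dt+O(1).
\]
I would then square, sum with weight $1/|C|$, and treat the two main pieces separately: the first is bounded directly by $(\log x)^{-2}$ times the $\psi_{C}$-bound at $x$, and the second is handled by Cauchy--Schwarz with weight $1/(t(\log t)^{2})$ (using $\int_{2}^{x}\frac{dt}{t(\log t)^{2}}=O(1)$), which reduces matters to integrating the $\psi_{C}$-bound applied at each $t\in[2,x]$; the resulting $t$-integral contributes at most one additional factor of $\log x$, matching the target.

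The main obstacle is pinning down the conjugacy-class count with sufficient accuracy to land on the stated exponent of $m'$ (as opposed to a weaker power), which is exactly what Lemmas \ref{lem:conFix}--\ref{lem:conAll} were designed to provide; the remaining work is a standard partial-summation exercise.
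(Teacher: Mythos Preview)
Your proposal is correct and follows the paper's proof essentially verbatim: the same three ingredients (verifying GRH/AC/PCC for the Kummer-type field, bounding $|G|$, the ramified primes, and $|\textnormal{Irr}(G)|$ via Lemmas~\ref{lem:indBound} and~\ref{lem:conAll}, then partial summation with Cauchy--Schwarz on the integral term) appear in the same order with the same estimates. The only cosmetic differences are that you pass through $\vartheta_C$ explicitly and state the ramification fact directly rather than via discriminants, neither of which changes the argument.
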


\begin{proof}
It is well known that $|\textnormal{Irr(G)}|$ equals to the number of conjugacy classes in $G$, which by Lemma~\ref{lem:conAll} is $\ll m\tau(m')^2$. Further, we have $|G|=[K:\mathbb{Q}]\gg (m')^2\phi(m)$ by Lemma~\ref{lem:indBound}. Lastly, note that if $p\in P(K/\mathbb{Q})$, then $p\mid \textnormal{disc}(K:\mathbb{Q})$ and that by~\cite[page 490]{cooke} we have
\begin{align*}\textnormal{disc}(K:\mathbb{Q})\mid (\textnormal{disc}(\mathbb{Q}(\zeta_m):\mathbb{Q})\cdot \textnormal{disc}(\mathbb{Q}(a^{1/m}):\mathbb{Q})\cdot \textnormal{disc}(\mathbb{Q}(b^{1/m}):\mathbb{Q}))^{R}
\end{align*}
for some $R$. A computation of the discriminants shows that
\begin{align*}
p&\mid \textnormal{disc}(\mathbb{Q}(\zeta_{m}):\mathbb{Q})\Longrightarrow p\mid m\\
p&\mid \textnormal{disc}(\mathbb{Q}(a^{1/m}):\mathbb{Q})\Longrightarrow p\mid a\\
p&\mid \textnormal{disc}(\mathbb{Q}(b^{1/m}):\mathbb{Q})\Longrightarrow p\mid b,
\end{align*}
so
\begin{align*}
\prod_{p\in P(K/\mathbb{Q})}p\ll_{a,b} m\leq x.    
\end{align*}
In conclusion, we have the bound
\begin{align}\label{equ16}
\sum_{C}\frac{1}{|C|}\left(\psi_C(x)-\frac{|C|}{|G|}x\right)^2\ll x(\log x)^3\frac{\tau(m')^2}{m'\phi(m)}.    
\end{align}
To pass from $\psi_C(x)$ to $\pi_C(x)$, we use partial summation. Note that
\begin{align*}
\psi_C(x)=\sum_{\substack{p^j\leq x\\(\frac{K/\mathbb{Q}}{p})=C}}\log p= \sum_{\substack{p\leq x\\(\frac{K/\mathbb{Q}}{p})=C}}\log p+O(\sqrt{x}),  
\end{align*}
so by partial summation
\begin{align*}
\pi_C(x)=\frac{\psi_C(x)}{\log x}+\int_{2}^x \frac{\psi_C(t)}{t(\log t)^2}\, dt+O(\sqrt{x}).    
\end{align*}
This gives
\begin{align*}
\pi_C(x)-\frac{|C|}{|G|}\Li(x)=\frac{\psi_C(x)-\frac{|C|}{|G|}x}{\log x}+\int_{2}^x \frac{\psi_C(t)-\frac{|C|}{|G|}\Li(t)}{t(\log t)^2}\, dt+O(\sqrt{x}),    
\end{align*}
and further 
\begin{align}\label{equ14}
\left(\pi_C(x)-\frac{|C|}{|G|}\Li(x)\right)^2\ll\left(\frac{\psi_C(x)-\frac{|C|}{|G|}x}{\log x}\right)^2+\left(\int_{2}^x \frac{\psi_C(t)-\frac{|C|}{|G|}\Li(t)}{t(\log t)^2}\, dt\right)^2+x.    
\end{align}
By Cauchy--Schwarz, we can bound
\begin{align}\label{equ15}\begin{split}
\left(\int_{2}^x \frac{\psi_C(t)-\frac{|C|}{|G|}\Li(t)}{t(\log t)^2}\, dt\right)^2&\leq \left(\int_{2}^x\frac{dt}{t(\log t)^2}\right)\left(\int_{2}^x \frac{(\psi_C(t)-\frac{|C|}{|G|}\Li(t))^2}{t(\log t)^2}\, dt\right)\\
&\ll \int_{2}^x \frac{(\psi_C(t)-\frac{|C|}{|G|}\Li(t))^2}{t(\log t)^2}\, dt. 
\end{split}
\end{align}
Now, multiplying~\eqref{equ14} by $1/|C|$, summing over $C$ and using~\eqref{equ15},~\eqref{equ16}, we get
\begin{align*}
\sum_{C}\frac{1}{|C|}\left(\pi_C(x)-\frac{|C|}{|G|}\Li(x)\right)^2\ll_{a,b} x(\log x)^3\frac{\tau(m')^2}{m'\phi(m)}.    
\end{align*}
as desired.
\end{proof}

We are now ready to prove Hypothesis~\ref{hyp} under PCC.

\begin{proof}[Proof of Theorem~\ref{thm:pcc} assuming PCC] 
We may assume that we are in the regime $y\leq m^{10}$, say, since otherwise the GRH-conditional estimate in~\eqref{e104} is good enough.

Recall that $C_{r, 1}, \ldots , C_{r, m_r}$ are the conjugacy classes of $\Gal(K/\mathbb{Q})$ which fix $K_r$ but which do not fix any of the fields $\mathbb{Q}(\zeta_{q}, a^{1/q}) \subset K$, where $q$ ranges over the prime divisors of $m'$. By Lemma~\ref{lem:conFix}, we have $|C_{r, i}| \le m'$ and $m_r \le \tau(m')$ for all $r$ and $i$. Let $\pi_{C_{r, i}}(x)$ denote the number of primes $p \le x$ such that the Artin symbol of $p$ with respect to $K$ is $C_{r, i}$. We now have
\begin{align*}
&\sum_{r=0}^{m'-1} |\{p \le y : p \,\,\textnormal{splits completely in}\,\, \mathbb{Q}(\zeta_m, (ba^{-r})^{1/m'})\}|^2\\
&=\sum_{r \pmod{m'}} \left(\sum_{i = 1}^{m_r}\pi_{C_{r, i}}(y)\right)^2\\
&\ll  \sum_{r \pmod{m'}} \left(\sum_{i = 1}^{m_r}\left( \pi_{C_{r, i}}(y) - \Li(y)\frac{|C_{r, i}|}{[K : \mathbb{Q}]}\right)\right)^2+\sum_{r \pmod{m'}} \left( \Li(y)\frac{|C_{r, i}|}{[K : \mathbb{Q}]}\right)^2.  
\end{align*}
By Lemmas~\ref{lem:indBound} and~\ref{lem:conFix}, and the assumption that $m'$ is prime, the latter sum is 
\begin{align}\label{e98}
\ll \frac{\tau(m')^2y^2}{m'\phi(m)^2(\log y)^2}\ll_{\epsilon} \frac{y^2}{\phi(m)^2(\log y)^2m'}.    
\end{align}
When it comes to the first sum, we estimate using Cauchy--Schwarz that
\begin{align*}
&\sum_{r \pmod{m'}} \left(\sum_{i = 1}^{m_r}\left( \pi_{C_{r, i}}(y) - \Li(y)\frac{|C_{r, i}|}{[K : \mathbb{Q}]}\right)\right)^2\\ 
&\ll \sum_{r \pmod{m'}} m_r \sum_{i = 1}^{m_r} \Big(\pi_{C_{r, i}}(y) - \Li(y)\frac{|C_{r, i}|}{[K : \mathbb{Q}]}\Big)^2 \\
\nonumber&\ll \tau(m') \sum_{r \pmod{m'}} \sum_{i = 1}^{m_r} \Big(\pi_{C_{r, i}}(y) - \Li(y)\frac{|C_{r, i}|}{[K : \mathbb{Q}]}\Big)^2 \\
&\ll \tau(m')m' \sum_{r \pmod{m'}} \sum_{i = 1}^{m_r} \frac{1}{|C_{r, i}|} \Big(\pi_{C_{r, i}}(y) - \Li(y)\frac{|C_{r, i}|}{[K : \mathbb{Q}]}\Big)^2.
\end{align*}

Note then that the conjugacy classes are disjoint by Lemma~\ref{lem:conjugacy}. Hence, Proposition~\ref{prop:murty-wong} produces for the previous expression a bound of
\begin{align}\label{e99}\begin{split}
&\ll   \frac{\tau(m')^3}{\phi(m)}y(\log y)^3\ll \frac{y^2}{\phi(m)^2(\log y)^2}\cdot \frac{m(\log y)^5}{y}\\
&\ll \frac{y^2}{\phi(m)^2(\log y)^2}\max\left\{\frac{1}{(\log y)^2},\frac{1}{m'}\right\},  
\end{split}
\end{align}
since $y\geq m\exp((\log m)^{1/2})$, $\phi(m)\gg m/\log \log m$, $\tau(m')= 2$ and $m'\leq m^{\epsilon}$. Combining~\eqref{e98} and~\eqref{e99}, the claim follows.
\end{proof}

\section{Almost prime values of shifted exponentials}

In this section, we prove Theorem~\ref{thm:almostprime}. The proof follows along the same lines as the proof of Theorem~\ref{thm:main}.

\begin{proof}[Proof of Theorem~\ref{thm:almostprime}] Firstly, we deal with the simple cases $|b|\leq 1$. The case $b=0$ is trivial. For the cases $b=\pm 1$, we observe that by the Hardy--Ramanujan theorem almost all $n\leq x$ have $(1+o(1))\log \log x$ prime divisors, and if $p\mid n$ is odd, then $a^{p}\pm 1\mid a^n\pm 1$ for either choice of the sign $\pm$. To conclude, we apply a theorem of Zsigmondy~\cite{zsigmondyprimes}, according to which each term in the sequence $a^n-1$ for $n\geq 7$ (and also for $n\leq 6$ with a few exceptions) has a primitive divisor, that is, a prime factor $q\mid a^n-1$ that does not divide $a^m-1$ for any $1\leq m<n$.  

Now let $|b|>1$. Let the sets $A_p$ be defined as in Section~\ref{sec:chungBound}. In the course of proving Theorem~\ref{thm:main}, we in fact proved more strongly that
\begin{align*}
\Pr\left(\left\{n\leq x: \Big|\sum_{p\in S_{\leq \sqrt{x}}}1_{A_p}(n) -\mu\big|\geq \varepsilon \mu\right\}\right)=o_{\varepsilon}((\log \log x)^2),
\end{align*}
where 
\begin{align*}
 \mu :=\sum_{p\in S_{\leq \sqrt{x}}}\Pr(A_p)=(c+o(1))\log \log x   
\end{align*}
for some constant $c=c_{a,b}>0$. Indeed, this follows from Lemma~\ref{lem:2moment} and formulas~\eqref{equ5} and~\eqref{equ6}. The claim is now proved.
\end{proof}

\section{The case of prime exponents}

We turn to the proof of Theorem~\ref{thm:primeexp}. Most steps go through similarly as in the case of Theorem~\ref{thm:main}, but there are a few extra complications, namely the set $S$ from Section~\ref{sec:equiLog} needs to be defined differently, using the ``$W$-trick'', and therefore we will also need Lemma~\ref{lem:eulerphi} below.

\begin{proof}[Proof of Theorem~\ref{thm:primeexp}] 

Let $h_a$ and $h_b$ be the largest integers such that $a$ (respectively $b$) is a perfect power\footnote{Note that here we can no longer assume that $a$ is not a perfect power.} of order $h_a$ (respectively $h_b$). Define $h = h_ah_b$. Let $W=\prod_{p\leq w,p\nmid 2abh}p$, where $w$ is a large parameter. We redefine the set $S$ in Section~\ref{sec:equiLog} as
\begin{align*}
S:&=\{p\in \mathbb{P}:\,\, p\equiv 1 \pmod{|4abh|},\,\, p\equiv 2\pmod W,\,\, \ord_p(a)=\ord_p(b)=\frac{p-1}{2h}\},    
\end{align*}
As before, define $S_{m,d,r}$ by~\eqref{equ17}. We observe then that for $p\in S$ we have $(\ell(p),\ord_p(a))=1$. Indeed, if this did not hold, we would have
\begin{align*}
\ord_p(a^{\ell(p)})\leq \frac{\ord_p(a)}{2}=\frac{\ord_p(b)}{2}<\ord_p(b), 
\end{align*}
which is absurd as $a^{\ell(p)}\equiv b\pmod p$.

We begin by proving that $d_{\mathbb{P}}(S)$ exists and is positive, and furthermore that $d_{\mathbb{P}}(S_{m,d,r})$ is independent of $r$ as long as $r$ is coprime to $m$. We remark that Lenstra's results in~\cite{lenstra} do not immediately apply to our situation. In~\cite{lenstra}, the results concern the order of the reduction of a single multiplicative subgroup of $\mathbb{Q}^{\times}$ modulo primes, while in our case we wish to control simultaneously the order of both $a$ and $b$. However, Lenstra's methods may be adapted to this more general setting of two multiplicative subgroups, the details of the proofs being given in~\cite{jarviniemi}. We thus have analogues of Lenstra's results in this setting.

Similarly as in Section~\ref{sec:equiLog}, define
$$F_{m, d, r} := \mathbb{Q}(\zeta_{|4abh|}, \zeta_{W}, a^{1/2h}, b^{1/2h}, (ba^{-r})^{1/2hm}),$$
$$L_{\ell, a} := \mathbb{Q}(\zeta_{q(\ell)}, a^{1/q(\ell)})$$
and
$$L_{\ell, b} := \mathbb{Q}(\zeta_{q(\ell)}, b^{1/q(\ell)}),$$
where $q(\ell)$ is the smallest power of $\ell$ not dividing $2h$, and let $L_{n, a}$ (respectively $L_{n, b}$) denote the compositum of $L_{\ell, a}$ (respectively $L_{\ell, b}$) over the primes $\ell \mid n$. Now $p \in S$ if and only if the Artin symbol of $p$ with respect to $F_{1, 0, 0}$ belongs to a suitable conjugacy class $C$ of $\Gal(F_{1, 0, 0}/\mathbb{Q})$ (namely the one fixing
$$F_{1, 1, 0}' := \mathbb{Q}(\zeta_{|4abh|}, a^{1/2h}, b^{1/2h})$$
and mapping $\zeta_W \to \zeta_W^2$) and $p$ does not split completely in any of $L_{\ell, a}$ and $L_{\ell, b}$ for any prime $\ell$. If $a_n$ is defined as the local density of primes $p$ satisfying this condition for all $\ell \mid n$, by the analogue of Lenstra's result~\eqref{equ4.1} it suffices to show that the local densities $a_n \neq 0$ for all $n$ in order to have $d_{\mathbb{P}}(S) \neq 0$. 

Proceeding as in Section~\ref{sec:equiLog}, one has to check two conditions. The first condition is $a_1 \neq 0$, i.e. that $C \neq \emptyset$. To do this one notes that the largest Abelian subfield of $F_{1, 1, 0}'$ does not intersect  $\mathbb{Q}(\zeta_W)$ nontrivially, and thus $[F_{1, 1, 0}'(\zeta_W) : F_{1, 1, 0}'] = \phi(W)$, implying that 
\begin{align} \label{equ18}
\Gal(F_{1, 1, 0}/\mathbb{Q}) \simeq \Gal(F_{1, 1, 0}'/\mathbb{Q}) \times \Gal(\mathbb{Q}(\zeta_W)/\mathbb{Q}).
\end{align}
It follows that $|C| = 1$.

The second condition to check is that for any prime $\ell$ and squarefree $n$ we have
$$[K'(a^{1/q(\ell)}, b^{1/q(\ell)}) : K'] = \ell^2,$$
where
$$K' := \mathbb{Q}(\zeta_{|4abh|}, \zeta_W, a^{1/2h}, b^{1/2h}, a^{1/q(n/\ell)}, b^{1/q(n/\ell)}).$$
We do this by showing, more generally, that for any positive integers $V, s$ and $t$ satisfying $4abh, s, t \mid V$ and $2h \mid s, t$ we have
\begin{align}\label{equ19}
[\mathbb{Q}(\zeta_V, a^{1/s}, b^{1/t}) : \mathbb{Q}] = \phi(V)\frac{s}{2h_a}\frac{t}{2h_b}
\end{align}
(cf. equation~\eqref{equ4.2}). The upper bound implicit in the result is easy to establish. For the lower bound, apply the result in~\cite{garrett} to the numbers of the form $a^{e/s}b^{f/t}, 0 \le  e < s/2h_a, 0 \le f < t/2h_b$ over $\mathbb{Q}(\zeta_V)$. We want to check that
$$a^{e/s}b^{f/t} \in \mathbb{Q}(\zeta_V)$$
for such $e$ and $f$ implies that $e = f = 0$. Similarly to the proof of~\eqref{equ4.2}, we obtain the divisibility relations
$$s \mid 2h_ae, t \mid 2h_bf,$$
from which the result follows.

Having established $d_{\mathbb{P}}(S) \neq 0$ we now proceed to proving that $d_{\mathbb{P}}(S_{m, d, r})$ is independent of $r$ (for $(r, m) = 1$). By a variant of Lenstra's product formula in~\eqref{equ4.1} for multiple variables one has
$$d_{\mathbb{P}}(S_{m, d, r}) = a_n \prod_{\substack{\ell \nmid n  \\ \ell \in \mathbb{P}}} \left(1 - \frac{C(\ell)}{[L_{\ell, a}L_{\ell, b} : \mathbb{Q}]}\right),$$
where $C(\ell)$ is the number of conjugacy classes of $\Gal(L_{\ell, a}L_{\ell, b}/\mathbb{Q})$ fixing at least one of $L_{\ell, a}$ and $L_{\ell, b}$. Since the terms of the infinite product do not depend on $r$, it suffices to check that the local densities $a_n = a_n(r)$ do not depend on $r$. 

Clearly for any prime $\ell \mid W$ and $n$ not divisible by $\ell$ we have $a_n(r) = a_{n/\ell}(r)$, as the condition $p \equiv 2 \pmod{W}$ guarantees that $p$ does not split completely in $\mathbb{Q}(\zeta_{\ell})$ and thus not in $L_{\ell, a}$ or $L_{\ell, b}$. We may therefore assume that $(n, W) = 1$.

Similarly as one proves $|C| = 1$ via~\eqref{equ18}, one obtains
\begin{align*}
\Gal(F_{m, d, r}L_{k_a, a}L_{k_b, b}/\mathbb{Q}) = \Gal(F_{m, d, r}'L_{k_a, a}L_{k_b, b}/\mathbb{Q}) \times \Gal(\mathbb{Q}(\zeta_W)/\mathbb{Q})
\end{align*}
for any $k_a, k_b \mid n$, where
$$F_{m, d, r}' := \mathbb{Q}(\zeta_{|4abh|}, a^{1/2h}, b^{1/2h}, (ba^{-r})^{1/2hm}).$$
We thus see that there is exactly one element $\sigma \in \Gal(F_{m, d, r}L_{k_a, a}L_{k_b, b}/\mathbb{Q})$ which fixes $L_{k_a, a}L_{k_b, b}$ and whose restriction to $F_{m, d, r}$ belongs to $C$. This leads to the following analogue of~\eqref{equ4.1.5}:
$$a_n(r) = \frac{1}{[F_{m, d, r}L_{n, a}L_{n, b} : \mathbb{Q}]}\sum_{k_a \mid n} \sum_{k_b \mid n} \mu(k_a)\mu(k_b)[F_{m, d, r}L_{k_a, a}L_{k_b, b} : \mathbb{Q}].$$

To conclude the proof one proves an analogue of~\eqref{equ4.2} and~\eqref{equ19}. This time one claims that
$$[\mathbb{Q}(\zeta_V, a^{1/s}, b^{1/t}, (ba^{-r})^{1/u}) : \mathbb{Q}] = \phi(V)\frac{s}{2h_a}\frac{t}{2h_b}\frac{u}{(s, t, u)},$$
when $4abh, s, t, u \mid V, 2h_a \mid s, 2h_b \mid t, 2h_ah_b \mid u$ and $(r, u) = 1$. As before, the implied upper bound is easy. For the lower bound, we once again apply~\cite{garrett} and consider the numbers of the form
$$a^{e/s}b^{f/t}(ba^{-r})^{g/u},$$
where $0 \le e < s/2h_a, 0 \le f < t/2h_b$ and $0 \le g < u/(s, t, u)$. Assume that such a number belongs to a cyclotomic field. Define
$$C := \left(a^{e/s}b^{f/t}(ba^{-r})^{g/u}\right)^{stu} = a^{etu - rgst}b^{sfu + stg}.$$
The integer $C$ must be a perfect power of order $stu/2$, and so
\begin{align}\label{equ19.1}
stu \mid 2h_a(etu - rgst)
\end{align}
and
\begin{align}\label{equ19.2}
stu \mid 2h_b(sfu + stg).
\end{align}
The rest is elementary. As $2h_a \mid s$, the first condition gives us
$$2h_atu \mid 2h_argst,$$
and similarly from the second condition
$$2h_bsu \mid 2h_bstg.$$
We now have, using $(r, u) = 1$, the relation $u \mid g(s, t)$ and thus $u/(s, t, u) \mid g$. This implies $g = 0$. Plugging this into~\eqref{equ19.1} and~\eqref{equ19.2} gives $s \mid 2h_ae$ and $t \mid 2h_bf$, and thus $e = f = 0$.

Thus all of the results of Section~\ref{sec:equiLog} work with this new definition of $S$. When it comes to Section~\ref{sec:chungBound}, we redefine $\textnormal{Pr}$ as the uniform probability measure on $[1,x]\cap \mathbb{P}$. We also redefine
\begin{align*}
A_p:=\{p'\in [1,x]\cap \mathbb{P}:\,\, p'\equiv \ell(p)\pmod{\ord_p(a)},\, p'\neq \ell(p)\}.     
\end{align*}
The second moment method (Lemma~\ref{lem:2moment}) gives an upper bound
\begin{align*}
 \ll \frac{\sum_{p, q \in S_{\leq \log x}} \Pr(A_p \cap A_q)}{\left(\sum_{p \in S_{\leq \log x}} \Pr(A_p)\right)^2}-1
\end{align*}
for the probability that $a^p-b$ is prime with $p\leq x$. By the Siegel--Walfisz theorem and the fact that $p\in S$ implies $(\ell(p),\ord_p(a)) = 1$, we see that the previous expression is
\begin{align}\label{equ23}
=\frac{\sum_{p, q \in S_{\leq \log x}}1/(\phi(\lcm(\ord_p(a)\ord_q(a))))+O((\log x)^{-100}))}{\left(\sum_{p \in S_{\leq \log x}}( 1/(\phi(\ord_p(a))+O((\log x)^{-100})))\right)^2}-1.    
\end{align}
The denominator here is
\begin{align}\label{equ22}
\left(\sum_{p\in S_{\leq \log x}}\frac{1}{\phi((p-1)/2h)}\right)^2+O(1)
\end{align}

Since $\phi((p-1)/2h)/p$ fluctuates on the primes, we cannot a priori compute this sum just based on the value of $d_{\mathbb{P}}(S)$. However, using the following lemma we can compute~\eqref{equ22} in terms of $d_{\mathbb{P}}(S)$.

\begin{lemma}\label{lem:eulerphi} Let $x\geq 2$, $k\in \mathbb{N}$ $w\geq 2$, and let $W=\prod_{p\leq w, p\nmid k}p$. Then for $\lambda\geq 1$ and $Wk\leq \sqrt{x}$, $k\leq x^{0.1}$ we have
\begin{align}\label{equ20}
|\{p\leq x:\,\, p\equiv 1\pmod k,\,\, p\equiv 2\pmod{W},\,\, \frac{p-1}{\phi(p-1)}>\lambda \frac{k}{\phi(k)}\}|\ll \lambda^{-w}\frac{\pi(x)}{\phi(kW)}.    
\end{align}
\end{lemma}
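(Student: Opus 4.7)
\emph{Proof plan for Lemma \ref{lem:eulerphi}.} The condition $p\equiv 2\pmod W$ forces $p-1$ to be coprime to $W$, so every prime divisor of $p-1$ either divides $k$ or exceeds $w$. Since $k\mid p-1$, each prime $q\mid k$ contributes $q/(q-1)$ to both $(p-1)/\phi(p-1)$ and $k/\phi(k)$, and the remaining primes dividing $p-1$ are precisely those $q\mid p-1$ with $q>w$. Hence the inequality $\frac{p-1}{\phi(p-1)}>\lambda\frac{k}{\phi(k)}$ is equivalent to $g(p-1)>\lambda$, where
\[
g(n):=\prod_{q\mid n,\;q>w}\frac{q}{q-1}.
\]

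The plan is then to apply Markov's inequality with exponent $w$: if $N$ denotes the cardinality on the left of \eqref{equ20}, then
\[
N\le\lambda^{-w}\sum_{\substack{p\le x\\ p\equiv 1\;(k),\;p\equiv 2\;(W)}}g(p-1)^{w}.
\]
Since $g(n)^w$ is multiplicative, we have the squarefree expansion $g(n)^w=\sum_{d\mid n,\;d\in\mathcal{D}}\prod_{q\mid d}\bigl((q/(q-1))^{w}-1\bigr)$, where $\mathcal D$ is the set of squarefree integers whose prime factors all exceed $w$. Swapping the order of summation reduces the moment to
\[
\sum_{d\in\mathcal D}\Bigl(\prod_{q\mid d}\bigl((q/(q-1))^{w}-1\bigr)\Bigr)\bigl|\{p\le x:p\equiv 1\pmod{kd},\;p\equiv 2\pmod{W}\}\bigr|.
\]
Since $\gcd(kd,W)=1$ the congruences combine into a single progression modulo $kdW$ by the Chinese Remainder Theorem. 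I would then invoke Brun--Titchmarsh for $kdW\le x^{0.9}$, giving $\ll x/(\phi(kdW)\log x)\ll\pi(x)/(\phi(kW)\phi(d))$; the tail $kdW>x^{0.9}$ is handled with the trivial bound $|\{p\}|\le x/(kdW)+1$, whose contribution is negligible because $d$ then has many prime factors $>w$.

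Collecting contributions, the moment is bounded by
\[
\frac{\pi(x)}{\phi(kW)}\prod_{q>w}\!\left(1+\frac{(q/(q-1))^{w}-1}{q-1}\right),
\]
and the bulk of the work lies in showing that this Euler product is $O(1)$ uniformly in $w$. For $q>w$ one has $\log(q/(q-1))\le 1/(q-1)$, whence $(q/(q-1))^w\le \exp(w/(q-1))\le e$ because $w\le q-1$, and the mean value inequality gives $(q/(q-1))^{w}-1\le\frac{w}{q-1}\cdot(q/(q-1))^{w-1}\ll w/(q-1)$. Hence each local factor is $1+O(w/(q-1)^2)$, and $\sum_{q>w}w/(q-1)^{2}\ll 1$ by comparison with $\int_{w}^{\infty}w/t^{2}\,dt=1$, so the product is indeed bounded. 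Substituting back yields $N\ll\lambda^{-w}\pi(x)/\phi(kW)$, which is exactly \eqref{equ20}.

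The main obstacle is the uniformity in $w$ of the Euler product estimate; all other steps are routine manipulations with Brun--Titchmarsh and multiplicative functions. The two inputs $kW\le\sqrt x$ and $k\le x^{0.1}$ enter only to guarantee that the range of $d$ in which Brun--Titchmarsh is sharp covers the bulk of the sum, and the tail is controlled either by a trivial counting bound or by noting that squarefree $d$ with prime factors exceeding $w$ and $d\ge\sqrt{x}/(kW)$ satisfies $\omega(d)\gg\log d/\log x$, which makes its contribution negligible.
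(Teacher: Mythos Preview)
Your proposal is correct and follows essentially the same route as the paper: both apply Markov's inequality with exponent $\ell=w$, expand the multiplicative function $\bigl(\frac{p-1}{\phi(p-1)}\cdot\frac{\phi(k)}{k}\bigr)^{w}$ as a divisor sum, use Brun--Titchmarsh for the bulk of the $d$-range, and control the resulting Euler product $\prod_{q>w}(1+h(q)/(q-1))$ via the mean-value bound $(1+1/(q-1))^{w}-1\le ew/(q-1)$. The only imprecision is that your $g(n)=\prod_{q\mid n,\,q>w}q/(q-1)$ should really carry the extra restriction $q\nmid k$ (otherwise the claimed equivalence with $(p-1)/\phi(p-1)>\lambda\,k/\phi(k)$ fails when $k$ has a prime factor exceeding $w$), but since your $g$ dominates the correct one the Markov step still goes through; the paper makes this restriction explicit by setting $h(p)=0$ for $p\mid kW$.
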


We remark that much stronger tail bounds are known in the case $k=W=1$; see~\cite{weingartner} for instance. However, for us the $W$-aspect is crucial, since in particular taking $\lambda=1+w^{-0.9}$ we can conclude that all but $\ll \exp(-cw^{0.1})$-proportion of primes $p\leq x, p\equiv 1\pmod k, p\equiv 2\pmod W$ satisfy $\phi(p-1)=(1+O(w^{-0.9}))\frac{\phi(k)}{k}p$, say.

\begin{proof}
This is a standard application of the method of moments. For any $\ell\geq 1$, the left-hand side of~\eqref{equ20} is

\begin{align}\label{equ21}
\leq \lambda^{-\ell}\sum_{\substack{p\leq x\\p\equiv 1\pmod k\\p\equiv 2\pmod W}}\left(\frac{p-1}{\phi(p-1)}\frac{\phi(k)}{k}\right)^{\ell}.   
\end{align}
For $n\equiv 1\pmod k, n\equiv 2\pmod{W}$, we have
\begin{align*}
\left(\frac{n}{\phi(n)}\frac{\phi(k)}{k}\right)^{\ell}=\prod_{\substack{p\mid n\\p\nmid kW}}\left(1-\frac{1}{p}\right)^{-\ell}:=g(n),
\end{align*}
where $g(n)$ is a multiplicative function defined by $g(p^j)=(1-\frac{1}{p})^{-\ell}$ for $p\nmid kW$ and $g(p^j)=1$ for $p\mid kW$. By M\"obius inversion, we have $g(n)=\sum_{d\mid n}h(d)$, where $h$ is a multiplicative function defined by $h(p)=g(p)-1$ for $p\nmid kW$ and $h(p^j)=0$ when $j\geq 2$ or $j=1$, $p\mid kW$. 

Now the sum over $p$ in~\eqref{equ21} becomes
\begin{align}\label{e11}
\sum_{d\leq x}h(d) \sum_{\substack{p\leq x\\ p\equiv 1 \pmod{\lcm(k,d)}\\ p\equiv 2\pmod{W}}}1.       
\end{align}

By the Brun--Titchmarsh inequality, the contribution of the terms $1\leq d\leq x^{0.4}$ to the sum is
\begin{align*}
&\ll \frac{\pi(x)}{\phi(W)}\sum_{d\leq x^{0.4}}\frac{h(d)}{\phi(\lcm(k,d))}\\
&\leq \frac{\pi(x)}{\phi(kW)}\prod_{p\nmid kW}\left(1+\frac{h(p)}{\phi(p/(k,p))}\right)\\
&\ll \frac{\pi(x)}{\phi(kW)}\prod_{p\nmid kW}\left(1+\frac{(1+1/(p-1))^{\ell}-1}{p-1}\right).
\end{align*}
By the mean value theorem for derivatives, we have $(1+x)^{\ell}\leq 1+e\ell x$ for $0\leq x\leq 1/(\ell-1)$. This together with the inequality $1+x\leq e^x$ lets us bound the previous expression by 
\begin{align*}
\frac{\pi(x)}{\phi(kW)}\exp\left(e\ell\sum_{p>w}\frac{1}{(p-1)^2}\right)\ll \frac{\pi(x)}{\phi(kW)},
\end{align*}
by the prime number theorem and the choice $\ell=w$.

The contribution to~\eqref{e11} from $x^{0.4}\leq d\leq x$, in turn, is trivially
\begin{align*}
\ll x\sum_{x^{0.4}\leq d\leq x}\frac{h(d)}{\lcm(k,d)W}\leq \frac{x^{0.97}}{kW}\sum_{x^{0.4}\leq d\leq x}\frac{h(d)}{(d/(k,d))^{0.9}},
\end{align*}
and by essentially the same computation as above this is 
\begin{align*}
\ll \frac{x^{0.97}}{kW}\exp(Cw^{0.1})\ll \frac{x^{0.98}}{kW},
\end{align*}
since $w\ll \log x$. Recalling that we have a factor of $\lambda^{-\ell}$ in~\eqref{equ21}, the proof is complete.
\end{proof}

By Lemma~\ref{lem:eulerphi}, for all but $\ll \exp(-cw^{0.1})$-proportion of $p\in S$ we have $\phi(p-1)=(1+O(w^{-0.9}))\frac{\phi(|4ab|)}{|4ab|}p$, and for any $k\geq 1$, for all but $\ll \exp(-cw^{0.1})2^{-10k}$-proportion of $p\in S$ we have $\phi(p-1)\geq 2^{-k}(1-w^{-0.9})\frac{\phi(|4ab|)}{|4ab|}p$. Thus, the expression~\eqref{equ22} takes the form 
\begin{align*}
&\left(\sum_{p\in S_{\leq \log x}}\frac{1}{(p-1)/2h\cdot \frac{\phi(|4ab|)}{|4ab|}}\right)^2+o_{w\to \infty}((\log \log x)^2)\\
&=\left(4h^2\left(\frac{|4ab|}{\phi(|4ab|)}\right)^2d_{\mathbb{P}}(S)^2+o_{w\to \infty}(1)\right)(\log \log x)^2,
\end{align*}
which is the same quantity as in Section~\ref{sec:chungBound}. 

To deal with the numerator in~\eqref{equ23}, we again apply Lemma~\ref{lem:eulerphi} to write it as
\begin{align*}
\sum_{p,q\in S_{\leq \log x}}\frac{(|4ab|/\phi(|4ab|))^2 }{\lcm((p-1)/2h,(q-1)/2h)}+o_{w\to \infty}((\log \log x)^2).    
\end{align*}
Then we split the sum according to the value of $m:=((p-1)/2h,(q-1)/2h)$. We reduce to proving~\eqref{equ7} and~\eqref{equ8} (with the difference that the $p$ sum in the definition of $\Sigma_{m,d,r}$ only goes up to $\log x$ and the $r$ sum in $\Sigma_{m,d}$ only goes over $(r,m)=1$). Estimate~\eqref{equ8} is proved precisely as in the case of Theorem~\ref{thm:main}, since the set $S$ in Section~\ref{sec:equiLog} contains the set $S$ defined in this section. Estimate~\eqref{equ7} is proved similarly as in Section~\ref{sec:gcdSmall}, since the argument there is just based on Lemmas~\ref{lem:denPos} and~\ref{lem:denInv} for which we have analogues in this case, with the modification of considering only $r$ with $(r, m) = 1$ in~\eqref{equ8.1}.
\end{proof}

\section{Necessary conditions for primality of shifted exponentials}
\label{sec:necessary}

One might be tempted to conjecture that for Question~\ref{conj1} to have a positive answer it would suffice to look at the $q=1$ case there, that is, to show that $a^n-b$ has no fixed prime divisor and that $a^n-b$ does not factor as a result of a polynomial identity. These conditions are however not sufficient, as demonstrated by the sequence $29^n-4$. For $n$ even, $29^n-4$ factors as the difference of two squares, whereas for $n$ odd we have $5\mid 29^n-4$ (even though $29^n-4$ has no fixed prime divisor and $x^n-4$ is irreducible for $n$ odd). More generally, we have the following construction that shows the necessity of the ``for every $q\geq 1$ there exist $1\leq r\leq q$'' part of Question~\ref{conj1}.
\begin{proposition} Let $p$ be a prime. Then there exist integers $a,b>1$ such that 
\begin{itemize}

\item [(i)] The sequence $a^n-b$ has no fixed prime divisor;

\item [(ii)] The polynomial $x^{h}-b\in \mathbb{Z}[x]$ is irreducible, where $h$ is the largest integer such that $a$ is an $h$th power;
\end{itemize}

but for every $1\leq r\leq p$ either of the following holds:

\begin{itemize}

\item [(iii)]  The sequence $a^{pn+r}-b$ has a fixed prime divisor;

\item [(iv)] The polynomial $a^rx^{hp}-b\in \mathbb{Z}[x]$ is reducible. 
\end{itemize}
\end{proposition}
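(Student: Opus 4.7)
The plan is to generalize the example $a=29$, $b=4$ (corresponding to $p=2$ and $q_1=5$) to an arbitrary prime $p$. One takes $b$ to be a perfect $p$th power, so that (iv) automatically handles the single residue class $r\equiv 0\pmod p$ via the difference-of-$p$th-powers identity, and supplies a distinct ``covering'' prime $q_r$ for each of the remaining $p-1$ residue classes $r\in\{1,\ldots,p-1\}$ to provide the fixed prime divisor demanded by (iii).

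More precisely, I would first choose, using Dirichlet's theorem, distinct primes $q_1,\ldots,q_{p-1}$ with $q_r\equiv 1\pmod{p^2}$. This congruence guarantees that $(\mathbb{Z}/q_r\mathbb{Z})^{\times}$ contains an element $g_r$ of order exactly $p$ and that $g_r^r$ lies in the subgroup of $p$th powers (since this subgroup has order $(q_r-1)/p$, itself divisible by $p$, and $g_r^r$ has order $p$ for $1\le r\le p-1$). Fix $d_r\in(\mathbb{Z}/q_r\mathbb{Z})^{\times}$ with $d_r^p\equiv g_r^r\pmod{q_r}$. Setting $Q=q_1\cdots q_{p-1}$ and using CRT with Dirichlet, I would pick a prime $d$ with $d\equiv d_r\pmod{q_r}$ for every $r$ and put $b:=d^p$. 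Finally I would pick a prime $a\ne d$ satisfying $a\equiv g_r\pmod{q_r}$ for every $r$ plus the extra congruences $a\not\equiv 1\pmod\ell$ for every prime $\ell\mid d^p-1$. Because $a$ is prime we have $h=1$ and $x^h-b=x-b$ is irreducible, giving (ii). For $r\in\{1,\ldots,p-1\}$ the relations $a^p\equiv g_r^p\equiv 1$ and $a^r\equiv g_r^r\equiv d_r^p\equiv b\pmod{q_r}$ show $q_r\mid a^{pn+r}-b$ for all $n$, giving (iii). For $r=p$ the factorization
\[
a^p x^{hp}-b \;=\; (ax)^p-d^p \;=\; (ax-d)\sum_{j=0}^{p-1}(ax)^{p-1-j}d^j
\]
gives (iv).

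The main obstacle is verifying (i), the absence of a fixed prime divisor of $\{a^n-b\}_{n\ge 1}$. Any such prime $q$ would force $a^n\equiv a^{n+1}\pmod q$ for all $n$, hence $q\mid a(a-1)$, and a short case split shows that either $q\mid\gcd(a,b)$ or $q\mid\gcd(a-1,b-1)$. The first gcd is $\gcd(a,d^p)=1$ since $a\ne d$ are distinct primes; the extra congruence conditions $a\not\equiv 1\pmod\ell$ for every $\ell\mid d^p-1$ are precisely what forces $\gcd(a-1,d^p-1)=1$. These additional conditions form a compatible finite system with the CRT constraints $a\equiv g_r\pmod{q_r}$ (for $\ell=q_r$, the condition $a\equiv g_r\not\equiv 1\pmod{q_r}$ is automatic since $g_r$ has order $p>1$), so Dirichlet's theorem supplies the required prime $a$, completing the construction.
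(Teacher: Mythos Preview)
Your overall strategy coincides with the paper's: take $b=d^p$ so that (iv) handles $r=p$ via the factorization $(ax)^p-d^p$, and supply covering primes $q_1,\dots,q_{p-1}\equiv 1\pmod{p^2}$ so that (iii) handles $r=1,\dots,p-1$. The observation that $q_r\equiv 1\pmod{p^2}$ makes every element of order $p$ automatically a $p$th power is exactly what the paper uses.

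There is, however, a genuine gap in your execution, at the prime $\ell=2$. You insist that both $a$ and $d$ be primes. Your Dirichlet-chosen $d$ lies in a progression modulo the odd modulus $Q=q_1\cdots q_{p-1}$, so (apart from the single possibility $d=2$, which you have no way to force) $d$ is odd; then $2\mid d^p-1$ and your ``extra congruence'' $a\not\equiv 1\pmod 2$ forces $a$ even, hence $a=2$. But nothing in your construction guarantees $2\equiv g_r\pmod{q_r}$, so the final appeal to Dirichlet for $a$ breaks down: the residue class you need is not coprime to the modulus. Put differently, with $a,d$ both odd one has $2\mid\gcd(a-1,d^p-1)$, and then $2$ \emph{is} a fixed prime divisor of $a^n-b$, violating (i).

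The repair is easy and brings you exactly in line with the paper: drop the primality of $a$ (you only used it to get $h=1$, for which ``$a$ is not a perfect power'' suffices). Choose $a$ \emph{even} by CRT, with $a\equiv g_r\pmod{q_r}$, $a\not\equiv 0\pmod d$, and $a\not\equiv 1\pmod\ell$ for every odd prime $\ell\mid d^p-1$; avoiding perfect powers is then trivial. This is precisely the paper's construction: it first chooses $a$ even and not a perfect power with $\ord_{q_i}(a)=p$, and afterwards builds $b=c^p$ subject to $(c,a)=1$ and $(c^p-1,a-1)=1$. Your reversal of the order (choosing $b$ first, then $a$) is harmless once the parity issue is handled.
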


\begin{proof}
By Dirichlet's theorem, we can pick odd, distinct primes $q_1,\ldots, q_{p-1}\equiv 1\pmod{p^2}$.  Now the congruences $x^{p}\equiv 1\pmod{q_i}$ all have a solution with $x\not \equiv 1 \pmod{q_i}$, so by the Chinese remainder theorem we can find an even $a>\max\{q_1,\ldots, q_{p-1}\}$ which is not a perfect power and which satisfies $\ord_{q_i}(a)=p$ for all $i$. Then pick $c>1$ such that $c^p\equiv a^{i}\pmod{q_i}$ for all $1\leq i\leq p-1$ (this can be done, since $x^p\equiv g^{(q_i-1)/p}$ is solvable for $g$ any primitive root $\pmod{q_i}$). Since the set of suitable $c$ forms an arithmetic progression $\pmod{q_1\cdots q_{p-1}}$ and since $(q_i,a(a-1))=1$, we may additionally require that $(c,a)=1$ and $(c^p-1,a-1)=1$. Now let $b:=c^p$. 

Conditions (i) and (ii) are now obvious, as $h=1$ and $(a,b)=(a-1,b-1)=1$. Moreover, the polynomial $a^px^{hp}-b$ factorizes as the difference of two $p$th powers, and for each $1\leq i\leq p-1$ we have $a^{pn+i}-b\equiv 0\pmod{q_i}$ for all $n\geq 0$. This means that also conditions (iii) and (iv) hold. 
\end{proof}

We remark that it seems difficult to tell whether the conditions of Question~\ref{conj1} hold for a given sequence. One case where this is difficult is the case of \emph{Sierpinski numbers}: it has been conjectured by Erd\H{o}s that if $k$ is a Sierpinski number (that is, $k\cdot 2^n+1$ is composite for all natural numbers $n$), then the smallest prime divisor of $k \cdot 2^n + 1$ is bounded (see~\cite[Conjecture 2]{ffk}). This corresponds to condition (i) in Question~\ref{conj1} for $a=2$, $b=-k$ (since for large primes $p$ the congruence $k\cdot 2^n+1\equiv 0\pmod p$ is solvable if and only if $2^n+k\equiv 0\pmod p$ is solvable). In~\cite{ffk}, Filaseta, Finch and Kozek state that it is ``highly likely'' that Erd\H{o}s's conjecture is false, as the conjecture does not take into account polynomial identities, corresponding to condition (ii) in Question~\ref{conj1}. They conjecture that Erd\H{o}s's intuition is correct for all power-free $k$. They also conjecture that the smallest prime divisor of $5 \cdot 2^n + 1$ is unbounded as $n$ varies. Our proof of Theorem~\ref{thm:almostprime} gives that $\omega(5\cdot 2^n+1)\to \infty$ along almost all natural numbers $n$, but does not rule out the smallest prime factor being bounded.

\bibliography{exponentialPrimes}

\begin{thebibliography}{10}

\bibitem{balister}
P.~Balister, B.~Bollob\'{a}s, R.~Morris, J.~Sahasrabudhe, and M.~Tiba.
\newblock On the {E}rd{\H{o}}s covering problem: the density of the uncovered
  set.
\newblock {\em Invent. Math.}, 228(1):377--414, 2022.

\bibitem{bgs}
J.~Bourgain, A.~Gamburd, and P.~Sarnak.
\newblock Affine linear sieve, expanders, and sum-product.
\newblock {\em Invent. Math.}, 179(3):559--644, 2010.

\bibitem{bgs-markoff}
J.~Bourgain, A.~Gamburd, and P.~Sarnak.
\newblock Markoff triples and strong approximation.
\newblock {\em C. R. Math. Acad. Sci. Paris}, 354(2):131--135, 2016.

\bibitem{cooke}
G.~Cooke and P.~J. Weinberger.
\newblock On the construction of division chains in algebraic number rings,
  with applications to {${\rm SL}_{2}$}.
\newblock {\em Comm. Algebra}, 3:481--524, 1975.

\bibitem{debaene}
K.~Debaene.
\newblock Explicit counting of ideals and a {B}run-{T}itchmarsh inequality for
  the {C}hebotarev density theorem.
\newblock {\em Int. J. Number Theory}, 15(5):883--905, 2019.

\bibitem{elsholtz}
C.~Elsholtz.
\newblock Density of primes in sequences of the form $a^n+b$.
\newblock MathOverflow answer.
\newblock https://mathoverflow.net/q/268957.

\bibitem{ffk}
M.~Filaseta, C.~Finch, and M.~Kozek.
\newblock On powers associated with {S}ierpiński numbers, {R}iesel numbers and
  {P}olignac's conjecture.
\newblock {\em Journal of Number Theory}, 128(7):1916 -- 1940, 2008.

\bibitem{ffkpy}
M.~Filaseta, K.~Ford, S.~Konyagin, C.~Pomerance, and G.~Yu.
\newblock Sieving by large integers and covering systems of congruences.
\newblock {\em J. Amer. Math. Soc.}, 20(2):495--517, 2007.

\bibitem{gallagher}
P.~X. Gallagher.
\newblock The number of conjugacy classes in a finite group.
\newblock In {\em Representation theory of finite groups and related topics
  ({P}roc. {S}ympos. {P}ure {M}ath., {V}ol. {XXI}, {U}niv. {W}isconsin,
  {M}adison, {W}is., 1970)}, pages 51--52, 1971.

\bibitem{garrett}
P.~Garrett.
\newblock Linear independence of roots.
\newblock
  \nolinkurl{http://www-users.math.umn.edu/~garrett/m/v/linear_indep_roots.pdf}.

\bibitem{guy}
R.~K. Guy.
\newblock {\em Unsolved problems in number theory}.
\newblock Problem Books in Mathematics. Springer-Verlag, New York, second
  edition, 1994.
\newblock Unsolved Problems in Intuitive Mathematics, I.

\bibitem{hooley}
C.~Hooley.
\newblock On {A}rtin's conjecture.
\newblock {\em J. Reine Angew. Math.}, 225:209--220, 1967.

\bibitem{hooleybook}
C.~Hooley.
\newblock {\em Applications of sieve methods to the theory of numbers}.
\newblock Cambridge University Press, Cambridge-New York-Melbourne, 1976.
\newblock Cambridge Tracts in Mathematics, No. 70.

\bibitem{iwaniec-kowalski}
H.~Iwaniec and E.~Kowalski.
\newblock {\em Analytic number theory}, volume~53 of {\em American Mathematical
  Society Colloquium Publications}.
\newblock American Mathematical Society, Providence, RI, 2004.

\bibitem{jarviniemi}
O.~J\"{a}rviniemi.
\newblock Equality of orders of a set of integers modulo a prime.
\newblock {\em Proc. Amer. Math. Soc.}, 149(9):3651--3668, 2021.

\bibitem{kalai}
G.~Kalai.
\newblock Proposals for polymath projects.
\newblock MathOverflow answer.
\newblock https://mathoverflow.net/q/229580.

\bibitem{lo}
J.~C. Lagarias and A.~M. Odlyzko.
\newblock Effective versions of the {C}hebotarev density theorem.
\newblock In {\em Algebraic number fields: {$L$}-functions and {G}alois
  properties ({P}roc. {S}ympos., {U}niv. {D}urham, {D}urham, 1975)}, pages
  409--464, 1977.

\bibitem{lenstra}
H.~W. Lenstra, Jr.
\newblock On {A}rtin's conjecture and {E}uclid's algorithm in global fields.
\newblock {\em Invent. Math.}, 42:201--224, 1977.

\bibitem{montgomery}
H.~L. Montgomery.
\newblock The pair correlation of zeros of the zeta function.
\newblock In {\em Analytic number theory ({P}roc. {S}ympos. {P}ure {M}ath.,
  {V}ol. {XXIV}, {S}t. {L}ouis {U}niv., {S}t. {L}ouis, {M}o., 1972)}, pages
  181--193, 1973.

\bibitem{moree-stevenhagen}
P.~Moree and P.~Stevenhagen.
\newblock A two-variable {A}rtin conjecture.
\newblock {\em J. Number Theory}, 85(2):291--304, 2000.

\bibitem{murty-wong}
M.~R. Murty, V.~K. Murty, and P.-J. Wong.
\newblock The {C}hebotarev density theorem and the pair correlation conjecture.
\newblock {\em J. Ramanujan Math. Soc.}, 33(4):399--426, 2018.

\bibitem{murty-perelli}
M.~R. Murty and A.~Perelli.
\newblock {The pair correlation of zeros of functions in the Selberg class}.
\newblock {\em International Mathematics Research Notices}, 1999(10):531--545,
  01 1999.

\bibitem{mss}
M.~R. Murty, F.~S\'{e}guin, and C.~L. Stewart.
\newblock A lower bound for the two-variable {A}rtin conjecture and prime
  divisors of recurrence sequences.
\newblock {\em J. Number Theory}, 194:8--29, 2019.

\bibitem{neukirch}
J.~Neukirch.
\newblock {\em Algebraic number theory}, volume 322 of {\em Grundlehren der
  Mathematischen Wissenschaften [Fundamental Principles of Mathematical
  Sciences]}.
\newblock Springer-Verlag, Berlin, 1999.
\newblock Translated from the 1992 German original and with a note by Norbert
  Schappacher, With a foreword by G. Harder.

\bibitem{odlyzko}
A.~M. Odlyzko.
\newblock On the distribution of spacings between zeros of the zeta function.
\newblock {\em Math. Comp.}, 48(177):273--308, 1987.

\bibitem{rieger}
G.~J. Rieger.
\newblock \"{U}ber {P}rimzahlen und d\"{u}nne {F}olgen.
\newblock {\em Arch. Math. (Basel)}, 28(6):600--602, 1977.

\bibitem{zsigmondyprimes}
M.~Roitman.
\newblock On {Z}sigmondy primes.
\newblock {\em Proc. Amer. Math. Soc.}, 125(7):1913--1919, 1997.

\bibitem{rudnick-sarnak}
Z.~Rudnick and P.~Sarnak.
\newblock Zeros of principal {$L$}-functions and random matrix theory.
\newblock {\em Duke Math. J.}, 81(2):269--322, 1996.
\newblock A celebration of John F. Nash, Jr.

\bibitem{salehi-sarnak}
A.~Salehi~Golsefidy and P.~Sarnak.
\newblock The affine sieve.
\newblock {\em J. Amer. Math. Soc.}, 26(4):1085--1105, 2013.

\bibitem{sarnak}
P.~Sarnak.
\newblock Slide presentation.
\newblock http://geometrie.math.cnrs.fr/Sarnak.pdf.

\bibitem{serre}
J.-P. Serre.
\newblock Quelques applications du th\'{e}or\`eme de densit\'{e} de
  {C}hebotarev.
\newblock {\em Inst. Hautes \'{E}tudes Sci. Publ. Math.}, (54):323--401, 1981.

\bibitem{thorner}
J.~Thorner and A.~Zaman.
\newblock {A Chebotarev Variant of the Brun–Titchmarsh Theorem and Bounds for
  the Lang-Trotter conjectures}.
\newblock {\em International Mathematics Research Notices},
  2018(16):4991--5027, 03 2017.

\bibitem{weingartner}
A.~Weingartner.
\newblock The distribution functions of {$\sigma(n)/n$} and {$n/\phi(n)$}.
\newblock {\em Proc. Amer. Math. Soc.}, 135(9):2677--2681, 2007.

\bibitem{wong}
P.-J. Wong.
\newblock Applications of group theory to conjectures of {A}rtin and
  {L}anglands.
\newblock {\em Int. J. Number Theory}, 14(3):881--898, 2018.

\end{thebibliography}
\bibliographystyle{plain}

\end{document}